\newcommand{\uD}{\underline{D}}
\newcommand{\bb}{\mathfrak}
\newcommand{\measv}{\mathcal{L}^{d+1}}
\newcommand{\measb}{\mathcal{L}^{d}}
\newcommand{\intOO}{\int_{\Omega}}
\newcommand{\intGG}{\int_{\Gamma}}
\title[Bulk-surface systems on evolving domains]{Bulk-surface systems on evolving domains}
\author{Diogo Caetano, Charles M. Elliott, Bao Quoc Tang}
\address{Diogo Caetano \hfill\break
	Mathematics Institute, University of Warwick, Coventry CV4 7AL, United Kingdom}
\email{Diogo.Caetano@warwick.ac.uk}
\address{Charles Elliott \hfill\break
	Mathematics Institute, University of Warwick, Coventry, CV4 7AL, United Kingdom}
\email{C.M.Elliott@warwick.ac.uk}
\address{Bao Quoc Tang \hfill\break
	Institute of Mathematics and Scientific Computing, University of Graz, Heinrichstrasse 36, 8010 Graz, Austria}
\email{quoc.tang@uni-graz.at} 
\newtheorem{theorem}{Theorem}[section]
\newtheorem{lemma}[theorem]{Lemma}
\newtheorem{proposition}[theorem]{Proposition}	
\newtheorem{corollary}[theorem]{Corollary}
\newtheorem{remark}[theorem]{Remark}
\newcommand{\ou}{\overline{u}}
\newcommand{\ow}{\overline{w}}
\newcommand{\oz}{\overline{z}}	
\newcommand{\R}{\mathbb R}
\newcommand{\dO}{\delta_\Omega}
\newcommand{\dG}{\delta_\Gamma}
\newcommand{\dGG}{\delta_{\Gamma}'}
\newcommand{\GG}{\Gamma}
\newcommand{\GGG}{\mathcal G}
\renewcommand{\O}{\Omega}
\newcommand{\pa}{\partial}
\newcommand{\VV}{\mathbf{V}}
\newcommand{\na}{\nabla}
\newcommand{\naG}{\nabla_{\GG}}
\newcommand{\J}{\mathbf{J}}
\newcommand{\dV}{\mathrm{d}x\mathrm{ }}
\newcommand{\dx}{\mathrm{d}x}
\newcommand{\dt}{\mathrm{d}t}
\newcommand{\intG}{\int_{\GG(t)}}
\newcommand{\uinf}{u_{\infty}}
\newcommand{\winf}{w_{\infty}}
\newcommand{\zinf}{z_{\infty}}
\newcommand{\intO}{\int_{\Omega(t)}}
\newcommand{\intT}{\int_0^T}
\newcommand{\wt}{\widetilde}
\newcommand{\wh}{\widehat}
\numberwithin{equation}{section}
\begin{document}
	\subjclass[2020]{35A01, 35K57, 35K58, 35Q92, 35R01, 35R37}
	\keywords{Volume-surface reaction-diffusion systems; Global solutions; Evolving domains; Convergence to equilibrium; Entropy method}
	\begin{abstract}		
		Bulk-surface systems in evolving domains are studied. Such problems appear typically from modelling receptor-ligand dynamics in biological cells. Our first main result is the global existence and boundedness of solutions in all dimensions. This is achieved by proving $L^p$-maximal regularity of parabolic equations and duality methods in moving surfaces, which are of independent interest. The second main result is the large time dynamics where we show, under the assumption that the volume/area of the moving domain/surface is unchanged and that the material velocities are decaying for large time, that the solution converges to a unique spatially homogeneous equilibrium. The result is proved by extending the entropy method to bulk-surface systems in evolving domains.
	\end{abstract}
	
	\maketitle
	\tableofcontents
	
	\section{Introduction}
	
	\subsection{Ligand-receptor systems in evolving domains}
	For each $t\in [0,\infty)$, let $D(t) \in \R^{d+1}$ be a Lipschitz domain containing a $C^2$-hypersurface $\GG(t)$ which separates $D(t) = I(t)\cup \O(t)$ into an interior region $I(t)$ and an exterior (Lipschitz) domain $\O(t)$.  The points in  $\GG(t)$  and $\Omega(t)$ are subject to velocity fields   $\VV_\GG$  and $\VV_{\Omega}$ respectively and we denote $\Omega_0:=\Omega(0)$ and $\Gamma_0:=\Gamma(0)$.
	Denoting by $\nu(t)$ the outward pointing unit normal to $\partial \Omega(t)$, we suppose that the surface $\GG(t)$ and the outer boundary $\pa D(t)$ both evolve in time with normal kinematic velocity fields  
	\begin{equation}\label{eq_V_V0}
	\mathbf V=(\VV_\GG|_{\GG}\cdot \nu(t))\nu(t) \quad\text{ and }\quad \mathbf V_0, \quad \text{ respectively}. 
	\end{equation}
	
	We assume that these moving domains are parameterised by a flow $\Phi: [0,T]\times \R^{d+1}\to \R^{d+1}$ satisfying
	\begin{enumerate}[label=($\Phi$\theenumi), ref=$\Phi$\theenumi]
		\item\label{Phi1} $\Phi_t:= \Phi(t,\cdot): \overline{\O_0} \to \overline{\O(t)}$ is a $C^2$-diffeomorphism with $\Phi_t(\GG_0) = \GG(t)$.
		\item\label{Phi2} $\Phi_t$ solves the ODE
		\begin{equation*}
		\begin{gathered}
		\dfrac{d}{dt}\Phi_t(\cdot) = \mathbf{V}_p(t,\Phi_t(\cdot)),\\
		\Phi_0 = \mathrm{Id},
		\end{gathered}
		\end{equation*}
		in which $\VV_p: [0,T]\times \R^{d+1}\to \R^{d+1}$ is a continuously differentiable velocity and satisfies
		\begin{equation}\label{eq_Vp}
		(\VV_p|_{\GG}\cdot \nu(t))\nu(t) = \VV \qquad \text{ and } \qquad (\VV_p|_{\pa D}\cdot \nu(t))\nu(t) = \VV_0.
		\end{equation}
	\end{enumerate}
	
	The classical parametric (or strong) material derivatives on $\O(t)$ and $\GG(t)$ are defined as follows
	\begin{equation*}
	\pa_{\O}^{\bullet} u(t) = \pa_t u(t) + \nabla u(t)\cdot \VV_p(t) \quad \text{ and } \quad \pa_{\GG}^{\bullet} v(t) = \pa_t^\circ v(t) + \nabla_{\GG(t)}v(t)\cdot \VV_p(t)
	\end{equation*}
	where $\pa_t^\circ v$ is the {\it normal} time derivative on $\Gamma(t)$, that is, the rate of change on trajectories moving in the normal direction on $\Gamma(t)$.
	For convenience, we will simplify notation and denote by $\dot{u}$ and $\dot{v}$ the parametric material derivatives,
	\begin{equation*}
	\dot u(t) = \partial_{\Omega}^{\bullet} u(t), \quad \dot v(t) = \partial_{\Gamma}^{\bullet} v(t),
	\end{equation*}
	where the corresponding domains should be clear. We set
	\begin{equation*}
	\J_\O:= \VV_\O  - \VV_p, \quad \J_\GG:= \VV_\GG - \VV_p, \quad \text{ and } \quad j:= (\VV_\O - \VV_\GG)\cdot \nu(t) = \J_\O|_{\GG}\cdot \nu(t).
	\end{equation*}
	The study of this paper has its primary application to the following system
	\begin{equation}\label{sys}
	\left\{
	\begin{aligned}
	\dot{u} + u\na\cdot \VV_p + \na\cdot (\J_\O u)  - \dO \Delta u&= 0, &&\text{ in } \O(t),\\
	\dO \na u \cdot \nu - uj &= r(u,w,z), && \text{ on } \GG(t),\\
	\dot{w} + w\naG\cdot \VV_p + \naG \cdot (\J_\GG w)  - \dG \Delta_\GG w&= r(u,w,z), &&\text{ on } \GG(t),\\
	\dot{z} + z\naG \cdot \VV_p + \naG \cdot (\J_\GG z)  - \dGG \Delta z&= -r(u,w,z), &&\text{ on } \Gamma(t),\\
	u(x,0) &= u_0(x), &&\text{ for } x\in\O_0,\\
	z(x,0) = z_0(x), \; w(x,0) &= w_0(x) , &&\text{ for } x\in \GG_0.
	\end{aligned}
	\right.
	\end{equation}
	The diffusion coefficients $\delta_\Omega, \delta_\Gamma, \delta_{\Gamma'}$ are assumed to be positive. The initial data are nonnegative and bounded.
	For simplicity we omit the dependence on $t$ and write $\nu$, $\naG$ and $\Delta_\GG$ instead of outward normal vector $\nu(t)$, the tangential gradient $\nabla_{\GG(t)}$ and the Laplace-Beltrami operator $\Delta_{\GG(t)}$, which are all time-dependent. In each reaction diffusion equation the operator arises from conservation with a diffusive and advective flux with the advection velocity being given by the velocities  $\VV_{\Omega}$ and $\VV_\GG$, to which we therefore refer as material velocities. The reaction term $r(u,w,z)$ is defined through mass exchange with an application of the mass action law, more precisely
	\begin{equation*}
	r(u,w,z) = \frac{1}{\delta_K'}z - \frac{1}{\delta_K}uw.
	\end{equation*}
	The study of system \eqref{sys} is motivated from a
	biological application to receptor-ligand dynamics where $u$ represents the concentrations of ligands in $\Omega$, which acts as the extracellular volume, whilst $w$ and $z$ are the concentrations of surface receptors and of ligand-receptor complexes formed by the binding of $u$ with $w$ on $\Gamma$ (see more details in \cite[Section 1.3]{alphonse2018coupled}). Formally, solutions to \eqref{sys} satisfy the conservation laws
	\begin{equation}\label{law1}
	\intO u(x,t)  + \intG z(x,t)  = M_1:= \int_{\O_0} u_0(x)  + \int_{\GG_0} z_0(x) 
	\end{equation}
	and
	\begin{equation}\label{law2}
	\intG w(x,t)  + \intG z(x,t)  = M_2:= \int_{\GG_0} w_0(x)  + \int_{\GG_0} z_0(x) .
	\end{equation}
	
	\medskip
	This paper shows the global existence of a bounded solution \textit{in all dimensions} of the system \eqref{sys} and its convergence to equilibrium. In the course of doing that, we prove the $L^p$-maximal regularity and duality estimates for problems on moving surfaces, which are of independent interest.
	
	\subsection{State of the art}
	Problem \eqref{sys} is usually called a bulk-surface reaction-diffusion system on moving domains. On the one hand, the study of equations on time-dependent domains goes back to the seventies to the work of Fujita \cite{fujita1970existence}. Since then, such problems have been investigated extensively due to their applications in mechanics, biology, etc. The traditional approach is to transform the system into one on a fixed reference domain with time-dependent coefficients and investigate it using classical tools from partial differential equations. Due to this, the study is usually problem-dependent. Another approach is to treat the system directly by developing parallel analytical mechanisms on the moving domains, see e.g. \cite{alphonse2015abstract,meier2008note,vierling2014parabolic}. The latter turns out to be useful as it allows to investigate classes of problems thanks to the unified framework. On the other hand, problems involving the coupling between a volume and its boundary have attracted a lot of attention recently due to its appearance in modelling systems from asymmetric stem cell division \cite{fellner2016quasi,fellner2018well}, in modelling receptor-ligand dynamics in cell biology \cite{garcia2014mathematical,alphonse2018coupled}, in crystal growth \cite{kwon2001modeling,yeckel2005computer}, in population modelling \cite{berestycki2013influence,berestycki2015effect}, in chemistry with fast sorption \cite{souvcek2019continuum,augner2020analysis}, or in fluid mechanics \cite{mielke2013thermomechanical,glitzky2013gradient}, and so on. The bulk-surface coupling characteristic makes the study of such problems challenging, see e.g. \cite{fellner2018well,disser2020global,sharma2016global,sharma2017uniform,hausberg2018well}.
	
	\medskip
	The study of bulk-surface systems on moving domains is much less extensive comparing to its counterpart in fixed domains. Recent related works include \cite{alphonse2018coupled,elliott2017coupled,caetano2021cahn,bogosel2021propagation,hansbo2016cut}. In particular, in \cite{alphonse2018coupled} the second author and collaborators  studied the system \eqref{sys}
	under the restriction $d \leq 3$ and showed that \eqref{sys} has a nonnegative weak solution $(u, w, z)$ with 
	\begin{equation*}
	\|w\|_{L^{\infty}_{L^\infty(\GG)}} \leq C(T).
	\end{equation*}
	If the dimension is further restrictive $d\leq 2$, then one has also the bounds on $z$ and $u$, i.e.
	\begin{equation*}
	\|z\|_{L^{\infty}_{L^\infty(\GG)}} \leq C(T) \quad \text{ and } \|u\|_{L^{\infty}_{L^\infty(\O)}} \leq C(T).
	\end{equation*}
	(See the definition of function spaces in the next subsection). Their proof utilises the De Giorgi method, which leads  to the restriction on the dimensions. Due to the technique used therein, the question of boundedness in higher dimensions remains open even in the non-moving case. Concerning the dynamics of \eqref{sys}, it was shown in \cite{alphonse2018coupled} \textit{in fixed domains, i.e. $\Omega(t) \equiv \Omega_0$, $\Gamma(t) \equiv \Gamma_0$, for all $t\geq 0$}, that the solution converges exponentially to equilibrium.
	
	\subsection{Main results}
	The \textbf{\textit{first main result}} of the current paper is to prove that the solution to \eqref{sys} is bounded in any dimension, hence removing the restriction on the dimensions required in \cite{alphonse2018coupled}. Our main tools are $L^p$-maximal regularity and duality methods in evolving surfaces, which are of independent interest. These techniques are developed and used by many other authors in the case of cylindrical domains (see e.g. \cite{denk2003mathcal,CDF14,pierre2010global} and references therein). As the \textbf{\textit{second main result}}, we prove the convergence to equilibrium for the system \eqref{sys} {\it when the material velocities $\VV_\Omega$ and $\VV_\Gamma$ decay as $t\to\infty$}. This relaxes the fixed domain assumption in \cite{alphonse2018coupled} and provides, to the best of our knowledge, the first convergence to equilibrium for systems in moving domains. In order to do that, we refine the so-called entropy method to apply it to our situation.
	
	\medskip
	
	For the global existence of bounded solutions, we consider a more general system which contains \eqref{sys} as a special case. More precisely, we consider the following general system 
	\begin{equation}\label{sys_extend}
	\left\{
	\begin{aligned}
	\dot{u} + u\na\cdot \VV_p + \na\cdot (\J_\O u)  - \dO \Delta u&= 0, &&\text{ in } \O(t),\\
	\dO \na u \cdot \nu - uj &= f_1(u,w,z), && \text{ on } \GG(t),\\
	\dot{w} + w\naG\cdot \VV_p+ \naG \cdot (\J_\GG w)  - \dG \Delta_\GG w &= f_2(u,w,z), &&\text{ on } \GG(t),\\
	\dot{z} + z\naG \cdot \VV_p + \naG \cdot (\J_\GG z) - \dGG \Delta_\GG z &= f_3(u,w,z), &&\text{ on } \Gamma(t),\\
	u(x,0) &= u_0(x), &&\text{ for } x\in\O_0,\\
	z(x,0) = z_0(x), w(x,0) &= w_0(x) , &&\text{ for } x\in \GG_0.
	\end{aligned}
	\right.
	\end{equation}
	The diffusion coefficients $\dO,\dG,\dGG$ are positive constants. The nonlinearities $f_1, f_2, f_3: \mathbb R^3 \to \mathbb R$ are locally Lipschitz continuous, and satisfy for all $(u,w,z)\in [0,\infty)^3$
	\begin{enumerate}[label=(A\theenumi),ref=A\theenumi] 
		\item\label{A1} $f_1(0,w,z) \geq 0$, $f_2(u,0,z) \geq 0$ and $f_3(u,w,0) \geq 0$;
		\item\label{A2} $f_2(u,w,z) + f_3(u,w,z) \leq 0$;
		\item\label{A3} $f_1(u,w,z) + f_2(u,w,z) \leq 0$ or $f_1(u,w,z) + f_3(u,w,z) \leq 0$;
		\item\label{A4_1} $f_1(u,w,z) \leq C(w^{\alpha} + z^{\alpha} + 1)$ for some $\alpha > 0$;
		\item\label{A4} $f_2(u,w,z) \leq C(w^{\beta}+z^{\beta}+1)$ or $f_3(u,w,z) \leq C(w^{\beta} + z^{\beta}+1)$ for some $\beta < \frac{d+4}{d+2}$; and
		\item\label{A5} $f_2, f_3$ have at most polynomial growth, i.e. there exists a three variables polynomial $P(a,b,c)$ such that $|f_2(a,b,c)|,  |f_3(a,b,c)| \leq P(a,b,c)$ for all $(a,b,c) \in [0,\infty)^3$.
	\end{enumerate}
	It is straightforward to verify that the nonlinearities in system \eqref{sys} satisfy the conditions \eqref{A1}--\eqref{A5} with $\alpha = 2$ and $\beta = 1$. The condition \eqref{A1} is usually called \textit{quasi-poisitivity} of the nonlinearities. It is essential in many modelling problems since it ensures the preservation of nonnegativity, which must be held for densities, concentrations, or populations. The assumptions \eqref{A2} and \eqref{A3} are referred to as \textit{mass dissipation}, which implies that some mass totals do not increase in time. \eqref{A4_1} and \eqref{A5} are quite general as no restrictions are imposed on $\alpha$ or the order of the polynomial $P$. The only technical condition we impose is \eqref{A4} requiring either $f_2$ or $f_3$ have upper bounds with \textit{sub-critical} growth rate.
	
	\medskip
	Before going into further discussion, we introduce some notation. For $p, q\geq 1$, we denote by
	\begin{equation*}
	L^p_{L^q(\Omega)}:= \left\{ u: [0,T]\to \bigcup_{t\in [0,T]}L^q(\O(t))\times \{t\}, \; t \mapsto (\hat{u}(t),t) \; \text{s.t.} \; \phi_{-(\cdot)}\hat{u}(\cdot) \in L^{p}(0,T;L^q(\O_0)) \right\}.
	\end{equation*}
	Here the mapping $\phi_{-t}$ is defined by $\phi_{-t}u:= u\circ \Phi_t$ which defines a linear isomorphism between $L^p(\O(t))$ and $L^p(\O_0)$. The spaces $L^p_{L^q(\Gamma)}$, $L^p_{W^{1,q}(\Omega)}$ and $L^p_{W^{1,q}(\GG)}$ for $p,q \geq 1$
	are defined similarly. The spaces $H^1_{H^1(\Omega)^*}$ and $H^1_{L^2(\Omega)}$ are defined as 
	\begin{equation*}
	H^1_{H^1(\Omega)^*} = \{u\in L^1_{H^1(\Omega)} \,|\, \dot{u} \in L^2_{H^1(\Omega)^*} \}, \quad H^1_{L^2(\Omega)} = \{u\in L^2_{L^2(\Omega)}\,|\, \dot{u} \in L^2_{L^2(\Omega)} \},
	\end{equation*}
	where the weak material derivative $\dot{u} \in L^2_{H^1(\Omega)^*}$ is understood as
	\begin{equation*}
	\int_{0}^T\langle \dot u(t), \eta(t)\rangle_{H^1(\Omega(t))^*, H^1(\Omega(t))} = -\int_0^T\intO u(t)\dot{\eta}(t) - \int_0^T\intO u(t)\eta(t)\na\cdot \VV_p.
	\end{equation*}
	The space $H^1_{L^2(\Gamma)}$ is defined similarly. For more properties of these function spaces, the interested reader is referred to \cite{alphonse2021function}.
	%
	
	\medskip
	The first main result of this paper is the following.
	\begin{theorem}[Global existence and boundedness of solutions]\label{thm:main}
		Assume \eqref{A1}--\eqref{A5} and \eqref{Phi1}--\eqref{Phi2}. Then for any nonnegative initial data $(u_0,w_0,z_0) \in L^\infty(\O_0)\times L^\infty(\GG_0)\times L^\infty(\GG_0)$, there exists a unique bounded nonnegative weak solution $(u,w,z)$ to \eqref{sys_extend} with 
		\begin{equation*}
		\|u\|_{L^{\infty}_{L^\infty(\O)}}, \|w\|_{L^{\infty}_{L^\infty(\GG)}}, \|z\|_{L^{\infty}_{L^\infty(\GG)}} \leq C(T).
		\end{equation*}
		By a weak solution we mean a triple $(u,w,z)\in H^1_{H^1(\Omega)^*}\cap L^2_{H^1(\Omega)}\times (H^1_{L^2(\Gamma)}\cap L^2_{H^1(\Gamma)})^2$ with $w\in L^{\infty}_{L^{\infty}(\Gamma)}$ satisfying for all test functions $(\eta, \psi, \xi)\in L^2_{H^1(\Omega)}\times (L^2_{H^1(\Gamma)})^2$,
		\begin{equation*}
		\begin{aligned}
		\langle \dot u, \eta \rangle + \intO u\eta \na\cdot \VV_p + \delta_{\Omega}\na u\cdot \na \eta + \intO\na\cdot(\J_{\Omega}u)\eta= \intG f_1(u,w,z)\eta + \intG ju\eta,\\
		\langle \dot w, \psi\rangle + \intG w\psi \naG\cdot \VV_p + \delta_\Gamma\intG \naG w\cdot \naG \psi + \intG \naG\cdot(\J_\Gamma w)\psi= \intG f_2(u,w,z)\psi,\\
		\langle \dot z, \xi \rangle + \intG z\xi \naG \VV_p+ \delta_{\Gamma'}\intG \naG z \cdot \naG \xi + \intG \naG\cdot(\J_\Gamma z)\xi=  \intG f_3(u,w,z)\xi,
		\end{aligned}
		\end{equation*}
		for almost every $t\in [0,T]$.
	\end{theorem}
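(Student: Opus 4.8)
The plan is to follow the classical program for reaction-diffusion systems --- local well-posedness, nonnegativity, an a priori $L^1$ bound from the mass structure, and then a bootstrap to $L^\infty$ --- but carrying every step out intrinsically on the evolving domains. First I would pull the problem back by the flow $\Phi_t$ to the fixed reference pair $(\O_0,\GG_0)$: by \eqref{Phi1}--\eqref{Phi2} and the $C^1$-regularity of $\VV_p$, this turns \eqref{sys_extend} into a coupled bulk-surface parabolic system on the cylinders $\O_0\times(0,T)$ and $\GG_0\times(0,T)$ with smooth, bounded, time-dependent coefficients, the parametric material derivatives becoming ordinary time derivatives and the advection terms $\na\cdot(\J_\O u)$, $\naG\cdot(\J_\GG w)$, $\naG\cdot(\J_\GG z)$ becoming first-order perturbations. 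On this fixed configuration, $L^p$-maximal regularity for the decoupled linear bulk and surface operators (with the inhomogeneous flux on $\GG_0$ handled in the standard way) together with the local Lipschitz continuity of $f_1,f_2,f_3$ gives, by a contraction argument in parabolic Sobolev spaces, a unique local-in-time strong solution on a maximal interval $[0,T_{\max})$; transporting back yields a local weak solution in the stated class, and uniqueness on any interval follows by testing the difference of two solutions and using Gronwall's inequality.

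\emph{Nonnegativity and $L^1$ bounds.} Nonnegativity follows by first replacing each $f_i(u,w,z)$ with $f_i(u^+,w^+,z^+)$ and testing the $u$-, $w$- and $z$-equations with $-u^-$, $-w^-$ and $-z^-$: Reynolds' transport theorem absorbs the terms $u\,\na\cdot\VV_p$ and its analogues, the advection terms are integrated by parts, and the quasi-positivity \eqref{A1} makes the boundary and source contributions have the favourable sign (for instance $f_1(0,w^+,z^+)\ge0$ handles the $\GG(t)$-term in the $u$-estimate), leaving a Gronwall inequality for $\intO(u^-)^2+\intG\big((w^-)^2+(z^-)^2\big)$ with vanishing initial value; hence $u,w,z\ge0$ and the truncation was vacuous. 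Testing next with $\eta=\psi=\xi=1$, and noting that the $\GG(t)$-boundary term created by $\na\cdot(\J_\O u)$ cancels the explicit $\intG ju$, gives $\tfrac{d}{dt}\intO u=\intG f_1$, $\tfrac{d}{dt}\intG w=\intG f_2$ and $\tfrac{d}{dt}\intG z=\intG f_3$; the mass-dissipation assumptions \eqref{A2} and \eqref{A3} then yield $\tfrac{d}{dt}\intG(w+z)\le0$ and $\tfrac{d}{dt}\big(\intO u+\intG w\big)\le0$ (or the analogue with $z$), so by nonnegativity $u\in L^\infty_{L^1(\O)}$ and $w,z\in L^\infty_{L^1(\GG)}$ with constants depending only on the data.

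\emph{Duality, bootstrap, and global existence.} The core step upgrades these $L^1$ bounds. For the surface pair set $M=w+z$, $\mathcal S=\dG w+\dGG z$; adding the $w$- and $z$-equations and using \eqref{A2} gives, in the weak moving-surface sense, $\dot M+M\naG\cdot\VV_p+\naG\cdot(\J_\GG M)-\Delta_\GG\mathcal S\le0$ with $\mathcal S/M$ lying between $\dG$ and $\dGG$; pairing this against the solution of a backward-in-time dual parabolic problem on $\GG(t)$ with nonnegative right-hand side and controlling that dual solution by the moving-surface $L^p$-maximal regularity (which also absorbs the transport terms) upgrades $M\in L^\infty_{L^1(\GG)}$ to $w,z\in L^2_{L^2(\GG)}$, and the analogous computation for the bulk pair coming from \eqref{A3}, together with the trace theorem, controls $u$ in $L^2_{L^2(\O)}$ and $u|_\GG$ in $L^2_{L^2(\GG)}$ by the data. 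Assume now \eqref{A4} holds for $f_2$, so $f_2\le C(w^\beta+z^\beta+1)$ with $\beta<\tfrac{d+4}{d+2}$: the right-hand side of the $w$-equation then lies in $L^{p_0}_{L^{p_0}(\GG)}$ for some $p_0>1$, and moving-surface $L^p$-maximal regularity places $w$ in a parabolic Sobolev space on $\GG(t)$ embedding into $L^{p_1}_{L^{p_1}(\GG)}$ with $p_1>p_0$ --- the subcriticality being precisely what makes the parabolic Sobolev gain $\tfrac2{d+2}$ beat the loss of the power $\beta$, provided the integrability of $z$ extracted from the $M$-equation is fed in at each stage so that the exponents strictly increase. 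Iterating finitely many times gives $w\in L^\infty_{L^\infty(\GG)}$; a further round of the same estimates, now using the boundedness of $w$ together with \eqref{A2}--\eqref{A3} and the polynomial growth \eqref{A5} (so that the $w$-equation forces $f_2\in L^p_{L^p(\GG)}$ for large $p$, hence via $f_3\le-f_2$ also an $L^p$ upper bound for $f_3$), yields $z\in L^\infty_{L^\infty(\GG)}$; then $f_1\le C(w^\alpha+z^\alpha+1)$ is bounded, and moving-domain $L^p$-maximal regularity for the $u$-equation with the now-bounded Robin-type flux $\dO\na u\cdot\nu-uj=f_1$ on $\GG(t)$ gives $u\in L^\infty_{L^\infty(\O)}$. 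Since all three bounds are finite on every $[0,T]$, no blow-up occurs and $T_{\max}=\infty$.

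\emph{Main obstacle.} The difficulty is concentrated in the two analytic ingredients and in making the iteration close: one must develop $L^p$-maximal regularity and the duality estimate on genuinely evolving surfaces, where the time-dependent metric, the moving-surface Bochner-type function spaces, and the advection terms $\naG\cdot(\J_\GG\cdot)$ all intervene; and one must organise the bootstrap so that the threshold $\beta<\tfrac{d+4}{d+2}$ is exactly sufficient, which requires the component that is not a priori subcritical (here $z$) to gain integrability along the iteration through the mass-dissipation structure rather than from its own equation, while keeping the bulk-surface coupling --- the trace terms and the flux boundary condition --- under uniform control throughout.
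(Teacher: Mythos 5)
Your overall architecture coincides with the paper's: nonnegativity from \eqref{A1}, $L^\infty_{L^1}$ bounds from the mass structure, an $L^2_{L^2(\GG)}$ duality estimate for $w+z$, a bootstrap driven by the subcriticality $\beta<\tfrac{d+4}{d+2}$, and finally $L^\infty$ bounds. However, there are two genuine gaps in the way you close the argument.

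First, at several points you apply $L^p$-maximal regularity to the $w$-equation (and later to the $u$-equation) on the strength of a \emph{one-sided} bound such as $f_2\le C(w^\beta+z^\beta+1)\in L^{p_0}$. Maximal regularity requires the actual right-hand side in $L^{p_0}$, and $f_2(u,w,z)$ has no such two-sided control at that stage: by \eqref{A5} its absolute value is only dominated by a polynomial in $u,w,z$, and $u|_\Gamma$ is not yet integrable to high order. The fix is the one the paper uses: introduce the supersolution $\overline w$ solving the equation with right-hand side \emph{equal to} the upper bound (which genuinely lies in $L^{p_0}$), apply the regularity theory to $\overline w$, and conclude $0\le w\le\overline w$ by a weak comparison principle on the evolving surface (Lemmas \ref{comparison-surface} and \ref{comparison-Neumann}). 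Without the comparison step the bootstrap does not start.

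Second, and more seriously, your route to $z\in L^\infty_{L^\infty(\GG)}$ is circular. You claim that once $w$ is bounded, ``the $w$-equation forces $f_2\in L^p_{L^p(\GG)}$ for large $p$,'' and then use $f_3\le -f_2$ to bound $f_3$ above. But reading $f_2$ off from the $w$-equation requires $\dot w$ and $\Delta_\GG w$ in $L^p$, i.e.\ full $W^{2,1}_p$ regularity of $w$, which is exactly what you cannot have without already knowing $f_2\in L^p$; the comparison argument gives only $0\le w\le\overline w$ and says nothing about $\Delta_\GG w$. Equivalently, the upper bound on $f_3$ needs a \emph{lower} bound on $f_2$ in $L^p$, which via \eqref{A5} needs $u|_\Gamma\in L^r$ for large $r$ --- unavailable in your ordering, where $u\in L^\infty$ comes last (your claimed $L^2$ trace bound on $u|_\Gamma$ from a ``bulk duality'' is far from sufficient against a polynomial of arbitrary degree). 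The paper's ordering resolves this: since the flux bound \eqref{A4_1} involves only $w$ and $z$, one first obtains $u\in L^\infty_{L^\infty(\O)}$ and $u\in L^2_{H^1(\O)}$, deduces $u|_\Gamma\in L^r_{L^r(\GG)}$ for every $r$ by the trace and the interior $L^\infty$ bound, and only then invokes \eqref{A5} to place $f_3(u,w,z)$ in $L^r_{L^r(\GG)}$ and conclude $z\in L^\infty_{L^\infty(\GG)}$ by a Moser-type iteration. Reordering your final steps accordingly, and inserting the comparison principle wherever a one-sided source bound is used, would bring the proposal in line with a correct proof.
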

	
	
	As \eqref{sys} is just a special case of \eqref{sys_extend}, we have
	\begin{corollary}[Global boundedness of solutions to \eqref{sys}]\label{cor:sys}
		Assume \eqref{Phi1}--\eqref{Phi2}. Then, for any nonnegative $(u_0, w_0, z_0) \in L^{\infty}(\Omega_0)\times L^\infty(\GG_0)\times L^\infty(\GG_0)$, there exists a unique global bounded nonnegative weak solution $(u,w,z)$ to \eqref{sys} with
		\begin{equation*}
		\|u\|_{L^\infty_{L^\infty(\O)}}, \|w\|_{L^\infty_{L^\infty(\GG)}}, \|z\|_{L^\infty_{L^\infty(\GG)}} \leq C(T),
		\end{equation*}
		which satisfy the conservation laws \eqref{law1} and \eqref{law2}.
	\end{corollary}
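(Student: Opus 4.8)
The strategy is to realise \eqref{sys} as a special case of the general system \eqref{sys_extend} and invoke Theorem~\ref{thm:main}, and then to recover the conservation laws \eqref{law1}--\eqref{law2} from the weak formulation by testing against the constant function.

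\emph{Step 1: reduction to Theorem~\ref{thm:main}.} System \eqref{sys} is precisely \eqref{sys_extend} with
\begin{equation*}
f_1(u,w,z)=f_2(u,w,z)=r(u,w,z)=\tfrac{1}{\delta_K'}z-\tfrac{1}{\delta_K}uw,\qquad f_3(u,w,z)=-r(u,w,z),
\end{equation*}
which are polynomials and hence locally Lipschitz on $\R^3$. On $[0,\infty)^3$ one verifies: $f_1(0,w,z)=f_2(u,0,z)=\tfrac{1}{\delta_K'}z\ge0$ and $f_3(u,w,0)=\tfrac{1}{\delta_K}uw\ge0$, so \eqref{A1} holds; $f_2+f_3\equiv0$ gives \eqref{A2} and $f_1+f_3\equiv0$ gives the second alternative of \eqref{A3}; $f_1\le\tfrac{1}{\delta_K'}z\le C(w^2+z^2+1)$ gives \eqref{A4_1} with $\alpha=2$; $f_2\le\tfrac{1}{\delta_K'}z\le C(z+1)$ gives \eqref{A4} with $\beta=1<\tfrac{d+4}{d+2}$; and $|f_2|=|f_3|=|r|\le\tfrac{1}{\delta_K'}z+\tfrac{1}{\delta_K}uw$, a polynomial, gives \eqref{A5}. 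Together with the standing assumptions \eqref{Phi1}--\eqref{Phi2}, Theorem~\ref{thm:main} then produces a unique nonnegative weak solution $(u,w,z)$ in the stated regularity class, bounded on every time interval $[0,T]$.

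\emph{Step 2: conservation laws.} Since $\overline{\O(t)}$ and $\GG(t)$ are compact and $\Phi_t$ is a $C^2$-diffeomorphism, the constant function $1$ lies in $L^2_{H^1(\Omega)}$ and in $L^2_{H^1(\Gamma)}$, with vanishing weak material derivative. Taking $\eta=\psi=\xi=1$ in the three weak equations kills the diffusion terms ($\na1=\naG1=0$); the bulk advection term reduces, by the divergence theorem, to $\intO\na\cdot(\J_\Omega u)=\intG ju$ (the $\pa D(t)$ contribution vanishes by the no-flux/kinematic boundary conditions there, as reflected by the absence of $\pa D$ boundary terms in the weak formulation); and the surface advection terms vanish, $\intG\naG\cdot(\J_\Gamma w)=\intG\naG\cdot(\J_\Gamma z)=0$, because $\J_\Gamma|_{\GG(t)}$ is tangential — by \eqref{eq_Vp} the velocities $\VV_p$ and $\VV_\Gamma$ have the same normal component on $\GG(t)$ — so each integrand is the surface divergence of a tangential field on a closed hypersurface. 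Combining this with the Leibniz (transport) formula $\frac{d}{dt}\intO u=\langle\dot u,1\rangle+\intO u\,\na\cdot\VV_p$ and its surface analogue reduces the weak equations to
\begin{equation*}
\frac{d}{dt}\intO u=\intG f_1,\qquad \frac{d}{dt}\intG w=\intG f_2,\qquad \frac{d}{dt}\intG z=\intG f_3
\end{equation*}
for a.e. $t\in[0,T]$. Adding the first and third identities, and the second and third, and using $f_1+f_3=f_2+f_3=0$, gives $\frac{d}{dt}\bigl(\intO u+\intG z\bigr)=0$ and $\frac{d}{dt}\bigl(\intG w+\intG z\bigr)=0$; since $t\mapsto\intO u(t)$, $t\mapsto\intG w(t)$, $t\mapsto\intG z(t)$ are absolutely continuous (from the membership $(u,w,z)\in H^1_{H^1(\Omega)^*}\cap L^2_{H^1(\Omega)}\times(H^1_{L^2(\Gamma)}\cap L^2_{H^1(\Gamma)})^2$ and the Leibniz formula for evolving-domain function spaces), integrating from $t=0$ and inserting the initial data yields \eqref{law1} and \eqref{law2}.

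\emph{Expected main obstacle.} The whole analytic difficulty is already packaged in Theorem~\ref{thm:main}; here the only delicate point is carrying out Step 2 for weak rather than classical solutions — checking that $1$ is an admissible test function with zero material derivative, that the advective and boundary terms collapse as claimed with no leftover flux through $\pa D(t)$, and that the resulting identities may legitimately be integrated in time. The remainder is a routine bookkeeping of the structural conditions \eqref{A1}--\eqref{A5}.
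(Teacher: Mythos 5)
Your proposal is correct and follows the same route the paper takes (which for this corollary is left almost entirely implicit): reduce to Theorem~\ref{thm:main} by verifying \eqref{A1}--\eqref{A5} for $r(u,w,z)=\tfrac{1}{\delta_K'}z-\tfrac{1}{\delta_K}uw$ with $\alpha=2$, $\beta=1$, then obtain \eqref{law1}--\eqref{law2} by testing with constants. You supply the bookkeeping that the paper only asserts ("It is straightforward to verify..."), and your Step~2 correctly identifies the two points that actually need a word — that $\J_\Gamma$ is tangential on the closed surface $\Gamma(t)$ (via \eqref{eq_Vp}) so the surface advection integrals vanish, and that the cancellation $\intO\na\cdot(\J_\Omega u)=\intG ju$ together with the transport identity $\tfrac{d}{dt}\intO u=\langle\dot u,1\rangle+\intO u\,\na\cdot\VV_p$ reduces the weak equations to $\tfrac{d}{dt}\intO u=\intG f_1$ etc. — which is exactly the computation the paper performs (as an inequality) in the proof of Theorem~\ref{thm:main}.
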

	
	\medskip
	If the domain is fixed, i.e. $\Omega(t)\equiv \Omega_0$, $\Gamma(t)\equiv \Gamma_0$, for all $t>0$, it was shown in \cite{alphonse2018coupled} by an  entropy method that the weak solution converges exponentially to the unique positive equilibrium. Due to the technique of proving boundedness of solutions, this convergence still requires the dimension restriction, i.e. $d\leq 2$. An immediate consequence of Corollary \ref{cor:sys} is that we can obtain this convergence without any dimension restriction. One natural question is that: \textit{can one get similar large time dynamics allowing the domain to change but material velocities decay to zero as $t\to\infty$?} We will answer this question affirmatively by adapting the entropy method to the case of evolving domains, under the assumptions that the volume of the domain and area of the boundary remain unchanged as $t\to\infty$. 
	
	To determine the large time behaviour of \eqref{sys} we consider, for given positive constants $M_1, M_2$, the following algebraic system
	\begin{equation}\label{equi_sys}
	\begin{cases}
	\zinf = \uinf\winf,\\
	\uinf \mathcal{L}^{d+1}(\Omega_0) + \zinf \mathcal{L}^{d}(\Gamma_0) = M_1,\\
	\winf \mathcal{L}^{d}(\Gamma_0) + \zinf \mathcal{L}^{d}(\Gamma_0) = M_2,
	\end{cases}
	\end{equation}
	where $\mathcal{L}^{d+1}$ and $\mathcal{L}^{d}$ are Lebesgue measures in $\R^{d+1}$ and $\R^{d}$ respectively. The existence and uniqueness of a positive solution $(\uinf,\zinf,\winf)$ to \eqref{equi_sys} follows from elementary calculations. Note that if all velocities are zero, i.e. $\VV_\Omega = \VV_\Gamma = \VV_p = 0$, or in other words, the domain is fixed $\Omega(t)\equiv \Omega_0$ and $\Gamma(t)\equiv \Gamma_0$ for all $t>0$, then the triple $(\uinf, \zinf, \winf)$ is the equilibrium of the system \eqref{sys} as it was considered in \cite{alphonse2018coupled}.
	The large time behaviour of solutions to \eqref{sys} is stated in the following theorem.
	\begin{theorem}\label{thm:convergence}
		Assume the domains satisfy
		\begin{equation}\label{B}\tag{B}
		\mathcal{L}^{d+1}(\Omega(t)) = \mathcal{L}^{d+1}(\Omega_0) \quad \text{ and } \quad \mathcal{L}^{d}(\Gamma(t)) = \mathcal{L}^d(\Gamma_0), \quad\text{for all } t> 0
		\end{equation}
		and the following inequalities hold 
		\begin{equation}\label{aLSI}
		\intO \frac{|\na f|^2}{f} \ge C_{\text{LSI},\Omega}\intO f\log \frac{f}{\overline f}, \quad \intG \frac{|\na_\GG g|^2}{g} \ge C_{\text{LSI},\GG}\intG g\log\frac{g}{\overline g},
		\end{equation}	
		\begin{equation}\label{aTrPW}
		\intO|\na f|^2 \ge C_{\text{TrPW}}\intG |f - \overline f|^2, \quad \intG |\na_\GG g|^2 \ge C_{\text{PW}}\intG |g - \overline g|^2
		\end{equation}
		with $\overline f = \measv(\O(t))^{-1}\intO f(x)$ and $\overline g = \measb(\GG(t))^{-1}\intG g(x)$, where the constants $C_{LSI,\Omega}$, $C_{LSI,\GG}$, $C_{TrPW}$, $C_{PW}$ can be chosen independent of $t>0$.
		
		\medskip
		If the velocities satisfy
		\begin{equation}\label{decay}
		\limsup_{t\to\infty}\left(\|\VV_\O(t)\|_{L^\infty(\R^d)} + \|\VV_\GG(t)\|_{L^\infty(\R^d)}\right) = 0
		\end{equation}
		then
		\begin{equation*}
		\lim_{t\to\infty}(\|u(t) - \uinf\|_{L^1(\O(t))} + \|w(t)-\winf\|_{L^1(\GG(t))} + \|z(t) - \zinf\|_{L^1(\GG(t))}) = 0.
		\end{equation*}
		Moreover, if
		\begin{equation}\label{ex_decay}
		\|\VV_\O(t)\|_{L^\infty(\R^d)} + \|\VV_\GG(t)\|_{L^\infty(\R^d)} \leq Ce^{-\delta t}
		\end{equation}
		for some $\delta >0$, then there exists $\delta>\mu >0$ such that 
		\[
		\|u(t) - \uinf\|_{L^1(\O(t))} + \|w(t)-\winf\|_{L^1(\GG(t))} + \|z(t) - \zinf\|_{L^1(\GG(t))} \leq Ce^{-\mu t}.
		\]
	\end{theorem}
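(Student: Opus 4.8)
\emph{Strategy.} The plan is to extend the entropy method, working with the relative free energy
\begin{equation*}
\mathcal E(t):=\intO\Big(u\log\tfrac{u}{\uinf}-u+\uinf\Big)+\intG\Big(w\log\tfrac{w}{\winf}-w+\winf\Big)+\intG\Big(z\log\tfrac{z}{\zinf}-z+\zinf\Big)\ \ge\ 0.
\end{equation*}
By the conservation laws \eqref{law1}--\eqref{law2} and the hypothesis \eqref{B}, the affine parts cancel, so in fact $\mathcal E(t)=\intO u\log\tfrac{u}{\uinf}+\intG w\log\tfrac{w}{\winf}+\intG z\log\tfrac{z}{\zinf}$, and — again by \eqref{B} and \eqref{equi_sys} — the constants $(\uinf,\winf,\zinf)$ satisfy the conservation laws at \emph{every} time $t$. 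First I would differentiate $\mathcal E$ along solutions, using the transport theorem on the evolving bulk and surface together with the weak formulation of Theorem~\ref{thm:main}, tested (after a standard $\log(\,\cdot\,+\varepsilon)$ regularisation justified by the regularity in Theorem~\ref{thm:main}) against $\log\tfrac{u}{\uinf}$, $\log\tfrac{w}{\winf}$, $\log\tfrac{z}{\zinf}$. After integrating by parts, the flux terms on $\GG(t)$ combine with the boundary condition for $u$, the contributions carrying $j$ cancel, the flux on $\pa D(t)$ vanishes, and one is left with
\begin{equation*}
\frac{d}{dt}\mathcal E(t)=-\mathcal D_{\mathrm{diff}}(t)-\mathcal D_{\mathrm{reac}}(t)+\mathcal R(t),
\end{equation*}
where $\mathcal D_{\mathrm{diff}}=\dO\intO\tfrac{|\na u|^2}{u}+\dG\intG\tfrac{|\naG w|^2}{w}+\dGG\intG\tfrac{|\naG z|^2}{z}$ is the Fisher information, $\mathcal D_{\mathrm{reac}}=-\intG r(u,w,z)\log\tfrac{uw\,\zinf}{\uinf\winf\,z}\ \ge\ 0$ is the reaction entropy production (nonnegative since $r(\uinf,\winf,\zinf)=0$ and $(a-b)(\log a-\log b)\ge 0$), and $\mathcal R(t)$ gathers every term carrying $\J_\O=\VV_\O-\VV_p$, $\J_\GG=\VV_\GG-\VV_p$, $\na\cdot\VV_p$ or $\naG\cdot\VV_p$.

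\emph{Error control.} Writing $\intO\J_\O\cdot\na u=\intO\big(\J_\O\sqrt u\big)\cdot\tfrac{\na u}{\sqrt u}$ and using Cauchy--Schwarz with $\intO u\le M_1$ (and the analogues on $\GG(t)$, using $\intG w+\intG z=M_2$), the gradient terms in $\mathcal R$ are bounded by $\tfrac12\mathcal D_{\mathrm{diff}}+C\big(\|\J_\O\|_\infty^2+\|\J_\GG\|_\infty^2\big)$, while the remaining zeroth-order terms are bounded by $C\big(\|\na\cdot\VV_p\|_\infty+\|\naG\cdot\VV_p\|_\infty\big)$. Using \eqref{decay} together with \eqref{Phi1}--\eqref{Phi2} (so that $\VV_p$ and its first derivatives inherit the decay of the material velocities) these collapse into an error $\varepsilon(t)$ with $\varepsilon(t)\to 0$, and $\varepsilon(t)\le Ce^{-\delta t}$ under \eqref{ex_decay}; hence
\begin{equation*}
\frac{d}{dt}\mathcal E(t)\le-\tfrac12\mathcal D_{\mathrm{diff}}(t)-\mathcal D_{\mathrm{reac}}(t)+\varepsilon(t).
\end{equation*}

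\emph{Entropy--entropy dissipation inequality.} The core step is $\tfrac12\mathcal D_{\mathrm{diff}}+\mathcal D_{\mathrm{reac}}\ge\lambda\,\mathcal E$ with $\lambda>0$ independent of $t$. I would split $\mathcal E=\mathcal E_1+\mathcal E_2$ via the elementary identity $\intO\phi_{\uinf}(u)=\intO u\log\tfrac u\ou+\measv(\O(t))\,\phi_{\uinf}(\ou)$ (and likewise on $\GG(t)$), where $\phi_a(s)=s\log\tfrac sa-s+a$, $\ou=\measv(\O_0)^{-1}\intO u$, $\ow=\measb(\GG_0)^{-1}\intG w$, $\oz=\measb(\GG_0)^{-1}\intG z$, with \eqref{B} allowing $\measv(\O(t)),\measb(\GG(t))$ to be replaced by the constants $\measv(\O_0),\measb(\GG_0)$. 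The logarithmic Sobolev inequalities \eqref{aLSI} give $\mathcal D_{\mathrm{diff}}\ge c\,\mathcal E_1$ at once. For $\mathcal E_2$, the conservation laws combined with \eqref{B} and \eqref{equi_sys} read $(\ou-\uinf)\measv(\O_0)=-(\oz-\zinf)\measb(\GG_0)$ and $\ow-\winf=-(\oz-\zinf)$, so $\mathcal E_2$ is controlled by a single scalar, say $|\oz-\zinf|$; bounding $\mathcal D_{\mathrm{reac}}\ge c\intG\big(\sqrt{uw}-\kappa\sqrt z\big)^2$ with $\kappa=\sqrt{\uinf\winf/\zinf}$, and transferring $u,w,z$ on $\GG(t)$ to their averages via the Poincaré and trace--Poincaré inequalities \eqref{aTrPW} applied to $\sqrt u,\sqrt w,\sqrt z$ (using $\intO\tfrac{|\na u|^2}{u}=4\intO|\na\sqrt u|^2$), one arrives at $\tfrac12\mathcal D_{\mathrm{diff}}+\mathcal D_{\mathrm{reac}}\ge\lambda'\mathcal E_2$, hence the claim with $\lambda=\min(c,\lambda')/2$. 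I expect this to be the main obstacle: the passage from the nonlinear defect $\sqrt{uw}$ versus $\sqrt z$ to the averages, \emph{across the bulk--surface interface and with a constant uniform in $t$}, is precisely where the $t$-uniformity of the constants in \eqref{aLSI}--\eqref{aTrPW} is used, and one must also exclude degeneracy as $\oz\to 0$ (a routine case distinction using $\uinf,\winf,\zinf>0$ and the fixed masses); reconciling the decay of the \emph{material} velocities with that of the \emph{parametrisation} velocity $\VV_p$ and its derivatives in the error step is a secondary point requiring care.

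\emph{Conclusion.} Combining the last two displays gives $\tfrac{d}{dt}\mathcal E(t)\le-\lambda\,\mathcal E(t)+\varepsilon(t)$ with $\varepsilon(t)\to 0$. A Gronwall/comparison argument then forces $\mathcal E(t)\to 0$ (for each $\epsilon>0$ one has $\varepsilon\le\epsilon$ eventually, hence $\limsup_t\mathcal E\le\epsilon/\lambda$), and when $\varepsilon(t)\le Ce^{-\delta t}$ it yields $\mathcal E(t)\le Ce^{-\mu t}$ for every $0<\mu<\min(\lambda,\delta)$, in particular $\mu<\delta$. Finally the Csiszár--Kullback--Pinsker inequality — available because $\intO u\le M_1$, $\intG w\le M_2$, $\intG z\le M_2$ supply the needed $L^1$ bounds — gives $\|u-\uinf\|_{L^1(\O(t))}^2+\|w-\winf\|_{L^1(\GG(t))}^2+\|z-\zinf\|_{L^1(\GG(t))}^2\le C\,\mathcal E(t)$, whence both the qualitative convergence and, after relabelling $\mu/2\mapsto\mu$, the stated exponential rate.
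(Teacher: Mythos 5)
Your overall strategy---relative entropy, a $t$-uniform entropy--entropy dissipation inequality, Gronwall, and Csisz\'ar--Kullback--Pinsker---is exactly the paper's, and your sketch of $\tfrac12\mathcal D_{\mathrm{diff}}+\mathcal D_{\mathrm{reac}}\ge\lambda\,\mathcal E$ matches Proposition~\ref{pro:eed} (split off the averages, use \eqref{aLSI} for the fluctuating part, and the conservation laws together with the reaction term and \eqref{aTrPW} for the averaged part). The conclusion via Gronwall and Lemma~\ref{CKP} is also as in the paper.

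The genuine gap is in your error-control step. You bound the remainder $\mathcal R(t)$ by $C\big(\|\J_\O\|_\infty^2+\|\J_\GG\|_\infty^2\big)+C\big(\|\na\cdot\VV_p\|_\infty+\|\naG\cdot\VV_p\|_\infty\big)$ and then assert that these decay because ``$\VV_p$ and its first derivatives inherit the decay of the material velocities'' from \eqref{Phi1}--\eqref{Phi2}. They do not: \eqref{eq_Vp} constrains only the \emph{normal} component of $\VV_p$ on $\GG(t)$ and $\pa D(t)$; the tangential part, the interior values of $\VV_p$, and hence $\J_\O=\VV_\O-\VV_p$ and $\na\cdot\VV_p$, are unconstrained by \eqref{decay} (one can reparametrise the same evolving domain with an arbitrarily large tangential velocity). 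Since \eqref{decay} concerns only $\VV_\O$ and $\VV_\GG$, your $\varepsilon(t)$ need not tend to zero. The paper resolves this by never estimating the $\VV_p$-terms separately: in \eqref{en_prod} the bulk contributions are regrouped as
\begin{equation*}
-\intO(\VV_\O-\VV_p)\cdot\na u+\intO(u-\uinf)\na\cdot\VV_p
=-\intO\VV_\O\cdot\na u+\intO\na\cdot\big((u-\uinf)\VV_p\big),
\end{equation*}
and the divergence term reduces to the boundary integral $\intG(u-\uinf)\VV_p\cdot\nu=\intG(u-\uinf)\VV_\GG\cdot\nu$, controlled by $\|\VV_\GG\|_\infty$ alone (with the trace interpolation inequality absorbing the $u$-part into $\tfrac{\dO}{4}\intO|\na u|^2/u$); likewise the term $\intG\big[u\log(u/\uinf)\big](\VV_p-\VV_\GG)\cdot\nu$ vanishes because $(\VV_p-\VV_\GG)\cdot\nu=0$. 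You flagged this reconciliation as ``a secondary point requiring care,'' but it is the central structural step of the dissipation estimate, and the resolution you propose is not valid.

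A smaller slip: the affine parts of $\mathcal E$ do \emph{not} cancel under \eqref{law1}--\eqref{law2}, \eqref{B} and \eqref{equi_sys}; a direct computation gives $\intO(\uinf-u)+\intG(\winf-w)+\intG(\zinf-z)=\intG z-\zinf\measb(\GG_0)$, which is nonzero in general. This does not damage the rest of your argument, since you never use the simplified expression, but the claimed identity is false as stated.
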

	\begin{remark}\hfill
		\begin{itemize}
			\item Although $(\uinf, \zinf, \winf)$ is not an equilibrium of the system \eqref{sys}, we refer the results in Theorem \ref{thm:convergence} as {\normalfont convergence to equilibrium} since it is the extension of that in \cite{alphonse2018coupled} or other works in the literature.
			\item The assumption \eqref{B} is needed for us to be able to prove the functional inequality which is essential in the entropy method. It can be obtained, for instance if the velocities satisfy $\VV_\O \cdot \nu = 0$ and $\naG\cdot \VV_\Gamma = 0$. We believe that the convergence to equilibrium still holds if the volume $\Omega(t)$ and area $\Gamma(t)$ converge to fixed values as $t\to\infty$. Investigating this issue might require some stability results of vector-valued functional inequalities which goes beyond the scope of this current paper.
			\item It is remarked that the proof the convergence of solutions does not use the convergence of domains.
			\item We expect that the uniformity of constants for the Log-Sobolev-, Trace-Poincar\'e-, and Poincar\'e inequalities in \eqref{aLSI} and \eqref{aTrPW} can be proved by using \eqref{B} and \eqref{decay}, see e.g. \cite{boulkhemair2007uniform} for the case of Poincar\'e inequality. However, this goes beyond the scope of the present paper and therefore will be investigated in a separated work.
		\end{itemize}
	\end{remark}
	%
	
	\subsection{Key ideas}
	
	Let us sketch the main ideas used to prove the results in Theorems \ref{thm:main} and \ref{thm:convergence}.
	
	\medskip
	For Theorem \ref{thm:main}, the assumption \eqref{A1} gives nonnegativity of solutions for  nonnegative   initial data. From \eqref{A2} and \eqref{A3} we get
	\begin{equation}\label{Linfty-L1}
	\|u\|_{L^\infty_{L^1(\O)}}, \|w\|_{L^\infty_{L^1(\GG)}}, \|z\|_{L^\infty_{L^1(\GG)}} \leq M
	\end{equation}
	where $M$ is a constant independent of $T$. Concerning assumption \eqref{A4} we consider only $f_2(u,w,z) \leq C(w^{\beta_1} + z^{\beta_2} + 1)$, since the case $f_3(u,w,z) \leq C(w^{\beta_1} + z^{\beta_2}+1)$ can be treated similarly.  Taking the sum of the equations for $w$ and $z$, we get
	\begin{equation}\label{sum_wz}
	\dot{w} + \dot{z} + (w+z)\naG\cdot \VV_p - \Delta_\GG(\dG w + \dGG z) + \naG\cdot (\J_\GG(w+z)) \leq 0.
	\end{equation}
	By a duality lemma (Lemma \ref{duality-L2}) we get
	\begin{equation*}
	\|w\|_{L^2_{L^2(\GG)}} + \|z\|_{L^2_{L^2(\GG)}} \leq C(T).
	\end{equation*}In the assumption \eqref{A4} we have $f_2(u,w,z) \leq C(w^{\beta} + z^{\beta} + 1)$, and since $\beta < \frac{d+4}{d+2} < 2$, it follows that
	\begin{equation*}
	\dot{w} + w\naG\cdot\VV_p - \dG\Delta_\GG w + \naG\cdot(\J_\GG w) \leq C(w^{\beta} + z^{\beta} + 1) \in L^{s_0}_{L^{s_0}(\GG)}
	\end{equation*}
	for some $s_0 >1$. 	We obtain by the heat regularity (Lemma \ref{heat-regularity}) and the comparison principle (Lemma \ref{comparison-surface}) that $w \in L^{s_1}_{L^{s_1}(\GG)}$ with some $s_1 > s_0$. By this and another duality lemma (Lemma \ref{duality-1}) we see that $z \in L^{s_1}_{L^{s_1}(\GG)}$.
	Repeating this procedure gives a sequence $\{s_n\}$ with $\lim_{n\to\infty}s_n = \infty$, therefore $w, z\in L^r_{L^r(\GG)}$ for all $r\in [1,\infty)$. 
	Now from \eqref{A4_1}
	\begin{equation*}
	\begin{aligned}
	\dot{u} + u\na\cdot \VV_p - \dO \Delta u + \na\cdot (\J_\O u) &= 0, &&\text{ in } \O(t),\\
	\dO \na u \cdot \nu - uj &\leq C(w^{\alpha} + z^{\alpha} + 1), && \text{ on } \GG(t),\\
	\end{aligned}
	\end{equation*}
	it follows that $u \in L^{\infty}_{L^\infty(\O)}$ thanks to the comparison principle (Lemma \ref{comparison-Neumann}) and regularity of the heat equation with Neumann boundary condition (Lemma \ref{L-infinity-Neumann}). In addition $u \in L^2_{H^1(\Omega)}$. Therefore, the trace inequality implies $u\in L^r_{L^r(\GG)}$ for all $r\in [1,\infty)$. Now, thanks to \eqref{A5}, one finally concludes $w, z\in L^\infty_{L^\infty(\GG)}$ (by Lemma \ref{L-infinity-manifold}).

	\medskip
	
	To prove the convergence to equilibrium result in Theorem \ref{thm:convergence} we apply the so-called entropy method. Firstly, thanks to assumption \eqref{B}, we can define a unique equilibrium satisfying \eqref{equi_sys}. Next, consider the relative entropy
	\begin{align*}
	E[u,w,z] = \intO \left(u\log\frac{u}{\uinf} - u + \uinf\right)  &+ \intG \left(w\log\frac{w}{\winf} - w + \winf \right) \\&+ \intG \left(z\log\frac{z}{\zinf} - z + \zinf \right) .
	\end{align*}
	Unlike the case of fixed domains, $E[u,w,z]$ here is not exactly a Lyapunov function since there are correction terms due to the fact that the domains are also time-dependent. Nevertheless, we can estimate
	\begin{equation*}
	D[u,w,z]:= -\frac{d}{dt}E[u,w,z] \geq \wt{D}[u,w,z] - Ch(t)
	\end{equation*}
	where $h(t)$ is a function decaying to zero as $t\to\infty$, and $\wt{D}[u,w,z]$ is the usual entropy dissipation (as in the fixed domain case) defined by
	\begin{equation*}
	\wt{D}[u,w,z] = \frac \dO 2\intO\frac{|\na u|^2}{u}  + \frac\dG 2\intG\frac{|\naG w|^2}{w}  + \frac\dGG 2\intG\frac{|\naG z|^2}{z}  + \intG(z-uw)\log\frac{z}{uw} .
	\end{equation*}	
	The cornerstone of the entropy method is to prove the functional inequality
	\begin{equation}\label{e1}
	\boxed{\wt{D}[u,w,z] \geq \lambda E[u,w,z]}
	\end{equation}
	under the mass conservation laws \eqref{law1} and \eqref{law2}. We remark that \textbf{\eqref{e1} is a pure vector-valued functional inequality} in the sense that it does not use any property of system \eqref{sys}. Because of this, we believe that it can be useful in other situations. This inequality is proved by adapting ideas in fixed domains, see e.g. \cite{fellner2018well,fellner2018convergence}, thanks to the fact that the domain volume and surface area are fixed and the fact that the constants in \eqref{aLSI} and \eqref{aTrPW} are not depending on time. With this inequality at hand, we arrive at
	\begin{equation*}
	\frac{d}{dt}E[u,w,z] \leq -\lambda E[u,w,z] + Ch(t).
	\end{equation*}
	By Gronwall's inequality and the fact that $h(t)$ is decaying, we obtain 
	\begin{equation*}
	\lim_{t\to\infty}E[u,w,z](t) = 0.
	\end{equation*}
	And moreover, under the assumption \eqref{ex_decay},
	\begin{equation*}
	E[u,w,z](t) \leq Ce^{-\mu t}
	\end{equation*}
	for some $\mu>0$. By the Csisz\'ar-Kullback-Pinsker inequality (Lemma \ref{CKP}), we obtain the desired decay of the solution in $L^1$-norm.
	
	%
	%
	\medskip
	\noindent{\bf Notation.}
	\begin{itemize}
		\item For any $T>0$ we write $Q_T = \bigcup_{t\in (0,T)}\O(t)\times \{t\}$ and $\GGG_T = \bigcup_{t\in(0,T)}\GG(t)\times \{t\}$.
		\item We denote by $C$ a generic nonnegative constant , whose value can be different from line to line (or even in the same line).
		\item For simplicity we write $\|\VV_\O\|_\infty$ and $\|\VV_\GG\|_\infty$ for $\|\VV_\O\|_{L^\infty_{L^\infty(\O)}}$ and $\|\VV_\GG\|_{L^\infty_{L^\infty(\GG)}}$ respectively.
		\item We will always omit the differentials in the integrals, i.e., we write $\int_0^T f$, $\int_{\Omega(t)} f$ or $\int_{\Gamma(t)} f$ rather than $\int_0^T f \dt$, $\int_{\Omega(t)} f \dx$ or $\int_{\Gamma(t)} f \mathrm d\sigma$.
	\end{itemize}

	
	\section{Global existence and boundedness}
	This section aims at proving Theorem \ref{thm:main}. The proof combines duality methods in evolving surfaces and heat regularisation. To obtain the duality estimates, we first show in Subsection \ref{subs:maximal_regularity} an $L^p$-maximal regularity in moving surfaces. This result will be used in Subsection \ref{subs:duality} to prove two important duality estimates, which are the starting point of proving Theorem \ref{thm:main}. We would like to emphasise that these two subsections are of independent interest, and, to our belief, could be used in many other situations. The heat regularisation in moving domains is shown in Subsection \ref{subs:heat_regularisation}, and the proof of Theorem \ref{thm:main} is finally presented in the last subsection.
	\subsection{$L^p$-maximal regularity in evolving surfaces}\label{subs:maximal_regularity}

	\begin{lemma}\label{max-regular}
		Let $m>0$, and $y$ be the solution to the parabolic equation
		\begin{equation}\label{evol}
		\begin{aligned}
		\dot y - m \Delta_\GG y + \naG y \cdot \VV_\GG &= f(x,t), \quad \text{ for } x\in \Gamma(t),\\
		y(x,0) &= 0, \quad \text{ for } x\in \Gamma_0.
		\end{aligned}
		\end{equation}
		Then, for each $1< p <\infty$ and $f\in L^p_{L^p(\GG)}$, there exists a constant $C_{m,p,T}>0$ such that
		\begin{equation*}
		\|\dot y \|_{L^p_{L^p(\Gamma)}} + \sum_{s=0}^2\|\pa_{x,\Gamma}^s y\|_{L^p_{L^p(\Gamma)}} \leq C_{m,p,T}\|f\|_{L^p_{L^p(\GG)}}
		\end{equation*}
		In particular, 
		\begin{equation*}
		\|y\|_{L^p_{L^p(\GG)}} + \|\dot y\|_{L^p_{L^p(\GG)}} + \|\Delta_\GG y\|_{L^p_{L^p(\GG)}} \leq C_{m,p,T}\|f\|_{L^p_{L^p(\GG)}}.
		\end{equation*}
	\end{lemma}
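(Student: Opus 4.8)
The plan is to reduce the problem on the evolving surface to a standard $L^p$-maximal regularity statement for a parabolic equation with time-dependent (but smooth) coefficients on the fixed reference manifold $\GG_0$, via the pullback $\phi_{-t}$. Write $\tilde{y}(t) := \phi_{-t} y(t) = y(t)\circ\Phi_t$ and $\tilde f(t) := \phi_{-t} f(t)$. By assumptions \eqref{Phi1}--\eqref{Phi2}, $\Phi_t$ is a $C^2$-diffeomorphism depending $C^1$ in time, so the push-forward of the Laplace--Beltrami operator $\Delta_{\GG(t)}$ becomes a second-order elliptic operator $A(t)$ on $\GG_0$ whose coefficients are controlled in terms of the metric tensor of $\GG(t)$ pulled back to $\GG_0$; these coefficients and their spatial derivatives up to first order are continuous on $[0,T]\times\GG_0$, and uniform ellipticity holds because $\overline{[0,T]}$ is compact and $\Phi_t$ is a diffeomorphism. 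The crucial identity is that the parametric material derivative transforms as $\phi_{-t}(\dot y(t)) = \partial_t \tilde y(t)$ (this is precisely the defining property of $\partial^\bullet_\GG$ and the flow $\Phi$); the advection term $\naG y\cdot\VV_\GG$ pulls back to a first-order operator with continuous coefficients, which is a lower-order perturbation. Thus $\tilde y$ solves $\partial_t \tilde y + A(t)\tilde y + (\text{l.o.t.}) = \tilde f$ on $\GG_0$ with $\tilde y(0)=0$.

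**Key steps in order.** First, set up the pullback carefully and verify the transformation rules for $\dot y$, $\Delta_\GG$, and $\naG(\cdot)\cdot\VV_\GG$, recording that the resulting operator $A(t)$ on $\GG_0$ is uniformly (in $t$) strongly elliptic with $C([0,T];C^1(\GG_0))$ coefficients — here one uses the $C^2$-regularity of $\Phi$ and compactness of $[0,T]$. Second, invoke the classical $L^p$-maximal regularity theory for parabolic equations on a closed (compact, boundaryless) Riemannian manifold with such coefficients — e.g. via the fact that $-A(t)$ generates an analytic semigroup with bounded $H^\infty$-calculus on $L^p(\GG_0)$, together with non-autonomous maximal regularity results (Acquistapace--Terreni / Prüss--Schnaubelt type, or simply Amann's theory on compact manifolds), to obtain
\[
\|\partial_t \tilde y\|_{L^p(0,T;L^p(\GG_0))} + \|\tilde y\|_{L^p(0,T;W^{2,p}(\GG_0))} \le C_{m,p,T}\|\tilde f\|_{L^p(0,T;L^p(\GG_0))}.
\]
Third, absorb the lower-order advection term: since $\|\naG y\cdot\VV_\GG\|$ pulls back to something bounded by $C\|\tilde y\|_{W^{1,p}}$, and $W^{1,p}$ interpolates between $L^p$ and $W^{2,p}$, one uses Ehrling/Young to move it to the left-hand side at the cost of enlarging $C_{m,p,T}$ (the smallness needed from the interpolation parameter is available because the top-order a priori bound is finite on $[0,T]$; alternatively one runs a short-time iteration and glues). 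Fourth, push everything back to $\GG(t)$ using that $\phi_{-t}$ and its inverse are bounded isomorphisms $L^p(\GG(t))\to L^p(\GG_0)$ and $W^{s,p}(\GG(t))\to W^{s,p}(\GG_0)$ for $s=0,1,2$ with constants uniform in $t\in[0,T]$ — this is exactly how the spaces $L^p_{W^{s,p}(\GG)}$ are defined — yielding the stated estimate for $\|\dot y\|_{L^p_{L^p(\GG)}} + \sum_{s=0}^2 \|\partial^s_{x,\GG} y\|_{L^p_{L^p(\GG)}}$. The "in particular" statement is immediate since $\Delta_\GG y$ and $y$ are controlled by the $s=0,2$ terms.

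**Main obstacle.** The genuinely delicate point is the non-autonomous $L^p$-maximal regularity for the family $\{A(t)\}_{t\in[0,T]}$ on the closed manifold $\GG_0$: one must check that the hypotheses of the chosen abstract theorem hold, in particular the required time-regularity of $t\mapsto A(t)$ (Hölder continuity in the operator-norm sense on suitable domains, or the relative-boundedness conditions) and the uniform sectoriality/bounded imaginary powers. This is where the $C^2$-assumption on $\Phi$ and its $C^1$-in-time dependence are consumed. A secondary technical nuisance is making the "uniform in $t$" constants in the pullback norm equivalences explicit, but this follows routinely from compactness of $[0,T]$ and the $C^2$-diffeomorphism property; similarly the absorption of the advective first-order term is standard once the top-order estimate is in hand. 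I would expect the bulk of the written proof to be devoted to justifying the reduction to $\GG_0$ and citing/assembling the correct maximal-regularity black box, rather than to any novel estimate.
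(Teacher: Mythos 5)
Your proposal is correct in outline and shares the paper's first move: both pull the equation back along $\Phi_t$ to the reference surface $\Gamma_0$, using that the material derivative becomes $\partial_t\tilde y$ and that $\Delta_{\Gamma(t)}$ becomes a uniformly elliptic divergence-form operator with coefficients built from the pulled-back metric $G$. Where you diverge is in how the resulting non-autonomous problem on the closed manifold $\Gamma_0$ is handled. You propose to cite an abstract maximal-regularity black box (analytic semigroups with bounded $H^\infty$-calculus, Acquistapace--Terreni/Amann-type non-autonomous theory), absorbing the first-order advection term by interpolation afterwards. The paper instead carries out the localization by hand: it covers $\Gamma_0$ by charts $\varphi: B_2(0)\to U_{x_0}\subset\Gamma_0$, transforms the equation to a uniformly parabolic equation on a Euclidean ball, multiplies by a cutoff to obtain a homogeneous Dirichlet problem, applies the classical Ladyzhenskaya--Solonnikov--Uraltseva maximal regularity theorem there, and then glues the local estimates and pushes them forward to $\Gamma(t)$, absorbing the cutoff commutator terms via a Gagliardo--Nirenberg interpolation. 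In effect the paper proves, in elementary terms, exactly the ``black box'' you invoke. Your route is shorter on paper but shifts the burden to verifying the hypotheses of the abstract theorem for this specific operator family (in particular the uniform sectoriality and the time-regularity of $t\mapsto A(t)$, which the $C^1$-in-time, $C^2$-in-space flow does supply); the paper's route is longer but self-contained modulo the standard Euclidean LSU result, and it is the one that makes the constants' dependence on $\Phi$, $G$ and the charts explicit. One small point in your favour: the abstract theorem with zero initial data delivers the bound $\|\tilde y\|_{W^{2,1}_p}\le C\|\tilde f\|_{L^p}$ directly, without the residual $\|y\|_{L^p}$ term that the paper's gluing argument leaves on the right-hand side of its final display.
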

	\begin{remark}\label{backward}
		By changing variable $t$ to $T-t$, the maximal regularity in Lemma \ref{max-regular} also applies to the backward equation
		\begin{equation*}
		\begin{aligned}
		\dot y + m\Delta_\GG y + \naG y\cdot \VV_\GG &= f(x,t), \quad \text{ on } \GG(t),\\
		y(T) &= 0.
		\end{aligned}
		\end{equation*}
	\end{remark}
	
	The idea of the proof is to transform the equation on the moving interfaces to a fixed reference surface, and then do a further change of variables onto an open domain in $\R^d$ where the $L^p$-maximal regularity results are known. To do so, we make use of the calculations in \cite{elliott2015time}, as well as the ideas of \cite[Theorem 3.5]{sharma2016global}. First we introduce some notation. For a bounded domain $B\subset \mathbb R^{n-1}$, we denote 
	\begin{equation*}
	W^{2,1}_p(B\times(0,T)):= \left\{f\in L^p(B\times(0,T)) \;\big| \; \pa_t^r\pa_x^sf\in L^p(B\times(0,T)) \text{ for } r,s\in \mathbb N, 2r + s \leq 2 \right\}
	\end{equation*}
	equipped with the norm
	\begin{equation*}
	\|f\|_{W^{2,1}_p(B\times(0,T))}:= \|\pa_t f\|_{L^p(B\times (0,T))} + \sum_{s=0}^2\|\pa_x^sf\|_{L^p(B\times(0,T))}.
	\end{equation*}		
	For a function $u: \Gamma_0 \to \R$, the {\it tangential gradient} at $x\in \Gamma_0$ is defined as
	\begin{equation*}
	\na_\Gamma u(x) = \na \hat{u}(x) - (\nu(x)\cdot \na \hat{u}(x))\nu(x) = P(x)\na \hat{u}(x),
	\end{equation*}
	where $\hat{u}$ denotes an extension of $u$ to an open neighbourhood of $x$ in $\R^{n-1}$ and $P(x): \R^{n-1}\to T_x\Gamma_0$ denotes the projection onto the tangent space $T_x\Gamma_0$, defined by
	\begin{equation*}
	P(x) = \text{Id} - \nu(x)\otimes \nu(x), \quad \text{ with components } P_{ij}(x) = \delta_{ij}x + \nu_i(x)\nu_j(x).
	\end{equation*}
	We denote $\na_{\Gamma} = (\uD_1, \ldots, \uD_{n-1})$. Define the {\it Weingarten map} on $\Gamma_0$ as $H(x) = \na_{\Gamma}\nu(x) \in \R^{(n-1)\times(n-1)}$, and we have the commutator rule
	\begin{equation*}
	\uD_i\uD_j u - \uD_j\uD_i u = (H_{jk}\nu_i - H_{ik}\nu_j)\uD_k u, \quad 1\leq i,j,k\leq n-1,
	\end{equation*}
	where hereafter we will use the Einstein summation convention: 
	
	\medskip
	We have the family of diffeomorphisms $\Phi_t: \Gamma_0 \to \Gamma(t)$, and denote
	\begin{equation*}
	G: \bar{\Gamma}_0\times [0,T]\to \R^{(n-1)\times(n-1)}, \quad G_{ij}(x,t) = \uD_i\Phi_t(x)\uD_j\Phi_t(x) + \nu_i(x)\nu_j(x), \; 1\le i,j \le n-1,
	\end{equation*}
	and the inverse $G^{-1} = (G^{ij})_{i,j}$. The matrix $G$ is invertible and positive semi-definite at all points, thus it is positive-definite. Since it is also symmetric, the same is true for $G^{-1}$.
	
	\begin{proof}
		
		\noindent\underline{Step 1. Transform \eqref{evol} to an equation on $\Gamma_0$.}
		For a solution $y$ of \eqref{evol}, we denote the pullbacks to the reference surface $\Gamma_0$ as
		\begin{align*}
		\wt{y}(x,t):= y(\Phi_t(x),t), \qquad x\in\Gamma_0, t\in [0,T],\\
		\wt{f}(x,t):= f(\Phi_t(x), t), \qquad x\in\Gamma_0, t\in [0,T].
		\end{align*}
		As in \cite{elliott2015time}, it follows that $\wt{y}$ solves the system
		\begin{equation}\label{ee1}
		\begin{aligned}
		\pa_t \wt{y} - m\uD_i(G^{ij}\uD_j \wt{y}) - \frac{m}{2}P_{ij}G^{jk}G^{ml}\uD_mG_{ik}\uD_l \wt{y} = \wt{f}, &
		\quad (x,t)\in \Gamma_0\times (0,T),\\
		\wt{y}(x,0) = 0, &\quad x\in\Gamma_0. 
		\end{aligned}
		\end{equation}
		
		\medskip
		\noindent\underline{Step 2. Locally transform \eqref{ee1} to an equation on open sets of $\mathbb R^{n-1}$.}
		For a point $x_0 \in \Gamma_0$ and $(U_{x_0}, \varphi)$ a local chart at $x_0$, meaning that $U_{x_0}\subset \Gamma_0$ is an open set containing $x_0$ and $\varphi: \R^{n-1}\to U_{x_0}$ is a diffeomorphism such that $\varphi(0) = x_0$ and $\varphi(B_2(0)) \subset U_{x_0}$. Define
		\begin{equation*}
		\wh{y}(z,t) = \wt{y}(\varphi(z),t), \quad (z,t)\in B_2(0)\times (0,T).
		\end{equation*}
		Defining, for $z\in B_2(0)$, $t\in (0,T)$ and $1\leq i,j\leq n-1$,
		\begin{equation*}
		\wh{f}(z,t) = \wt{f}(\varphi(z),t), \quad \wh{G}_{ij}(z) = G_{ij}(\varphi(z)), \quad \wh{G}^{ij}(z) = G^{ij}(\varphi(z)), \quad \wh{P}_{ij}(z) = P_{ij}(\varphi(z))
		\end{equation*}
		we have
		\begin{equation*}
		\pa_t \wh{y}(z,t) = \wh{f}(z,t) + m(\uD_i(G^{ij}\uD_j \wt{y}))(\varphi(z)) + \frac{m}{2}\wh{P}_{ij}\wh{G}^{jk}\wh{G}^{ml}(\uD_mG_{ik})(\varphi(z))(\uD_l\wt{y})(\varphi(z)).
		\end{equation*}
		Using \cite[Equation (2.11)]{elliott2015time} we have the following: if $F: \Gamma_0 \to \R$ and $\varphi: B_2(0) \to \Gamma_0$, then for $\wh{F} = F\circ \varphi: B_2(0) \to \R$ we have
		\begin{equation*}
		\na \wh{F}(z) = D\varphi(z)(\na_\Gamma F)(\varphi(z)) \quad \Longrightarrow \quad (\na_\Gamma F)(\varphi(z)) = (D\varphi(z))^{-1}\na \wh{F}(z),
		\end{equation*}
		and writing $c^{ij} = (D\varphi)_{ij}^{-1}$ this is
		\begin{equation*}
		(\uD_iF)(\varphi(z)) = c^{ij}(z)D_j\wh{F}(z).
		\end{equation*}
		Using the above then allows us to write
		\begin{equation*}
		\begin{aligned}	
		(\uD_i(G^{ij}\uD_j\wt{y}))(\varphi(z)) &= c^{ik}(z)D_k(\wh{G}^{ij}(z)c^{jp}(z)D_p\wh{y}),\\
		(\uD_mG_{ik})(\varphi(z))(\uD_l\wt{y})(\varphi(z)) &= c^{mr}(z)c^{ls}(z)D_r(\wh{G}_{ik}(z))(D_s\wh{y}(z)).
		\end{aligned}	
		\end{equation*}
		For convenience we rewrite the first term as
		\begin{equation*}
		(\uD_i(G^{ij}(\uD_j\wt{y})))(\varphi(z)) = D_k(c^{ik}\wh{G}^{ij}c^{jp}D_p\wh{y}) - D_kc^{ik}\wh{G}^{ij}c^{jp}D_p\wh{y},
		\end{equation*}
		from where we conclude that $\wh{y}$ satisfies the equation
		\begin{equation}\label{ee2}
		\pa_t \wh{y} - mD_k(c^{ik}\wh{G}^{ij}c^{jp}D_p\wh{y}) + \left(D_kc^{ik}\wh{G}^{ij}c^{js} - \frac{m}{2}\wh{P}_{ij}\wh{G}^{jk}\wh{G}^{ml}c^{mr}c^{ls}D_r\wh{G}_{ik} \right)D_s\wh{y} = \wh{f}(z,t).
		\end{equation}
		The above can be rewritten as
		\begin{equation*}
		\pa_t \wh{y}(z,t) - m\na\cdot\left(\wh{A}(z,t)\na \wh{y}(z,t) \right) + \wh{b}(z,t)\cdot \na y(z,t) = \wh{f}(z,t),
		\end{equation*}
		where the matrix $\wh{A}(z,t)$ and the vector $\wh{b}(z,t)$ have entries
		\begin{equation*}
		\wh{A}_{ij}(z,t) = c^{ki}\wh{G}^{kl}c^{lj} \quad \text{ and } \quad \wh{b}_i(z,t) = D_kc^{lk}\wh{G}^{lj}c^{ji} - \frac{m}{2}\wh{P}_{sj}\wh{G}^{jk}\wh{G}^{ml}c^{mr}c^{li}D_r\wh{G}_{sk},
		\end{equation*}
		or more precisely
		\begin{equation*}
		\wh{A}_{ij}(z,t) = \sum_{k,l=1}^{n-1}c^{ki}\wh{G}^{kl}c^{lj}
		\end{equation*}
		and
		\begin{equation*}
		\wh{b}_i(z,t) = \sum_{j,k,l=1}^{n-1}D_kc^{lk}\wh{G}^{lj}c^{ji} - \frac m2 \sum_{j,k,l,m,r,s=1}^{n-1}\wh{P}_{sj}\wh{G}^{jk}\wh{G}^{ml}c^{mr}c^{li}D_r\wh{G}_{sk}.
		\end{equation*}
		Due to the assumptions on the evolution of the surfaces, it is clear that both $\wh{A}$ and $\wh{b}$ are bounded. To check that $\wh{A}$ is coercive, note that $\wh{G}$ is positive-definite (because $G$ is), and so, for $z = (z_1, \ldots, z_{n-1})$ and defining
		\begin{equation*}
		v = D\varphi^{-1}z \quad \text{ with coordinates } \quad v_k = c^{ki}z_i
		\end{equation*}
		we have
		\begin{equation*}
		\wh{A}(z,t)z\cdot z = c^{ki}\wh{G}^{kl}c^{lj}z_jz_i = \wh{G}v\cdot v \geq \lambda_{\wh{G}}|v|^2
		\end{equation*}
		for some $\lambda_{\wh{G}}>0$. 	Since $\varphi$ is diffeomorphism, there exists $C_\varphi>0$ such that $\|D\varphi\| \le C$, from where we obtain
		\begin{equation*}
		|z| = |D\varphi v| \leq \|D\varphi\|\,|v| \leq C_\varphi|v| \quad |v| \ge \frac{1}{C_\varphi}|z|,
		\end{equation*}
		and therefore
		\begin{equation*}
		\wh{A}(z,t)z\cdot z \geq \lambda_{\wh{G}}|v|^2 \geq \frac{\lambda_{\wh G}}{C_\varphi^2}|z|^2,
		\end{equation*}
		as desired. Note that we have not given boundary data for the equation \eqref{ee2} of $\wh{y}$. We proceed further by using a smooth cut-off function $\xi: B^{n-1}_2(0) \to [0,1]$ such that $\xi(z)\equiv 1$ for $|z| \leq 1$ and $\xi(z) \equiv 0$ for $|z| \geq 3/2$. Define $\bb{y}(z,t):= \wh{y}(z,t)\xi(z)$. It's easy to see that $\bb{y}$ solves the following equation with homogeneous Dirichlet boundary condition on $B^{n-1}_2(0)$,
		\begin{equation}\label{equation_why}
		\begin{cases}	
		\pa_t \bb{y} = m\na\cdot[\wh A \na \bb{y}] + \wh b \cdot \na \bb{y} + \bb g(z,t), &z\in B^{n-1}_2(0), \; t\in(0,T),\\
		\bb{y}(z,t) = 0, &z\in\pa B^{n-1}_2(0), \, t\in (0,T),\\
		\bb{y}(z,0) = 0, &z\in B^{n-1}_2(0)
		\end{cases}
		\end{equation}
		where
		\begin{align*}
		\bb{g}(z,t) &= -2m[\na \wh{y}(z,t)]^{\top}\wh{A}(z,t)\na \xi(z)  - m\wh{y}(z,t)\na\cdot[\wh{A}(z,t)\na \xi(z)]\\
		&\qquad - \wh{y}(z,t)\wh{b}(z,t)\cdot \na \xi(z) + \xi(z)\wh{f}(z,t).
		\end{align*}
		Note that the ``external term" $\bb{g}$ depends on $\wh y$ and $\na \wh y$.  Denote by $Q_T:= B^{n-1}_2(0)\times (0,T)$. Thanks to properties of $\wh{A}$ and $\wh{b}$ and the fact that $\xi$ is smooth, we have
		\begin{equation*}
		\|\bb{g}\|_{L^p(Q_T)}\leq C_T\left(\|\wh{y}\|_{L^p(Q_T)} + \|\na \wh{y}\|_{L^p(Q_T)} + \|\wh{f}\|_{L^p(Q_T)} \right).
		\end{equation*}
		We will use the interpolation inequality: for any $\epsilon > 0$, there exists $C_\epsilon>0$ such that
		\begin{equation}\label{interpolation}
		\|\na \wh{y}\|_{L^p(B^{n-1}_2(0))} \leq \epsilon\|\wh y\|_{L^p(B^{n-1}_2(0))} + C_\epsilon\|\na^2\wh{y}\|_{L^p(B^{n-1}_2(0))}.
		\end{equation}
		Indeed, by Gagliardo-Nirenberg's inequality
		\begin{equation*}
		\|\na \wh y\|_{L^p(B^{n-1}_2(0))} \leq C\|\na^2 \wh y\|_{L^p(B^{n-1}_2(0))}^{1/2}\|\wh y\|_{L^p(B^{n-1}_2(0))}^{1/2} + C\|\wh y\|_{L^p(B^{n-1}_2(0))}.
		\end{equation*}
		An application of the Young inequality gives immediately \eqref{interpolation}. It then follows that
		\begin{equation*}
		\|\bb{g}\|_{L^p(Q_T)}\leq C_T\left(\|\wh{y}\|_{L^p(Q_T)} + \epsilon\|\na^2 \wh{y}\|_{L^p(Q_T)} + \|\wh{f}\|_{L^p(Q_T)} \right).
		\end{equation*}
		We now apply the $L^p$-maximal regularity to the equation \eqref{equation_why}, see e.g. \cite[Theorem IV.9.1]{LSU68}, to get
		\begin{equation}\label{h1}
		\|\bb{y}\|_{W^{2,1}_p(Q_T)} \leq C_T\|\bb{g}\|_{L^p(Q_T)} \leq C_T(\|\wh{y}\|_{L^p(Q_T)} + \|\wh{f}\|_{L^p(Q_T)}) + \varepsilon \|\na^2 \wh{y}\|_{L^p(Q_T)}.
		\end{equation}

		\medskip
		\noindent\underline{Step 3. Going back to $\Gamma_0\times (0,T)$ and gluing.} We have $\eta = \wh{y}$ on $B_1(0)$, and thus from \eqref{A5} we immediately obtain
		\begin{equation}\label{ee3}
		\|\wh{y}\|_{W_p^{2,1}(B_1(0)\times (0,T))} \leq \|\bb{y}\|_{W^{2,1}_p(Q_T)} \leq C_T\left(\|\wh y\|_{L^p(Q_T)} + \|\wh{f}\|_{L^p(Q_T)} \right) + \varepsilon\|\na^2 \wh{y}\|_{L^p(Q_T)}.
		\end{equation}
		Now define $\wt{U}_{x_0} = \varphi(B_1(0))\subset U_{x_0}$ so that we have the bijection $\varphi: B_1(0)\to \wt{U}_{x_0}$. We reuse the notation $c^{ij} = (D\varphi)_{ij}^{-1}$. Recall that $\wh y = \wt y \circ \varphi$ and $\wh{f} = \wt f\circ \varphi$.
		
		\medskip
		\noindent\textbf{Estimate the left hand side of \eqref{ee3}}: For the terms on the left hand side, we note that, for the function $\wt{y}$, we have
		\begin{equation*}
		\|\wt y\|_{L^p(\wt U_{x_0}\times(0,T)}^p = \int_0^T\int_{B_1(0)}|\wh y|^p|\det D\varphi| \leq C_\varphi\|\wh y\|_{L^p(B_1(0)\times(0,T))}^p,
		\end{equation*}
		for the first spatial derivatives we have
		\begin{align*}
		\|\uD_i \wt{y}\|_{L^p(\wt U_{x_0}\times(0,T))}^p &= \int_0^T\int_{B_1(0)}|c^{ij}D_j\wh y|^d|\det D\varphi|\\
		&\leq C_{\varphi,c}\sum_{j=1}^{n}\int_0^T\int_{B_1(0)}|D_j\wh y|^p \leq C_{\varphi,c,n}\|\na \wh y\|_{L^p(B_1(0)\times (0,T))}^p
		\end{align*}
		and for the second derivatives, recalling
		\begin{equation}\label{ee4}
		(\uD_j\uD_i)\circ \varphi = c^{jk}D_k(\uD_i\wt y \circ \varphi) = c^{jk}D_k(c^{il}D_l\wh y) = c^{jk}D_kc^{il}D_l\wh{y} + c^{jk}c^{il}D_kD_l\wh{y},
		\end{equation}
		we get
		\begin{align*}
		\|\uD_j\uD_i \wt y\|_{L^p(\wt U_{x_0}\times (0,T))}^p &= \int_0^T\int_{B_1(0)}\left(|c^{jk}D_kc^{il}D_l\wh y|^p + |c^{jk}c^{il}D_kD_l\wh y|^p \right)|\det D\varphi|\\
		&\le C_{\varphi,c,n}\left(\|\na \wh y\|_{L^p(B_1(0)\times(0,T))}^p + \|\na^2\wh y\|_{L^p(B_1(0)\times(0,T))}^p \right).
		\end{align*}
		Note that $\wt y$ solves \eqref{ee1}, it follows from the above that 
		\begin{align*}
		\|\pa_t \wt y\|_{L^p(\wt U_{x_0}\times(0,T))} &\leq \|\wh f\|_{L^p(B_1(0)\times(0,T))} + C_{\Phi,G}\|\wt y\|_{W_p^{2,1}(\wt U_{x_0}\times(0,T))}\\
		& \leq \|\wh f\|_{L^p(B_1(0)\times(0,T))} +C_{\Phi,G,\varphi,c,n}\|\wh y\|_{W^{2,1}_p(B_1(0)\times(0,T))},
		\end{align*}
		and so we have, from \eqref{ee3}, 
		\begin{equation}\label{ee5}
		\begin{aligned}
		\|\wt y\|_{W_p^{2,1}(\wt U_{x_0}\times(0,T))} &\leq \leq \|\wh f\|_{L^p(B_1(0)\times(0,T))} + C_{\Phi,G,\varphi,c,n}\|\wh y\|_{W^{2,1}_p(B_1(0)\times(0,T))}\\
		&\leq \|\wh f\|_{L^p(Q_T)} + C_{\Phi,G,\varphi,c,n}\|\wh y\|_{W^{2,1}_p(Q_T)}\\
		&\leq C_{T,\Phi,G,\varphi,c,n}\left(\|\wh y\|_{L^p(Q_T)} + \|\wh f\|_{L^p(Q_T)} \right) + \varepsilon'\|\na^2 \wh{y}\|_{L^p(Q_T)}.
		\end{aligned}
		\end{equation}

		\medskip
		\noindent\textbf{Estimate the right hand side \eqref{ee3}:} We have
		\begin{equation}\label{ee6}
		\|\wh y\|_{L^p(Q_T)}^p = \int_0^T\int_{U_{x_0}}|\wt y|^p|\det D\varphi^{-1}| \leq C_{\varphi}\|\wt y\|_{L^p(U_{x_0}\times(0,T))}^p,
		\end{equation}
		and similarly
		\begin{equation}\label{ee7}
		\|\wh f\|_{L^p(Q_T)}^p \leq C_{\varphi}\|\wt f\|_{L^p(U_{x_0}\times(0,T))}^p.
		\end{equation}
		It follows from \eqref{ee4} that
		\begin{equation*}
		D_kD_l\wh{y} = c_{jk}c_{il}(\uD_j\uD_i\wt y)\circ \varphi - c_{il}D_kc^{il}D_l\wh{y}
		\end{equation*}
		which then gives by integration
		\begin{equation*}
		\|D_kD_l\wh y\|_{L^p(Q_T)} \leq C_{\varphi,c,n}\left(\|\na_{\Gamma}^2\wt y\|_{L^p(\wt U_{x_0}\times(0,T))} + \|\na \wh y\|_{L^p(Q_T)}\right)
		\end{equation*}
		implying
		\begin{equation}\label{ee8}
		\|\na^2 \wh y\|_{L^p(Q_T)} \leq C_{\varphi,c,n}\left(\|\na^2_{\Gamma}\wt y\|_{L^p(\wt U_{x_0}\times(0,T))} + \|\na_{\Gamma}\wt y\|_{L^p(\wt U_{x_0}\times(0,T))} \right).
		\end{equation}
		
		\medskip
		\noindent Combining \eqref{ee5}--\eqref{ee8} leads  to 
		\begin{equation}\label{ee8_1}
		\begin{aligned}
		\|\wt y\|_{W_p^{2,1}(\wt U_{x_0}\times(0,T))} &\leq C\left(\|\wt y\|_{L^p(U_{x_0}\times (0,T))} + \|\wt f\|_{L^p(U_{x_0}\times(0,T))} \right)\\
		&\quad + \varepsilon'\|\na \wt y\|_{L^p(U_{x_0}\times(0,T))} + \varepsilon'\|\na^2 \wt y\|_{L^p(U_{x_0}\times(0,T))},
		\end{aligned}
		\end{equation}
		where both $C$ and $\varepsilon'$ depend on $T, \Phi, G, \varphi, c, n$.
		
		\medskip
		\noindent{\underline{Step 4: Push-forward estimates to the moving surface.}} We start by analysing the right hand side of \eqref{ee8_1}. We have
		\begin{equation*}
		\|\wt y\|_{L^p(\Gamma_0\times(0,T))}^p = \int_0^T\int_{\Gamma_0}|y(\Phi_t(x),t)|^p = \int_0^T\int_{\Gamma(t)}|y(x,t)|^p|\det D\Phi_t^{-1}| \leq C_{\Phi}\|y\|_{L_{L^p(\Gamma)}^p}^p,
		\end{equation*}
		and the same estimate holds for $f$, from where we obtain
		\begin{equation}\label{ee9}
		\|\wt y\|_{W_p^{2,1}(\Gamma_0\times(0,T)} \leq C_T(C_{\Phi})^{1/p}\left(\|y\|_{L_{L^p(\Gamma)}^p} + \|f\|_{L_{L^p(\Gamma)}^p} \right).
		\end{equation}
		We now study the quantity $\|\dot y\|_{L^p_{L^p(\Gamma)}} + \sum_{s=0}^2 \|\pa_{x,\Gamma}^sy\|_{L^p_{L^p(\Gamma)}}$.
		\begin{itemize}
			\item Case $s = 0$. We have
			\begin{equation*}
			\|y\|_{L^p_{L^p(\Gamma)}}^p = \int_0^T\int_{\Gamma(t)}|y(x,t)|^p = \int_0^T\int_{\Gamma_0}|\wt y(x,t)|^p|\det D\Phi_t| \leq C_{\Phi}\|\wt y\|_{L^p(\Gamma_0\times(0,T))}^p.
			\end{equation*}
			\item Case $s=1$. As for the first spatial derivatives, we have
			\begin{align*}
			\|\uD_i y\|_{L^p_{L^p(\Gamma)}}^p &= \int_0^T\int_{\Gamma_0}|G^{ij}\uD_j\wt y|^p|\det D\Phi_t|\\
			&\leq C_{\Phi,G}\sum_{j=1}^{n}\int_0^T\int_{\Gamma_0}|\uD_j \wt y|^p \leq C_{\Phi,G,n}\|\na_\Gamma \wt y\|_{L^p(\Gamma_0\times (0,T))}^p.
			\end{align*}
			\item Case $s=2$. Finally, for the second derivatives, we have
			\begin{align*}
			(\uD_j\uD_i y)\circ \Phi_t &= G^{jk}\uD_k(\uD_iy\circ \Phi_t) = G^{jk}\uD_k\left(G^{il}\uD_l\wt y\right)\\ &= G^{jk}\uD_kG^{il}\uD_l\wt{y} + G^{jk}G^{il}\uD_k\uD_l\wt{y},
			\end{align*}
			from which it follows
			\begin{align*}
			\|\uD_j\uD_i y\|_{L^p_{L^p(\Gamma)}}^p &= \int_0^T\int_{\Gamma_0}\left(|G^{jk}\uD_kG^{il}\uD_l\wt y|^p + |G^{jk}G^{il}\uD_k\uD_l\wt y|^p \right)|\det \Phi_t|\\
			&\le C_{\Phi,G,n}\left(\|\na_{\Gamma}\wt y\|_{L^p(\Gamma_0\times(0,T))}^p + \|\na^2_{\Gamma}\wt y\|_{L^p(\Gamma_0\times(0,T))}^p \right).
			\end{align*}
			\item For the time derivative, by noting that $y$ solves \eqref{evol} it follows from the above that
			\begin{equation*}
			\|\dot y\|_{L^p_{L^p(\Gamma)}} \leq \|f\|_{L^p_{L^p(\Gamma)}} + C_{\Phi,G,n,\VV}\|\wt y\|_{W^{2,1}_p(\Gamma_0\times(0,T))}.
			\end{equation*}
		\end{itemize}
		Combining all the different cases above with \eqref{ee9} then gives
		\begin{equation*}
		\|\dot y\|_{L^p_{L^p(\Gamma)}} + \sum_{s=0}^2 \|\pa_{x,\Gamma}^sy\|_{L^p_{L^p(\Gamma)}}\leq C_{\Phi,G,n,\VV}\|\wt y\|_{W^{2,1}_p(\Gamma_0\times(0,T))} \leq C_{T,\Phi,G,n,\VV}\left(\|y\|_{L^p_{L^p(\Gamma)}} + \|f\|_{L^p_{L^p(\Gamma)}} \right),
		\end{equation*}
		as desired.
	\end{proof}	
	\subsection{Duality methods in evolving surfaces}\label{subs:duality}
	In this subsection we establish a duality lemma to establish $L^2_{L^2}$-boundedness of $w$ and $z$.	 Such results are well known in fixed domains, see e.g. \cite{pierre2010global}. Here we prove that it also holds in moving interfaces. We need the following differentiation formulas whose derivations are standard and therefore omitted.  
	
	\newcommand{\tgrad}{\nabla_\Gamma}
	\newcommand{\md}{\partial^\bullet_\Gamma}
	
	\begin{lemma}\label{lem:derivinnerprod}
		For appropriate $u, v$ the following formulas hold:
		\begin{align}
		\dfrac{d}{dt} \int_{\O(t)} u v &= \int_{\O(t)} \dot u \, v + \int_{\O(t)} u \, \dot v + \int_{\O(t)} u \, v \, \nabla \cdot \mathbf V_p \label{eq:transp1} \\
		\dfrac{d}{dt} \intG u v &= \intG \dot u \, v + \intG u \, \dot v + \intG u \, v \, \nabla \cdot \mathbf V_p \label{eq:transp2} \\
		\dfrac{d}{dt} \intG \naG u \cdot \naG v &= \intG \naG \dot u \cdot \naG v + \intG \naG u \cdot \naG \dot v + \intG \mathbf B(\mathbf V_p) \naG u \cdot \naG v \label{eq:transp3}
		\end{align}
		where, for a vector field $v=(v_1, v_2, v_3)$, $\mathbf{B}(v)= (\nabla_\Gamma \cdot v)I - 2D(v)$ and the matrix $D(v)=(D_{ij}(v))_{i,j=1}^{3}$ is defined by
		\begin{align*}
		D_{ij}(v) = \dfrac{1}{2}\sum_{k=1}^{3} \delta_{ik} \underline{D}_k v_j + \delta_{jk} \underline{D}_k v_i  = \dfrac{\underline{D}_iv_j + \underline{D}_jv_i}{2}.
		\end{align*}
	\end{lemma}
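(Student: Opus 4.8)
The plan is to deduce all three identities from the two elementary transport (Reynolds) theorems for evolving bulk domains and for evolving hypersurfaces, together with the Leibniz rule for material derivatives; the only genuinely new ingredient is a commutator identity between $\partial_\Gamma^\bullet$ and $\naG$ needed for \eqref{eq:transp3}. For \eqref{eq:transp1}, I would pull back via the flow $\Phi_t$ of \eqref{Phi1}--\eqref{Phi2}: writing $J_t := \det D\Phi_t > 0$ we have $\intO uv = \int_{\O_0} (uv)(\Phi_t(x),t)\,J_t(x)\,dx$. Differentiating under the integral sign, Jacobi's formula combined with $\frac{d}{dt}\Phi_t = \mathbf{V}_p(t,\Phi_t)$ from \eqref{Phi2} gives $\partial_t J_t = J_t\big((\na\cdot\mathbf{V}_p)\circ\Phi_t\big)$, while the chain rule gives $\partial_t\big[(uv)(\Phi_t(x),t)\big] = \big(\partial_\Omega^\bullet(uv)\big)(\Phi_t(x),t)$ with $\partial_\Omega^\bullet = \partial_t + \mathbf{V}_p\cdot\na$. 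As $\partial_\Omega^\bullet$ is a first order operator it obeys $\partial_\Omega^\bullet(uv) = \dot u\,v + u\,\dot v$; pushing the resulting identity forward to $\O(t)$ yields \eqref{eq:transp1}.

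Identity \eqref{eq:transp2} follows by the same argument on the reference surface $\Gamma_0$, using the tangential flow $\Phi_t\colon\Gamma_0\to\Gamma(t)$ and the surface area element $\mu_t$. The surface counterpart of Jacobi's formula, $\partial_t\mu_t = \mu_t\big((\naG\cdot\mathbf{V}_p)\circ\Phi_t\big)$, produces the tangential divergence, so that $\frac{d}{dt}\intG\phi = \intG\partial_\Gamma^\bullet\phi + \intG\phi\,\naG\cdot\mathbf{V}_p$; taking $\phi = uv$ and applying the Leibniz rule for $\partial_\Gamma^\bullet$ gives \eqref{eq:transp2}.

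For \eqref{eq:transp3} I would apply the surface transport formula to $\phi = \naG u\cdot\naG v = \uD_k u\,\uD_k v$. The key step is the commutator rule between $\partial_\Gamma^\bullet$ and the tangential derivatives, obtained by differentiating $\naG f = P\,\na\hat f$ in time and expressing $\partial_\Gamma^\bullet P$ (equivalently $\partial_\Gamma^\bullet\nu$) in terms of $\naG\mathbf{V}_p$ and the normal velocity; it has the form $\partial_\Gamma^\bullet(\uD_j f) = \uD_j(\partial_\Gamma^\bullet f) - (\uD_j(\mathbf{V}_p)_k)\,\uD_k f + \nu_j\,(\nu\cdot\uD_l\mathbf{V}_p)\,\uD_l f$. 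Inserting this into $\partial_\Gamma^\bullet(\uD_k u\,\uD_k v) = \partial_\Gamma^\bullet(\uD_k u)\,\uD_k v + \uD_k u\,\partial_\Gamma^\bullet(\uD_k v)$, the normal correction terms carry a factor $\nu_k\uD_k v$ or $\nu_k\uD_k u$ and therefore vanish, because $\naG u,\naG v$ are tangential; what remains is $\naG(\partial_\Gamma^\bullet u)\cdot\naG v + \naG u\cdot\naG(\partial_\Gamma^\bullet v) - (\uD_k(\mathbf{V}_p)_l)\big(\uD_l u\,\uD_k v + \uD_k u\,\uD_l v\big)$. A relabelling of dummy indices symmetrises $\uD_k(\mathbf{V}_p)_l$ into $\frac{1}{2}\big(\uD_k(\mathbf{V}_p)_l + \uD_l(\mathbf{V}_p)_k\big) = D_{kl}(\mathbf{V}_p)$, so the last bracket equals $2\,D(\mathbf{V}_p)\naG u\cdot\naG v$. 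Adding the area-element term $(\naG u\cdot\naG v)\,\naG\cdot\mathbf{V}_p$ from the surface transport formula, the quadratic contributions collapse to $\big[(\naG\cdot\mathbf{V}_p)I - 2D(\mathbf{V}_p)\big]\naG u\cdot\naG v = \mathbf{B}(\mathbf{V}_p)\naG u\cdot\naG v$, which is precisely \eqref{eq:transp3}.

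The main obstacle is the commutator rule $\partial_\Gamma^\bullet\circ\naG$ versus $\naG\circ\partial_\Gamma^\bullet$ together with the verification that its normal component drops out under contraction with tangential vectors; everything else is routine bookkeeping with change of variables and Jacobi's formula. This is exactly why the paper can call the derivations standard: they are classical surface-calculus computations, of the same nature as those already invoked for Lemma \ref{max-regular} (see \cite{elliott2015time} and the references therein), the only care being to track the automatic symmetrisation $\uD_k(\mathbf{V}_p)_l\mapsto D_{kl}(\mathbf{V}_p)$ that occurs when contracting against $\naG u\otimes\naG v$.
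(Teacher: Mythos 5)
Your derivation is correct and coincides with the standard surface-transport argument that the paper itself omits, deferring to \cite[Appendix A]{elliott2007}: the pullback-plus-Jacobi computation for \eqref{eq:transp1} and \eqref{eq:transp2}, and for \eqref{eq:transp3} the commutator between $\partial_\Gamma^\bullet$ and $\naG$ whose normal correction dies upon contraction with the tangential vectors $\naG u$, $\naG v$ and whose tangential part symmetrises to $-2D(\mathbf V_p)$, are exactly the computations in that reference. One minor point: the divergence in \eqref{eq:transp2} must be read as the tangential divergence $\naG\cdot\mathbf V_p$ (as you use, and as the paper does in every subsequent application of the formula), the $\nabla\cdot$ in the displayed statement being a typo.
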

	
	\begin{remark}
		The identities in \eqref{eq:transp1}, \eqref{eq:transp2} are well known; the extra term arises from the fact that the domains of integration are themselves time-dependent. Indeed, $\tgrad\cdot\mathbf V_p$ is a curvature term and accounts for local stretching/compression of the domains. As for \eqref{eq:transp3}, the tensor $\mathbf{B}(\mathbf V_p)$ has two components; the first involves $\tgrad\cdot\mathbf V_p$ and arises as in \eqref{eq:transp1}, \eqref{eq:transp2}, and the second term, the deformation tensor $D(\mathbf V_p)$, shows up because the gradient operator itself is time-dependent. We refer for instance to \cite[Appendix A]{elliott2007} for a detailed proof of the result above. 
	\end{remark}
	
	The following duality lemma shows that $w, z$ are bounded in $L^2_{L^2(\Gamma)}$ if they satisfy the inequality \eqref{sum_wz}.
	
	\begin{lemma}\label{duality-L2}
		Assume that $w, z$ are nonnegative and satisfy \eqref{sum_wz}. 
		Then
		\begin{equation*}
		\|w\|_{L^2_{L^2(\GG)}} + \|z\|_{L^2_{L^2(\GG)}} \leq C(T, \|w_0\|_{L^2(\GG_0)}, \|z_0\|_{L^2(\GG_0)}).
		\end{equation*}
	\end{lemma}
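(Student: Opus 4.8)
The plan is to set up a duality argument in the spirit of Pierre's $L^2$ duality estimate, but carried out directly on the evolving surface $\Gamma(t)$. Let $S:=w+z\ge 0$. Write $\sigma:=S^{-1}(\dG w+\dGG z)$ wherever $S>0$, so that $\delta_*\le\sigma\le\delta^*$ with $\delta_*=\min\{\dG,\dGG\}>0$ and $\delta^*=\max\{\dG,\dGG\}$; then \eqref{sum_wz} reads, in the sense of distributions on $\mathcal G_T$,
\begin{equation*}
\dot S + S\,\naG\cdot\VV_p - \Delta_\GG(\sigma S) + \naG\cdot(\J_\GG S) \le 0 .
\end{equation*}
The idea is to test this against the solution $\phi$ of a suitable \emph{backward} dual problem with a nonnegative source, use the maximal regularity of Lemma~\ref{max-regular} (in the backward form of Remark~\ref{backward}) to control $\phi$, and thereby bound $\|S\|_{L^2_{L^2(\GG)}}$, which immediately gives the claim since $0\le w,z\le S$.

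First I would fix an arbitrary $0\le \Theta\in L^2_{L^2(\GG)}$ and let $\phi$ solve the backward equation
\begin{equation*}
\dot\phi + \sigma\,\Delta_\GG\phi + \naG\phi\cdot\VV_\GG = -\Theta \ \text{ on }\GG(t), \qquad \phi(T)=0 ,
\end{equation*}
noting that since $\sigma$ is only bounded measurable, one should first prove the estimate for smooth coefficients $\sigma_n\to\sigma$ (mollified, still valued in $[\delta_*,\delta^*]$) and then pass to the limit; for each fixed smooth $\sigma_n$, Lemma~\ref{max-regular}/Remark~\ref{backward} applies and yields $\phi\in L^2_{W^{2,1}}$ with $\dot\phi,\Delta_\GG\phi\in L^2_{L^2(\GG)}$ — though the bound there depends on $m$, so care is needed: I would instead obtain the key estimate by the energy method, multiplying the dual equation by $-\Delta_\GG\phi$ and by $\phi$ and integrating, using the transport identities of Lemma~\ref{lem:derivinnerprod} to handle the time-dependence of $\GG(t)$ and of $\naG$, and using $\sigma\ge\delta_*$ to extract the parabolic term. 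This produces an a priori bound $\|\naG\phi\|_{L^\infty_{L^2(\GG)}}^2 + \|\Delta_\GG\phi\|_{L^2_{L^2(\GG)}}^2 \le C(T)\|\Theta\|_{L^2_{L^2(\GG)}}^2$, with $C(T)$ independent of $n$, where the advection term $\naG\phi\cdot\VV_\GG$ and the curvature terms $\naG\cdot\VV_p$, $\mathbf B(\VV_p)$ are absorbed via Cauchy--Schwarz and Gronwall thanks to \eqref{Phi1}--\eqref{Phi2}.

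Next I would pair the differential inequality for $S$ with the nonnegative function $\phi\ge 0$ (nonnegativity of $\phi$ follows from the maximum principle for the backward problem with source $-\Theta\le 0$ and zero final data — Lemma~\ref{comparison-surface} applied to the time-reversed equation). Using the weak formulation together with \eqref{eq:transp2} to integrate by parts in time, the terms combine so that
\begin{equation*}
\int_0^T\intG S\,\Theta \ \le\ \intG[\,at\ t=0\,] S(0)\,\phi(0) \ =\ \intG (w_0+z_0)\,\phi(0),
\end{equation*}
after the bulk terms cancel by the choice of dual equation (the $\naG\cdot(\J_\GG S)$ term pairs with $\naG\phi$ and is controlled since $\J_\GG$ is bounded and $\naG\phi\in L^\infty_{L^2(\GG)}$; in fact one may absorb it into the dual equation's drift). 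It remains to bound $\|\phi(0)\|_{L^2(\GG_0)}$ by $C(T)\|\Theta\|_{L^2_{L^2(\GG)}}$, which comes from the energy estimate above (integrating $\tfrac{d}{dt}\intG\phi^2$ backwards from $T$). Hence $\int_0^T\intG S\,\Theta \le C(T,\|w_0\|_{L^2(\GG_0)},\|z_0\|_{L^2(\GG_0)})\|\Theta\|_{L^2_{L^2(\GG)}}$, and taking the supremum over $\Theta$ gives $\|S\|_{L^2_{L^2(\GG)}}\le C(T,\|w_0\|_{L^2(\GG_0)},\|z_0\|_{L^2(\GG_0)})$; since $0\le w\le S$ and $0\le z\le S$ the lemma follows.

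The main obstacle is handling the non-smooth, merely bounded coefficient $\sigma=(\dG w+\dGG z)/(w+z)$ and the fact that $\Delta_\GG(\sigma S)$ appears rather than $\sigma\Delta_\GG S$: the duality argument must be arranged so that the dual operator carries $\sigma$ \emph{inside} the second-order term (i.e. the dual equation has the form $\dot\phi+\sigma\Delta_\GG\phi+\dots$), and one must justify the regularization $\sigma_n\to\sigma$ together with the weak-limit passage in the pairing $\intG S\,\Theta$. The second delicate point is that Lemma~\ref{max-regular} gives constants depending on the diffusivity $m$; to keep uniformity in the approximation one should rely on the energy estimates (which only use $\delta_*\le\sigma_n\le\delta^*$) rather than on maximal regularity for the crucial bound, reserving Lemma~\ref{max-regular} only to guarantee enough regularity of each $\phi_n$ to legitimize the integrations by parts and the transport formulas of Lemma~\ref{lem:derivinnerprod}.
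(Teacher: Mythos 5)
Your proposal is correct and follows essentially the same route as the paper's proof: set $Z=w+z$, place the bounded coefficient $A=(\dG w+\dGG z)/(w+z)$ in non-divergence form in a backward dual equation with source $\Theta\ge 0$, derive the key bound on $\|\naG\varphi\|_{L^\infty_{L^2(\GG)}}$, $\|\Delta_\GG\varphi\|_{L^2_{L^2(\GG)}}$ and hence $\|\varphi(0)\|_{L^2(\GG_0)}$ by testing with $-\Delta_\GG\varphi$ and using the transport identities of Lemma \ref{lem:derivinnerprod} together with Gronwall, and conclude by duality. Your additional remarks (mollifying the merely measurable coefficient to justify existence/regularity of the dual solution, and relying on the energy estimate rather than Lemma \ref{max-regular} to keep constants uniform) are sensible refinements of details the paper leaves implicit, not a different method.
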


	\begin{proof}
		Setting $Z = w + z$ and $A(x,t) = (\dG w + \dGG z)/(w+z)$ we have
		\begin{equation*}
		0< \min\{\dG,\dGG\} \leq A(x,t) \leq \max\{\dG,\dGG\} <+\infty.
		\end{equation*}
		From \eqref{sum_wz} we have
		\begin{equation*}
		\dot{Z} + Z\naG \cdot \VV_p - \Delta_\GG(A(x,t)Z) + \naG\cdot(\J_\GG Z) \leq 0
		\end{equation*}
		or equivalently
		\begin{equation}\label{eq_Z}
		\dot Z - \Delta_\GG (A(x,t)Z) + \naG\cdot(\VV_\GG Z) \leq 0.
		\end{equation}
		Let $\Theta \geq 0$ and $\varphi$ be the nonnegative solution to the dual equation
		\begin{equation}\label{dual}
		\begin{cases}
		-(\dot \varphi + A(x,t)\Delta_\GG \varphi + \naG \varphi\cdot \VV_\GG) = \Theta, &\text{ in } \GG(t),\\
		\varphi(x,T) = 0, &\text{ in } \GG(T).
		\end{cases}
		\end{equation}
		By using integration by parts we have
		\begin{equation}\label{d1}
		\begin{aligned}
		\intT\intG Z\Theta &= -\intT\intG Z(\dot \varphi + A(x,t)\Delta_\GG \varphi + \naG \varphi\cdot \VV_\GG)\\
		&=\int_{\GG_0}Z(0)\varphi(0) + \intT\intG\varphi(\dot Z - \Delta_\GG(A(x,t)Z) + \naG\cdot(\VV_\GG Z))\\
		& \leq \int_{\GG_0}Z(0)\varphi(0) \overset{\text{H\"older ineq.}}{\leq} \|Z(0)\|_{L^2(\GG_0)}\|\varphi(0)\|_{L^2(\GG_0)}.
		\end{aligned}
		\end{equation}
		We now aim to estimate $\|\varphi(0)\|_{L^2(\GG_0)}$. By changing time variable $t \to T - t$ we rewrite the dual equation \eqref{dual} as
		\begin{equation} \label{dual_re}
		\dot \varphi - A(x,t)\Delta_\GG \varphi - \naG\varphi \cdot \VV_\GG = \Theta \text{ in } \GG(t), \qquad \varphi(x,0) = 0 \text{ in } \GG_0.
		\end{equation}
		Multiplying \eqref{dual_re} by $-\Delta_\GG \varphi$ and using Lemma \ref{lem:derivinnerprod} yields 
		\begin{equation}\label{d2}
		\begin{aligned}
		\frac 12 \frac{d}{dt}\intG |\naG \varphi|^2 &+ \intG A|\Delta_\GG \varphi|^2= - \intG \Delta_\GG\varphi \naG \varphi\cdot \VV_\GG  - \intG \Theta \Delta_\GG \varphi {+ \intG \mathbf B(\mathbf V_p) |\naG \varphi|^2}
		\end{aligned}
		\end{equation}
		Since $A(x,t) \geq a:= \min\{\dG,\dGG \} > 0$ we have
		\begin{equation*}
		\intG A|\Delta_\GG\varphi|^2 \geq a\intG|\Delta_\GG \varphi|^2.
		\end{equation*}
		By Young's inequality
		\begin{equation*}
		\left|\intG \Delta_\GG\varphi \naG \varphi\cdot \VV_\GG \right| \leq \|\VV_\GG\|_\infty\intG |\Delta_\GG \varphi||\naG \varphi| \leq \frac{a}{4}\intG|\Delta_\GG \varphi|^2 + C\intG|\naG \varphi|^2,
		\end{equation*}
		similarly
		\begin{equation*}
		\left|\intG \Theta \Delta_\GG \varphi \right| \leq \frac{a}{4}\intG|\Delta_\GG \varphi|^2 + C\intG|\Theta|^2
		\end{equation*}
		{and for the last term we obtain
			\begin{equation*}
			\intG \mathbf B(\mathbf V_p) |\naG \varphi|^2 \leq C \intG |\naG \varphi|^2.
			\end{equation*}
		} Hence, we get from \eqref{d2} that
		\begin{equation}\label{d3}
		\frac{d}{dt}\intG|\naG\varphi|^2 + a\intG|\Delta_\GG\varphi|^2 \leq C\intG|\naG\varphi|^2 + C\intG|\Theta|^2.
		\end{equation}
		Integrating over $(0,t)$ yields  in particular, recalling $\varphi(x,0) = 0$,
		\begin{equation*}
		\|\naG\varphi(t)\|_{L^2(\GG(t))}^2 \leq C\int_0^t\|\naG\varphi(s)\|_{L^2(\GG(s))}^2 + C\|\Theta\|_{L^2_{L^2(\GG)}}^2.
		\end{equation*}
		By Gronwall's inequality
		\begin{equation*}
		\|\naG\varphi\|_{L^\infty_{L^2(\GG)}}^2 \leq C(T)\|\Theta\|_{L^2_{L^2(\GG)}}^2.
		\end{equation*}
		Coming back to \eqref{d3} we obtain
		\begin{equation*}
		\|\Delta_\GG \varphi\|_{L^2_{L^2(\GG)}} \leq C(T)\|\Theta\|_{L^2_{L^2(\GG)}}.
		\end{equation*}
		Therefore from \eqref{dual_re} it follows
		\begin{equation*}
		\|\dot \varphi\|_{L^2_{L^2(\GG)}} \leq C(T)\|\Theta\|_{L^2_{L^2(\GG)}}
		\end{equation*}
		and obviously
		\begin{equation*}
		\|\varphi\|_{L^2_{L^2(\GG)}}\leq C(T)\|\Theta\|_{L^2_{L^2(\GG)}}.
		\end{equation*}
		We can now estimate
		\begin{equation*}
		\begin{aligned}
		\|\varphi(0)\|_{L^2(\GG_0)}^2 &= -\int_0^T\pa_t\|\varphi(t)\|_{L^2(\GG(t))}^2\\
		&= -2\int_0^T\int_{\GG(t)}\varphi \dot \varphi  {- \int_0^T\intG \varphi^2 \naG\cdot \mathbf V_p }\\
		&\leq 2 \|\varphi\|_{L^2_{L^2(\GG)}}\|\dot\varphi\|_{L^2_{L^2(\GG)}} { + C \|\varphi\|^2_{L^2_{L^2(\GG)}}}\\
		&\leq C(T)\|\Theta\|_{L^2_{L^2(\GG)}}^2.
		\end{aligned}
		\end{equation*}
		Inserting this into \eqref{d1} leads  to
		\begin{equation*}
		\int_0^T\intG Z\Theta \leq C(T)\|Z(0)\|_{L^2(\GG_0)}\|\Theta\|_{L^2_{L^2(\GG)}}
		\end{equation*}
		for arbitrary $0\leq \Theta \in L^2_{L^2(\GG)}$ and therefore by duality
		\begin{equation*}
		\|Z\|_{L^2_{L^2(\GG)}} \leq C(T)\|w_0 + z_0\|_{L^2(\GG_0)}.
		\end{equation*}
		Recalling $Z = w+z$ and $w,z$ are nonnegative we can finish the proof of Lemma \ref{duality-L2}.
	\end{proof}
	
	The next duality lemma shows that any regularity of $w$ and be transferred to $z$ through the inequality \eqref{sum_wz}. The $L^p$-maximal regularity result in the previous subsection plays an important role in the following proof.
	\begin{lemma}\label{duality-1}
		Assume that $w, z$ are nonnegative and satisfy \eqref{sum_wz}. Then, for any $1<p<\infty$, there exists a constant {$C(T)>0$} such that
		\begin{equation*}
		\|z\|_{L^p_{L^p(\GG)}} \leq {C(T)}(\|z_0+w_0\|_{L^p(\GG_0)} + \|w\|_{L^p_{L^p(\GG)}}).
		\end{equation*}
	\end{lemma}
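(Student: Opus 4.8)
The plan is to run an $L^p$--$L^{p'}$ duality argument for the combined quantity $Z := w+z$, following the scheme of the proof of Lemma~\ref{duality-L2} but replacing the $p=2$ energy estimate for the dual problem by the $L^p$-maximal regularity of Lemma~\ref{max-regular} and Remark~\ref{backward}. The key algebraic point is the identity $\dG w + \dGG z = \dGG Z + (\dG-\dGG)w$, which lets us rewrite \eqref{sum_wz} with a \emph{constant} principal coefficient at the cost of a forcing term involving $w$:
\[
\dot Z + Z\,\naG\cdot\VV_p + \naG\cdot(\J_\GG Z) - \dGG\Delta_\GG Z \;\le\; (\dG-\dGG)\Delta_\GG w .
\]
This is exactly why the statement carries the extra term $\|w\|_{L^p_{L^p(\GG)}}$: in contrast with Lemma~\ref{duality-L2}, the variable-coefficient formulation is not available here, since $L^p$-maximal regularity fails for merely bounded coefficients when $p\neq 2$.

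Let $p' = p/(p-1)$ and fix an arbitrary $0\le\Theta\in L^{p'}_{L^{p'}(\GG)}$. I would introduce the backward dual problem
\[
\dot\varphi + \dGG\Delta_\GG\varphi + \J_\GG\cdot\naG\varphi = -\Theta \ \text{ in } \GG(t), \qquad \varphi(\cdot,T)=0 \ \text{ in } \GG(T).
\]
Since the source $-\Theta\le0$ and the terminal data vanish, the comparison principle (Lemma~\ref{comparison-surface}, after reversing time) gives $\varphi\ge0$. Reversing time, this problem is of the type covered by Lemma~\ref{max-regular}/Remark~\ref{backward} with $m=\dGG$, up to the presence of the bounded smooth drift $\J_\GG$, which is a lower-order perturbation absorbable exactly as the term $\naG y\cdot\VV_\GG$ is treated inside the proof of Lemma~\ref{max-regular}; hence
\[
\|\varphi\|_{L^{p'}_{L^{p'}(\GG)}} + \|\dot\varphi\|_{L^{p'}_{L^{p'}(\GG)}} + \|\Delta_\GG\varphi\|_{L^{p'}_{L^{p'}(\GG)}} \le C(T)\|\Theta\|_{L^{p'}_{L^{p'}(\GG)}}.
\]
To estimate the trace at $t=0$, I would differentiate $t\mapsto\intG|\varphi(t)|^{p'}$ via \eqref{eq:transp2}, integrate over $(0,T)$ using $\varphi(T)=0$, and apply Hölder together with the bounds above — exactly as at the end of the proof of Lemma~\ref{duality-L2} — to get $\|\varphi(0)\|_{L^{p'}(\GG_0)}\le C(T)\|\Theta\|_{L^{p'}_{L^{p'}(\GG)}}$.

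I would then test the weak form of \eqref{sum_wz} against $\varphi\ge0$, integrating the material-derivative terms by parts with \eqref{eq:transp2} and the Laplace--Beltrami and divergence terms by parts in space on the closed surface $\GG(t)$ (no boundary contributions). Using $\dG w + \dGG z = \dGG Z + (\dG-\dGG)w$ and the choice of $\varphi$, the first-order terms in $\naG\cdot\VV_p$ cancel and everything collapses to
\[
\intT\intG Z\,\Theta \;\le\; \intG (w_0+z_0)\,\varphi(0) + (\dG-\dGG)\intT\intG w\,\Delta_\GG\varphi .
\]
By Hölder and the dual estimates, the right-hand side is bounded by $C(T)\big(\|w_0+z_0\|_{L^p(\GG_0)}+\|w\|_{L^p_{L^p(\GG)}}\big)\|\Theta\|_{L^{p'}_{L^{p'}(\GG)}}$. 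Since $Z\ge0$ and $\Theta\ge0$ was arbitrary, duality yields $\|Z\|_{L^p_{L^p(\GG)}}\le C(T)\big(\|w_0+z_0\|_{L^p(\GG_0)}+\|w\|_{L^p_{L^p(\GG)}}\big)$, and the conclusion follows from the pointwise bound $0\le z\le Z$.

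The main obstacle is the dual step and its bookkeeping: one must confirm that the drift $\J_\GG\cdot\naG\varphi$ (together with any bounded zeroth-order term produced by integrating $\naG\cdot(\J_\GG Z)$ by parts against $\varphi$) is genuinely covered by, or absorbable into, Lemma~\ref{max-regular}, and that the weak formulation of \eqref{sum_wz} is strong enough to legitimise moving both spatial derivatives off $w$ onto $\varphi$ in the term $\intT\intG w\,\Delta_\GG\varphi$ — this is the point where only $w\in L^p_{L^p(\GG)}$ (and not $\naG w\in L^p$) is available, so the integration by parts must be carried out entirely in the dual variable. Once these points are settled, the argument is a routine adaptation of the scheme of Lemma~\ref{duality-L2}.
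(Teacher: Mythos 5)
Your proposal is correct and follows essentially the same route as the paper: a duality argument in which the backward dual problem is controlled by the $L^{p'}$-maximal regularity of Lemma \ref{max-regular} (via Remark \ref{backward}), the trace $\|\varphi(0)\|_{L^{p'}(\GG_0)}$ is bounded by differentiating $\intG|\varphi|^{p'}$ and applying H\"older, and the $\|w\|_{L^p_{L^p(\GG)}}$ term arises from the mismatch $\dG\neq\dGG$ after integrating by parts onto the dual solution. The only (cosmetic) difference is that you test with $Z=w+z$ and isolate $(\dG-\dGG)\Delta_\GG w$ as a forcing term, whereas the paper tests with $z$ and moves the entire $w$-operator to the right-hand side before integrating by parts; both lead to the same estimate.
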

	\begin{proof}
		Fix $0\leq  \Phi \in L^{p'}_{L^{p'}(\GG)}$ where $p' = p/(p-1)$, and let $\eta$ be the nonnegative solution to 
		\begin{equation*}
		\begin{aligned}
		-(\dot \eta + \dGG \Delta_\GG \eta + \naG \eta\cdot \VV_\GG) &= \Phi, \text{ on } \GGG_T,\\
		\eta(T) &= 0.
		\end{aligned}
		\end{equation*}
		From \eqref{sum_wz} it follows that
		\begin{equation*}
		\dot z - \dGG\Delta_\GG z + \naG\cdot(\VV_\GG z) \leq -\dot w + \dG\Delta_\GG w - \naG\cdot(\VV_\GG w).
		\end{equation*}
		Therefore, using integration by parts we have
		\begin{equation}\label{l4:e1}
		\begin{aligned}
		\int_{\GGG_T}z\Phi   &= -\int_{\GGG_T}z(\dot \eta + \dGG \Delta_\GG \eta + \naG \eta\cdot \VV_\GG) \\
		&= \int_{\GG_0}z_0\eta(0)  +\int_{\GGG_T}\eta(\dot z - \dGG\Delta_\GG z + \naG\cdot(z\VV_\GG)) \\
		&\leq \int_{\GG_0}z_0\eta(0)  -\int_{\GGG_T}\eta(\dot w - \dG\Delta_\GG w + \naG\cdot(w\VV_\GG)) \\
		&= \int_{\GG_0}(z_0+w_0)\eta(0)  +\int_{\GGG_T}w(\dot\eta + \dG\Delta_\GG \eta + \naG\eta \cdot \VV_\GG) \\
		&= \int_{\GG_0}(z_0+w_0)\eta(0)  -\int_{\GGG_T}w\Phi  \\
		&\leq \|z_0+w_0\|_{L^p(\GG_0)}\|\eta(0)\|_{L^{p'}(\GG_0)} + \|w\|_{L^p_{L^p(\GG)}}\|\Phi\|_{L^{p'}_{L^{p'}(\GG)}}.
		\end{aligned}
		\end{equation}	
		By the maximum regularity in Lemma \ref{max-regular} (or more precisely Remark \ref{backward}) we have
		\begin{equation*}
		\|\eta\|_{L^{p'}_{L^{p'}(\GG)}} + \|\pa_t\eta\|_{L^{p'}_{L^{p'}(\GG)}} + \|\Delta_\GG\eta\|_{L^{p'}_{L^{p'}(\GG)}} \leq {C(T)}\|\Phi\|_{L^{p'}_{L^{p'}(\GG)}}.
		\end{equation*}
		Therefore, by using H\"older inequality,
		\begin{equation*}
		\begin{aligned}
		\|\eta(0)\|_{L^{p'}(\GG_0)}^{p'} &= -\int_0^T\pa_t\int_{\GG(t)}|\eta(t)|^{p'} \\
		&= -p'\int_0^T\intG \eta^{p'-1}\pa_t \eta  \\
		&\leq p'\|\pa_t\eta\|_{L^{p'}_{L^{p'}(\GG)}}\|\eta\|_{L^{p'}_{L^{p'}(\GG)}}^{p'-1}\\
		&\leq {C(T)}\|\Phi\|_{L^{p'}_{L^{p'}(\GG)}}^{p'}.
		\end{aligned}
		\end{equation*}
		Hence, we get from \eqref{l4:e1}
		\begin{equation*}
		\int_{\GGG_T}z\Phi   \leq {C(T)}(\|z_0+w_0\|_{L^p(\GG_0)} + \|w\|_{L^p_{L^{p}(\GG)}})\|\Phi\|_{L^{p'}_{L^{p'}(\GG)}}
		\end{equation*}
		and by duality we obtain the desired estimate.
	\end{proof}
	
	\subsection{Heat regularisation}\label{subs:heat_regularisation}
	This subsection concerns the regularisation effect of the heat operator either in a moving surfaces without boundaries or domains with Neumann boundary conditions. We start with a nonlinear Gronwall inequality.
	\begin{lemma}\label{non-Gron}
		Let $y: \R_+ \to \R_+$ satisfy
		\begin{equation*}
		y' \leq C_1 + \alpha(t)y^{1-r} \quad \text{ for all } \quad t\in (0,T),\\
		\end{equation*}
		for some $r\in (0,1)$ and $\alpha$ is a nonnegative function. Then
		\begin{equation*}
		\sup_{t\in (0,T)}y(t) \leq C_2\left[y(0) + (C_1+1)T + \left(\int_0^T\alpha(t)\right)^{1/r}\right].
		\end{equation*}
	\end{lemma}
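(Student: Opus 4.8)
The plan is to exploit that the nonlinearity is \emph{sublinear} in $y$, since $1-r<1$, and to absorb it into the left-hand side after passing to a supremum. First I would integrate the differential inequality over $(0,t)$, using nonnegativity of $y$ and $\alpha$, to obtain $y(t)\le y(0)+C_1 t+\int_0^t\alpha(s)y(s)^{1-r}\,ds$ for every $t\in[0,T]$. Assuming (as is implicit in the statement) that $y$ is continuous, the quantity $M:=\sup_{[0,T]}y$ is finite; bounding $y(s)^{1-r}\le M^{1-r}$ inside the integral and $t\le T$ gives $y(t)\le y(0)+C_1T+M^{1-r}\int_0^T\alpha$, and taking the supremum over $t\in[0,T]$ yields the purely scalar inequality $M\le y(0)+C_1T+M^{1-r}A$ with $A:=\int_0^T\alpha(t)\,dt$.

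The second step is a Young inequality with conjugate exponents $p=\tfrac1{1-r}$ and $p'=\tfrac1r$ applied to the product $M^{1-r}\cdot A$: for a suitable constant $c(r)$ depending only on $r$ one has $M^{1-r}A\le \tfrac12 M+c(r)A^{1/r}$. Inserting this into $M\le y(0)+C_1T+M^{1-r}A$ and absorbing $\tfrac12 M$ into the left-hand side gives $M\le 2y(0)+2C_1T+2c(r)A^{1/r}$, which is the claimed bound with $C_2:=\max\{2,\,2c(r)\}$ (and using $C_1T\le (C_1+1)T$). The power $1/r$ in the estimate comes out of the exponent $p'=1/r$ in this Young inequality, which is forced by the exponent $1-r$ of $y$.

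\textbf{Main obstacle.} There is no real analytic difficulty here; the only point that deserves care is ensuring that the supremum $M$ is finite \emph{a priori}. If $y$ is only assumed defined on $[0,T)$ and could conceivably blow up as $t\to T^-$, one runs the argument above on each subinterval $[0,\tau]$ with $\tau<T$, obtains $M_\tau:=\sup_{[0,\tau]}y\le C_2\bigl[y(0)+(C_1+1)T+A^{1/r}\bigr]$ with a right-hand side \emph{independent of $\tau$} (since $\tau\le T$ and $\int_0^\tau\alpha\le A$), and then lets $\tau\uparrow T$. The genuinely useful feature of the lemma — compared with what the crude linearization $y^{1-r}\le 1+y$ followed by Grönwall would give, namely an $e^{A}$ factor — is precisely the \emph{polynomial} dependence $A^{1/r}$ on $\int_0^T\alpha$, and this is exactly what the Young-inequality absorption in the second step produces.
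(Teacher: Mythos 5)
Your proof is correct, but it takes a different route from the paper's. The paper linearises the inequality \emph{pointwise in $t$} by writing $y^{1-r}\le \delta y + r\bigl(\tfrac{1-r}{\delta}\bigr)^{(1-r)/r}$ (Young with exponents $\tfrac1{1-r}$ and $\tfrac1r$), applies the standard differential Gronwall lemma to $y'\le \delta\alpha(t)y+\beta(t)$, and only then optimises by choosing $\delta=(\int_0^t\alpha)^{-1}$ so that the exponential factor is at most $e$; the polynomial dependence $(\int_0^T\alpha)^{1/r}$ emerges from the $\delta^{-(1-r)/r}$ in the constant term of the linearisation. You instead integrate first, pass to $M=\sup y$, and apply the same Young inequality to the product $M^{1-r}\cdot\int_0^T\alpha$, absorbing $\tfrac12 M$ into the left-hand side. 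Your argument is more elementary (no Gronwall lemma at all) and even yields the marginally sharper $C_1T$ in place of $(C_1+1)T$, but it needs the a priori finiteness of the supremum, which you correctly handle by exhausting $[0,T)$ with compact subintervals under the (implicit) continuity of $y$; the paper's version sidesteps this issue because Gronwall bounds $y(t)$ for each fixed $t$ directly. Both proofs hinge on the same Young exponents, which is why they produce the identical power $1/r$.
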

	\begin{proof}
		By Young's inequality, for any $\delta >0$,
		\begin{equation*}
		y^{1-r} \leq \delta y + r\left(\frac{1-r}{\delta}\right)^{\frac{1-r}{r}},
		\end{equation*}
		it follows that
		\begin{equation*}
		y' \leq \delta \alpha(t)y + C_1 + r\alpha(t)\left(\frac{1-r}{\delta}\right)^{\frac{1-r}{r}}=: \delta \alpha(t)y + \beta(t).
		\end{equation*}
		By Gronwall's inequality we have
		\begin{equation*}
		y(t) \leq y(0)\exp\left(\delta\int_0^t\alpha(\tau)  \right) + \int_0^t\beta(s)\exp\left(\int_s^t\delta\alpha(\tau)  \right) .
		\end{equation*}
		We choose $\delta = \left(\int_0^t\alpha(\tau) \right)^{-1}$ and thus $\delta \int_s^t\alpha(\tau) \leq 1$ for all $s\in (0,t)$. Therefore,
		\begin{equation*}
		\begin{aligned}
		y(t) &\leq ey(0) + e\int_0^t\beta(s) \\
		&= ey(0) + e\int_0^t\left[C_1 + r(1-r)^{\frac{1-r}{r}}\delta^{\frac{r-1}{r}}\int_0^t\alpha(s) \right]\\
		&\leq C_2\left[y(0) + (C_1+1)T + \left(\int_0^T\alpha(\tau) \right)^{1/r}\right]
		\end{aligned}
		\end{equation*}
		for all $t\in (0,T)$ with some $C_2>0$ independent of $T$, which finishes the proof.
	\end{proof}	
	
	\begin{lemma}\label{heat-regularity}
		Let $f\in L^p_{L^p(\GG)}$ satisfy $\|f\|_{L^p_{L^p(\GG)}} \leq C(T)$ for some $p\in (1,\infty)$. Let $z$ be the solution to 
		\begin{equation*}
		\begin{aligned}
		\dot{z} + z\naG\cdot \VV_p - \delta \Delta_\GG z + \naG\cdot(\J_\GG z) &= f, \quad \text{ on } \quad \GGG_T,\\
		z(0)&= z_0.
		\end{aligned}
		\end{equation*}
		Assume moreover that $z_0 \in L^\infty(\GG_0)$ and $\|z\|_{L^\infty_{L^1(\GG)}} \leq M$ for some constant $M>0$. Then
		\begin{equation*}
		\|z\|_{L^r_{L^r(\GG)}} \leq C(T) \quad \text{ for all } r\in [1,s)
		\end{equation*}
		where 
		\begin{equation*}
		s = \begin{cases}
		<+\infty\; \text{ arbitrary } \quad &\text{ if } \quad p \geq (d+2)/2,\\
		\frac{p(d+2)}{d+2-2p} \quad &\text{ if } \quad p < (d+2)/2.
		\end{cases}
		\end{equation*}
	\end{lemma}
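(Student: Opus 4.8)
### Proof Proposal for Lemma \ref{heat-regularity}

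The plan is to exploit the smoothing effect of the heat operator on the moving surface, using $L^p$-maximal regularity (Lemma \ref{max-regular}) together with the uniform $L^1$-bound and a Gagliardo--Nirenberg interpolation to close the estimate. First I would reduce to a parabolic equation in divergence form without the lower-order transport terms: since $\naG\cdot(\J_\GG z) = \J_\GG\cdot\naG z + z\,\naG\cdot\J_\GG$ and $z\,\naG\cdot\VV_p$ are, by \eqref{Phi1}--\eqref{Phi2}, bounded multiples of $z$ and $\naG z$ with coefficients uniformly bounded on $[0,T]$, these terms can be absorbed into a modified source $\tilde f = f - z\,\naG\cdot\VV_p - \naG\cdot(\J_\GG z)$, provided we can control $\naG z$ in the appropriate norm. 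The cleanest route is therefore an energy-type bootstrap: fix an exponent $q$, multiply the equation by $z^{q-1}$, integrate over $\GG(t)$, use \eqref{eq:transp2} to handle the time derivative of $\int_\GG z^q$, and integrate by parts in the Laplace--Beltrami term to produce $\int_\GG |\naG z^{q/2}|^2$.

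The key steps, in order, would be: (i) Testing with $z^{q-1}$ gives
\[
\frac{1}{q}\frac{d}{dt}\intG z^q + \frac{4\delta(q-1)}{q^2}\intG |\naG z^{q/2}|^2 \leq \intG f z^{q-1} + C\intG z^q + (\text{terms from } \J_\GG),
\]
where the constant $C$ depends only on $\|\naG\cdot\VV_p\|_\infty$, $\|\J_\GG\|_\infty$ and $\|\naG\cdot\J_\GG\|_\infty$, all finite and $T$-dependent by the $C^2$ regularity of the flow; the $\J_\GG\cdot\naG z$ contribution is handled by Young's inequality, absorbing part of it into the gradient term. (ii) Apply the Gagliardo--Nirenberg inequality on $\GG(t)$ (with constants uniform in $t$, since $\GG(t) = \Phi_t(\GG_0)$ with $\Phi_t$ a $C^2$-diffeomorphism) to interpolate $\|z^{q/2}\|_{L^{2\theta}}$ between $\|\naG z^{q/2}\|_{L^2}$ and $\|z^{q/2}\|_{L^{2/q}} = \|z\|_{L^1}^{q/2} \leq M^{q/2}$, the last bound being exactly the given uniform $L^1$ control. (iii) Combine with the H\"older estimate $\int_\GG f z^{q-1} \leq \|f\|_{L^p(\GG)}\|z\|_{L^{p'(q-1)}(\GG)}^{q-1}$ and interpolate the right-hand side norm of $z$ between $L^1$ and $L^{q\cdot(d)/(d-2)}$ (the Sobolev exponent coming from $\naG z^{q/2}\in L^2$), choosing the largest $q$ for which the gradient power produced on the right stays below the $\int_\GG|\naG z^{q/2}|^2$ on the left — this is precisely where the threshold $s = p(d+2)/(d+2-2p)$ (resp. $s$ arbitrary when $p\geq(d+2)/2$) emerges, via the parabolic scaling $L^\infty_t L^1_x \cap L^2_t H^1_x \hookrightarrow L^{(d+2)/d}_{t,x}$. (iv) After absorbing the gradient term, one is left with a differential inequality of the form $y' \leq C_1 + \alpha(t)y^{1-r}$ for $y(t) = \intG z^q$, with $\alpha \in L^1(0,T)$ built from $\|f\|_{L^p}$ and the interpolation constants and $r\in(0,1)$; apply the nonlinear Gronwall inequality, Lemma \ref{non-Gron}, to conclude $\sup_{t}\intG z^q \leq C(T)$, hence $z\in L^q_{L^q(\GG)}$ for every $q<s$.

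The main obstacle I expect is \textbf{bookkeeping the scaling} to pin down exactly the exponent $s$ and to verify that, for $q$ just below the critical value, the power of the gradient term generated on the right-hand side by Gagliardo--Nirenberg is strictly less than $2$ so that it can be absorbed — equivalently, that the fraction of $\int_\GG |\naG z^{q/2}|^2$ appearing with a small coefficient on the right is genuinely $<1$. A secondary technical point is ensuring the Gagliardo--Nirenberg and Sobolev constants on $\GG(t)$ are uniform in $t\in[0,T]$; this follows from pulling back to $\GG_0$ via $\Phi_t$, whose derivatives up to second order are uniformly bounded and bounded away from degeneracy on the compact interval $[0,T]$ (indeed the matrix $G$ is uniformly positive-definite, as noted before the proof of Lemma \ref{max-regular}), so the pulled-back metric is uniformly comparable to the fixed metric on $\GG_0$. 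Once these two points are settled, the rest is the standard De Giorgi--Moser-free bootstrap for parabolic $L^p$-smoothing, and the statement follows. An alternative, arguably shorter, route would bypass the energy estimate entirely: apply $L^p$-maximal regularity (Lemma \ref{max-regular}) directly to get $z\in L^p_{W^{2,p}(\GG)}\cap W^{1,p}_{L^p(\GG)}$ from $\tilde f\in L^p_{L^p(\GG)}$, then use the parabolic embedding of this maximal regularity space into $L^s_{L^s(\GG)}$ with $s$ as stated, combined once more with the $L^1$-bound to handle the case $p$ small via interpolation; I would present whichever of the two is cleaner, but the energy/nonlinear-Gronwall version is the more self-contained given the lemmas already at hand.
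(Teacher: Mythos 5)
Your proposal is correct and follows essentially the same route as the paper: test the equation with $\mu|z|^{\mu-2}z$, use the transport formula of Lemma \ref{lem:derivinnerprod} and integration by parts to produce the dissipation $\intG|\naG(z^{\mu/2})|^2$, anchor the interpolation at the uniform $L^1$ bound via Sobolev embedding, and close with the nonlinear Gronwall inequality (Lemma \ref{non-Gron}) and the parabolic embedding $L^\infty_{L^{\mu}}\cap L^{\mu}_{L^{\mu 2^*/2}}\hookrightarrow L^{\mu(d+2)/d}_{L^{\mu(d+2)/d}}$. The one structural difference is that the paper does not attempt a single near-critical exponent $q$ as you suggest, but runs an explicit bootstrap $p_0=p\to p_1\to\cdots$ with $p_{n+1}$ chosen so that $\|f\|_{L^{p}}$ enters only linearly and pairs by H\"older in time against $\int_0^T\|z\|_{L^{s_n}}^{p_n}$ from the previous stage; this sidesteps exactly the ``bookkeeping'' obstacle you flag, namely that in a one-shot argument the Young exponent needed to absorb the $\|z\|_{L^{q2^*/2}}$ factor forces a power of $\|f\|_{L^p}$ that only remains time-integrable for $q$ strictly below $dp/(d+2-2p)$, which is tight. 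Your alternative via Lemma \ref{max-regular} is not what the paper does here and would need extra care with the nonzero initial datum and the $\naG z$ terms hidden in $\tilde f$.
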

	\begin{proof}
		
		In this proof, for the sake of brevity, we will denote by $\|\cdot\|_{L^p}$ and $\|\cdot\|_{H^1}$ the usual norms of $L^p(\GG(t))$ and $H^1(\GG(t))$ respectively.
		
		\medskip
		\noindent\underline{Step 1. Energy estimates.}	
		Let $\mu>1$. 
		Multiplying the equation of $z$ by $\mu |z|^{\mu - 2}z$ (more precisely, a smooth version of this function and let the regularisation go to zero) we obtain 
		\begin{equation*}
		\mu\intG \left(\dot{z} + z\naG\cdot \VV_p - \delta \Delta_\GG z + \naG\cdot(\J_\GG z)\right) |z|^{\mu-2} z = \mu \intG f z |z|^{\mu -2},
		\end{equation*}		
		Using Lemma \ref{lem:derivinnerprod} we have, for the time derivative term, the estimate
		\begin{equation}
		\mu \intG \dot z \, z \, |z|^{\mu-2} = \dfrac{d}{dt} \intG |z|^{\mu} - \intG |z|^\mu \naG \cdot \mathbf V_p
		\end{equation}
		and for the term involving $\naG \cdot \mathbf V_p$ we get
		\begin{align*}
		\mu \intG z \naG \cdot \mathbf V_p \, z \, |z|^{\mu -2} = \mu \intG |z|^\mu \, \naG \cdot \mathbf V_p.
		\end{align*}
		Combining these two terms gives 
		\begin{align*}
		\mu \intG \dot z \, z \, |z|^{\mu-2} + \mu \intG z \naG \cdot \mathbf V_p \, z \, |z|^{\mu -2} &= \partial_t \intG |z|^{\mu} + (\mu - 1) \intG |z|^\mu \naG \cdot \mathbf V_p \\
		&=\partial_t \intG |z|^{\mu} - \mu (\mu - 1) \intG z |z|^{\mu-2} \naG z \cdot \mathbf V_p,
		\end{align*}	
		For the term with $\mathbf J_\Gamma$ we obtain, using the divergence theorem 
		\begin{align*}
		\mu \intG \naG \cdot (\mathbf J_\Gamma z) z |z|^{\mu -2} &= -\mu(\mu-1) \intG \mathbf (\mathbf J_\Gamma \cdot \naG z) \, z |z|^{\mu -2} \\
		&=-\mu(\mu-1) \intG \mathbf (\mathbf V_\Gamma \cdot \naG z) \, z |z|^{\mu -2} + \mu(\mu-1) \intG \mathbf (\mathbf V_p \cdot \naG z) \, z |z|^{\mu -2}
		\end{align*}
		Finally, for the second order term, we have 
		\begin{align*}
		-\delta \mu \intG (\Delta_\Gamma z) z |z|^{\mu-2} &= \delta \mu \intG \naG z \cdot \naG (z|z|^{\mu-2}) \\
		&= \delta \mu (\mu-1) \intG |z|^{\mu -2} \, |\naG z|^2 \\
		&= \dfrac{4(\mu-1) \delta}{\mu} \intG | \naG \left( |z|^{\mu/2} \right) |^2.
		\end{align*}
		All in all, we have
		\begin{equation}\label{l4:e2}
		\begin{aligned}
		\pa_t\intG |z|^{\mu}  + \frac{4(\mu-1)\delta}{\mu} &\intG |\naG (z^{\mu/2})|^2 \\
		&= \mu(\mu-1) \intG (\mathbf V_\Gamma \cdot \naG z) \, z |z|^{\mu -2} + \mu\intG f|z|^{\mu-2}z  
		\end{aligned}
		\end{equation}
		We estimate the first term on the right hand side of \eqref{l4:e2} as
		\begin{equation}\label{l4:e3}
		\begin{aligned}
		&\mu(\mu-1)\intG (\mathbf V_\Gamma \cdot \naG z) \, z |z|^{\mu -2}  \\
		&\leq \mu(\mu-1){\|\VV_\GG\|_{\infty}}\intG|\naG z||z|^{\mu-1} \\
		&= 2(\mu-1){\|\VV_\GG\|_\infty}\intG|\naG (z^{\mu/2})||z|^{\mu/2} \\
		&\leq \frac{2\delta(\mu-1)}{\mu}\intG|\naG (z^{\mu/2})|^2  + \frac{\mu(\mu-1){\|\VV_\GG\|_\infty}}{2\delta}\intG |z|^{\mu} 
		\end{aligned}
		\end{equation}
		where we used Cauchy-Schwarz's inequality at the last step. For the second term on the right hand side of \eqref{l4:e2} we use H\"older's inequality to get
		\begin{equation}\label{l4:e4}
		\mu\intG f|z|^{\mu-2}z  \leq \mu\|f\|_{L^{p}}\|z\|_{L^{\frac{p(\mu-1)}{p-1}}}^{\mu-1}.
		\end{equation}
		Adding $\frac{2\delta(\mu-1)}{\mu}\intG|z|^\mu  $ to both sides of \eqref{l4:e2} then using \eqref{l4:e3} and \eqref{l4:e4} we obtain
		\begin{equation}\label{l4:e5}
		\pa_t\|z\|_{L^\mu}^\mu +C\|z^{\mu/2}\|_{H^1}^2 \leq {C(T)}\|z\|_{L^\mu}^\mu + C\|f\|_{L^p}\|z\|_{L^{\frac{p(\mu-1)}{p-1}}}^{\mu-1}.
		\end{equation}
		From {Sobolev's embedding} we know that
		\begin{equation*}
		\|z^{\mu/2}\|_{H^1}^2 \geq C_S\|z^{\mu/2}\|_{L^{2^*}}^2 = C_S\|z\|_{L^{\mu 2^*/2}}^{\mu}
		\end{equation*}
		where
		\begin{equation}\label{def-s}
		2^* = \begin{cases}
		<+\infty \quad \text{ arbitrary } \quad \text{ if } \quad d\leq 2,\\
		\frac{2d}{d-2} \quad \text{ if } \quad d\geq 3.
		\end{cases}
		\end{equation}
		Note that we always have $\mu 2^* > \mu$. It then follows from \eqref{l4:e5} that 
		\begin{equation}\label{l4:e6}
		\pa_t\|z\|_{L^\mu}^\mu +C\|z\|_{L^{\mu 2^*/2}}^\mu \leq {C(T)}\|z\|_{L^\mu}^\mu + C\|f\|_{L^p}\|z\|_{L^{\frac{p(\mu-1)}{p-1}}}^{\mu-1}.
		\end{equation}
		Using {interpolation}, the assumption $\|z\|_{L^\infty_{L^1(\GG)}}\leq M$, and Young's inequality we have
		\begin{equation*}
		{C(T)}\|z\|_{L^\mu}^\mu \leq {C(T)}\|z\|_{L^1}^{\theta\mu}\|z\|_{L^{\mu 2^*/2}}^{(1-\theta) \mu} \leq M^{\theta \mu}{C(T)}\|z\|_{L^{\mu 2^*/2}}^{\theta \mu} \leq C_{\varepsilon,M}{C(T)} + \varepsilon\|z\|_{L^{\mu 2^*/2}}^{\mu}
		\end{equation*}
		with
		\begin{equation*}
		\frac{1}{\mu} = \frac{\theta}{1} + \frac{1-\theta}{\mu 2^*/2} \quad \text{ which implies } \quad \theta = \frac{2^*-2}{\mu 2^*-2} \in (0,1).
		\end{equation*}
		By choosing $\varepsilon$ small enough we get from \eqref{l4:e6} the energy estimate
		\begin{equation}\label{l4:e7}
		\pa_t\|z\|_{L^\mu}^\mu +C\|z\|_{L^{\mu 2^*/2}}^\mu \leq {C(T)} + C\|f\|_{L^p}\|z\|_{L^{\frac{p(\mu-1)}{p-1}}}^{\mu-1}.
		\end{equation}
		
		\medskip
		\noindent\underline{Step 2. Initial $\mu = p$.}	
		Denote $p_0:= p$ and let $\mu = p_0$ in \eqref{l4:e7}, we have
		\begin{equation}\label{l4:e8}
		\pa_t\|z\|_{L^{p_0}}^{p_0} + C\|z\|_{L^{s_0}}^{p_0} \leq {C(T)} + C\|f\|_{L^{p_0}}\|z\|_{L^{p_0}}^{p_0-1}
		\end{equation}
		where $s_0 = p_02^*/2$ (see \eqref{def-s}). Applying Lemma \ref{non-Gron} to \eqref{l4:e8} with $y = \|z\|_{L^{p_0}}^{p_0}$, $r = 1/p_0 \in (0,1)$ and $\alpha(t) = \|f(t)\|_{L^{p_0}}$ we have
		\begin{equation}\label{C(T)0}
		\begin{aligned}
		\sup_{t\in(0,T)}\|z(t)\|_{L^{p_0}}^{p_0} &\leq C\left[\|z_0\|_{L^{p_0}}^{p_0} + (C(T)+1)T + \left(\int_0^T\|f(\tau)\|_{L^{p_0}} \right)^{p_0}\right]\\
		&\leq C\left[\|z_0\|_{L^{p_0}}^{p_0} + C(T) + \|f\|_{L^{p_0}_{L^{p_0}(\GG)}}^{p_0}T^{p_0-1}\right] =: C_{T,0}
		\end{aligned}
		\end{equation}
		where we used H\"older's inequality at the last step. Now integrating \eqref{l4:e8} on $(0,T)$ yields 
		\begin{equation}\label{D_T0}
		\begin{aligned}
		\int_0^T\|z\|_{L^{s_0}}^{p_0}  &\leq C\left[\|z_0\|_{L^{p_0}(\GG_0)}^2 + C(T) + \int_0^T\|f\|_{L^{p_0}}\|z\|_{L^{p_0}}^{p_0-1} \right]\\
		&\leq C\left[\|z_0\|_{L^{p_0}(\GG_0)}^2 + C(T) + \int_0^T\|f\|_{L^{p_0}}^{p_0}  + \int_0^T\|z\|_{L^{p_0}}^{p_0} \right]\\
		&\leq C\left[\|z_0\|_{L^{p_0}(\GG_0)}^2 + C(T) + \|f\|_{L^{p_0}_{L^{p_0}(\GG)}}^{p_0} + TC_{T,0}\right] =: D_{T,0}.
		\end{aligned}
		\end{equation}
		
		\medskip
		\noindent\underline{Step 3. Iteration.}	
		Assume now that we have the exponents $\{p_n\}$ and $\{s_n = p_n2^*/2\}$ and the constants $C_{T,n}$ and $D_{T,n}$ depending continuously on $T$, such that
		\begin{equation*}
		\sup_{t\in(0,T)}\|z(t)\|_{L^{p_n}}^{p_n} \leq C_{T,n} \quad \text{ and } \quad \int_0^T\|z\|_{L^{s_n}}^{p_n}  \leq D_{T,n}.
		\end{equation*}
		Let $p_{n+1}$ be determined later. We choose $\mu = p_{n+1}$ in \eqref{l4:e7} and denote $s_{n+1} = p_{n+1}s(p_{n+1})$ (see \eqref{def-s}) to get
		\begin{equation}\label{l4:e9}
		\pa_t\|z\|_{L^{p_{n+1}}}^{p_{n+1}} + C\|z\|_{L^{s_{n+1}}}^{p_{n+1}} \leq {C(T)} + C\|f\|_{L^{p_0}}\|z\|_{L^{\frac{p_0(p_{n+1}-1)}{p_0-1}}}^{p_{n+1}-1}.
		\end{equation}
		Choose $p_{n+1}$ such that
		\begin{equation*}
		p_{n+1} < \frac{p_0(p_{n+1}-1)}{p_0-1} < s_n,
		\end{equation*}
		and thus, we can use the {interpolation} inequality
		\begin{equation*}
		\|z\|_{L^{\frac{p_0(p_{n+1}-1)}{p_0-1}}} \leq \|z\|_{L^{p_{n+1}}}^{1-\theta}\|z\|_{L^{s_n}}^{\theta} 
		\end{equation*}
		where
		\begin{equation}\label{theta1}
		\frac{p_0-1}{p_0(p_{n+1}-1)} = \frac{1-\theta}{p_{n+1}} + \frac{\theta}{s_n}.
		\end{equation}
		It follows then from \eqref{l4:e9}
		\begin{equation}\label{l4:e9_1}
		\pa_t\|z\|_{L^{p_{n+1}}}^{p_{n+1}} + C\|z\|_{L^{s_{n+1}}}^{p_{n+1}} \leq {C(T)} + C\|f\|_{L^{p_0}}\|z\|_{L^{s_n}}^{\theta(p_{n+1}-1)}\|z\|_{L^{p_{n+1}}}^{p_{n+1} - (1+\theta(p_{n+1}-1))}.
		\end{equation}
		Applying Lemma \ref{non-Gron} with $y = \|z\|_{L^{p_{n+1}}}^{p_{n+1}}$ and $r = 1 - (1+\theta(p_{n+1}-1))/p_{n+1} \in (0,1)$ we have
		\begin{equation}\label{l4:e10}
		\sup_{t\in(0,T)}\|z(t)\|_{L^{p_{n+1}}}^{p_{n+1}} \leq C\left[\|z_0\|_{L^{p_{n+1}}}^{p_{n+1}} + {(C(T)+1)}T + \left(\int_0^T\|f\|_{L^{p_0}}\|z\|_{L^{s_n}}^{\theta(p_{n+1}-1)}  \right)^{\frac{p_{n+1}}{1+\theta(p_{n+1}-1)}} \right].
		\end{equation}
		By H\"older's inequality
		\begin{equation*}
		\int_0^T\|f\|_{L^{p_0}}\|z\|_{L^{s_n}}^{\theta(p_{n+1}-1)}  \leq \|f\|_{L^{p_0}_{L^{p_0}(\GG)}}\left(\int_0^T\|z\|_{L^{s_n}}^{\theta(p_{n+1}-1)\frac{p_0}{p_0-1}}  \right)^{\frac{p_0-1}{p_0}}.
		\end{equation*}
		Now we choose $p_{n+1}$ such that
		\begin{equation}\label{theta2}
		\theta(p_{n+1}-1)\frac{p_0}{p_0-1} = p_n,
		\end{equation}
		which in combination with \eqref{theta1} implies
		\begin{equation*}
		\theta = 1 - \frac{2}{d}\frac{p_0-1}{p_0}\frac{p_{n+1}}{p_{n+1}-1} \in (0,1).
		\end{equation*}
		Therefore 
		\begin{equation}\label{l4:e11}
		\int_0^T\|f\|_{L^{p_0}}\|z\|_{L^{s_n}}^{\theta(p_{n+1}-1)}  \leq \|f\|_{L^{p_0}_{L^{p_0}(\GG)}}\left(\int_0^T\|z\|_{L^{s_n}}^{p_n}  \right)^{\frac{p_0-1}{p_0}} \leq \|f\|_{L^{p_0}_{L^{p_0}(\GG)}}D_{T,n}^{\frac{p_0 -1}{p_0}}.
		\end{equation}
		Thus it follows from \eqref{l4:e10} that
		\begin{equation}\label{C(T)n+1}
		\sup_{t\in(0,T)}\|z(t)\|_{L^{p_{n+1}}}^{p_{n+1}} \leq C\left[\|z_0\|_{L^{p_{n+1}}}^{p_{n+1}} + {C(T)} + \left(\|f\|_{L^{p_0}_{L^{p_0}(\GG)}}D_{T,n}^{\frac{p_0 -1}{p_0}} \right)^{\frac{p_{n+1}}{1+\theta(p_{n+1}-1)}} \right] =: C_{T,n+1}.
		\end{equation}
		Now integrating \eqref{l4:e9_1} on $(0,T)$ and using \eqref{l4:e11} and \eqref{C(T)n+1} we get
		\begin{equation}\label{D_Tn+1}
		\begin{aligned}
		\int_0^T\|z\|_{L^{s_{n+1}}}^{p_{n+1}}  &\leq C\left[\|z_0\|_{L^{p_{n+1}}}^{p_{n+1}} + {C(T)} + C_{T,n+1}^{1 - \frac{1+\theta(p_{n+1}-1)}{p_{n+1}}}\int_0^T\|f\|_{L^{p_0}}\|z\|_{L^{s_n}}^{\theta(p_{n+1} -1)} \right]\\
		&\leq C\left[\|z_0\|_{L^{p_{n+1}}}^{p_{n+1}} +{C(T)} + C_{T,n+1}^{p_{n+1} - (1+\theta(p_{n+1}-1))}\|f\|_{L^{p_0}_{L^{p_0}(\GG)}}D_{T,n}^{\frac{p_0 -1}{p_0}}\right] =: D_{T,n+1}.
		\end{aligned}
		\end{equation}
		From \eqref{C(T)n+1} and \eqref{D_Tn+1} we get the desired estimates with
		\begin{equation}\label{pn}
		p_{n+1} = p_n\frac{d(p_0-1)}{p_0(d-2)+2} + \frac{dp_0}{p_0(d-2)+2}
		\end{equation}
		due to \eqref{theta1} and \eqref{theta2}.
		
		\medskip
		\noindent\underline{Step 4. Passing to limit.}
		By recalling $s_n < +\infty$ arbitrary when $d\leq 2$ and $s_n = p_n2^*/2$ (see \eqref{def-s}) when $d\geq 3$, an {interpolation} yields 
		\begin{equation*}
		L^\infty_{L^{p_n}(\GG)}\cap L^{p_n}_{L^{s_n}(\GG)} \hookrightarrow L^{p_n(d+2)/2}_{L^{p_n(d+2)/d}(\GG)},
		\end{equation*}
		thus
		\begin{equation}\label{l4:e12}
		\|z\|_{L^{p_n(d+2)/d}_{L^{p_n(d+2)/d}(\GG)}} \leq C(T) \quad \text{ for all } \quad n\geq 0.
		\end{equation}
		It is easy to see from \eqref{pn} that
		\begin{equation*}
		\lim_{n\to \infty}p_n = \begin{cases}
		+\infty &\text{ if } p_0 \geq (d+2)/2,\\
		p_\infty = \frac{dp_0}{d+2-2p_0} &\text{ if } p_0 < (d+2)/2.
		\end{cases}
		\end{equation*}
		Passing to limit $n\to \infty$ in \eqref{l4:e12} we obtain the desired estimate of Lemma \ref{heat-regularity}.
	\end{proof}	
	
	\begin{lemma}\label{L-infinity-Neumann}
		Let $u$ be the nonnegative solution to
		\begin{equation*}
		\begin{aligned}
		\dot{u} + u\na \cdot \VV_p - \dO \Delta u + \na\cdot(\J_\O u) &= 0, &&\text{ in } Q_T,\\
		\dO \na u \cdot \nu - uj &= z, &&\text{ on } \GGG_T,\\
		u(x,0) &= u_0(x), && \text{ in } \O_0.
		\end{aligned}
		\end{equation*}
		Assume that $\|u\|_{L^\infty_{L^1(\O)}}\leq M(T)$, $\|z\|_{L^r_{L^r(\GG)}} \leq C(T)$ for some $r\ge d+2$ and $u_0\in L^\infty(\O_0)$, then $u$ is bounded in $L^\infty_{L^\infty(\O)}$, i.e.
		\begin{equation}
		\|u\|_{L^\infty_{L^\infty(\O)}} \leq C(T).
		\end{equation}
	\end{lemma}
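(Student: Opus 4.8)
\textit{Proof idea.} The plan is to prove the bound by a Moser iteration adapted to the moving domain, exploiting that the boundary datum lies in $L^r_{L^r(\GG)}$ with $r\ge d+2$, which is (the borderline of) the \emph{parabolically supercritical} range for data on a space--time boundary of parabolic dimension $d+2$ --- precisely the threshold that upgrades finite $L^q$-bounds to an $L^\infty$-bound. One may either pull the equation back to the fixed cylinder $\O_0\times(0,T)$ via $v:=u\circ\Phi_t$, exactly as in the proof of Lemma~\ref{max-regular}, obtaining a uniformly parabolic equation with bounded smooth coefficients and a Robin-type condition $A\na v\cdot\nu_0+\gamma v=\tilde z$ on $\GG_0\times(0,T)$ with $\tilde z\in L^r$, and then invoke the classical $L^\infty$-estimate for such problems (e.g.\ \cite[Ch.~III]{LSU68}); or one can work directly on $\O(t)$ using the transport identities of Lemma~\ref{lem:derivinnerprod}, which I outline here. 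Since $\Phi_t$ is a $C^2$-diffeomorphism and $[0,T]$ is compact, the Sobolev embedding $H^1(\O(t))\hookrightarrow L^{2(d+1)/(d-1)}(\O(t))$, the trace embedding $H^1(\O(t))\hookrightarrow L^{2d/(d-1)}(\GG(t))$ and the Gagliardo--Nirenberg inequalities all hold with $t$-uniform constants, which I use repeatedly.

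\textit{Step 1: energy estimates.} For $p\ge 2$ I would test the equation for $u$ with $u^{p-1}$ (rigorously with a smooth truncation, then pass to the limit using $u\ge 0$), differentiate $\intO u^p$ by \eqref{eq:transp1}, and integrate by parts in the diffusion and material terms. On $\GG(t)$ the boundary condition $\dO\na u\cdot\nu=z+uj$ combines with the outflow term $\intG(\J_\O\!\cdot\!\nu)u^p=\intG j u^p$ produced by $\na\cdot(\J_\O u)$, so the $j$-contributions cancel and only $-\intG z\,u^{p-1}$ survives; on the outer boundary $\pa D(t)$ the corresponding terms cancel by the no-flux condition there (consistent with \eqref{law1}). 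After absorbing part of the gradient by Young's inequality, the material terms contribute at most $Cp^2\intO u^p$, and one arrives at
\[
\frac{d}{dt}\intO u^p + \frac{2\dO(p-1)}{p}\intO\big|\na(u^{p/2})\big|^2 \;\le\; Cp^2\intO u^p + p\intG|z|\,u^{p-1},
\]
with $C$ depending only on $\dO$, $\|\na\cdot\VV_p\|_\infty$ and $\|\J_\O\|_\infty$.

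\textit{Step 2: closing the iteration.} The boundary term is controlled by H\"older on $\GG(t)$ with exponents $r,r'$, followed by the trace embedding applied to $u^{p/2}$ and Gagliardo--Nirenberg interpolation against the hypothesis $\|u\|_{L^\infty_{L^1(\O)}}\le M$, which yields, for any $\varepsilon>0$,
\[
p\intG|z|\,u^{p-1} \;\le\; \varepsilon\,\big\|u^{p/2}\big\|_{H^1(\O(t))}^{2} + C_{\varepsilon,p}\big(1+\|u(t)\|_{L^{\rho_p}(\O(t))}^{\kappa_p}\big),
\]
with $\rho_p<p$ and $C_{\varepsilon,p}$ growing only geometrically in $p$; it is exactly the condition $r\ge d+2$ that forces the power of $\|u^{p/2}\|_{H^1}$ to be strictly below $2$ (hence absorbable) and the constants to be geometric. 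Feeding this back into Step~1 gives a differential inequality of the shape of \eqref{l4:e7}, to which the nonlinear Gronwall Lemma~\ref{non-Gron} applies. A bootstrap over a sequence $p_n\uparrow\infty$, carried out as in the proof of Lemma~\ref{heat-regularity} --- first propagating integrability to $u\in L^q_{L^q(\O)}$ for every finite $q$, then feeding that high integrability back into the estimate --- produces uniform bounds $\sup_t\|u(t)\|_{L^{p_n}(\O(t))}^{1/p_n}\le C(T)$, and letting $n\to\infty$ gives $\|u\|_{L^\infty_{L^\infty(\O)}}\le C(T)$.

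\textit{Main obstacle.} The delicate point is the exponent bookkeeping in Step~2: one must verify that the trace$+$Gagliardo--Nirenberg estimate genuinely puts $\|u^{p/2}\|_{H^1}$ to a power below $2$ --- this is where $r\ge d+2$ enters --- and that the iteration constants grow at most geometrically, so that $\|u(t)\|_{L^p}^{1/p}$ stays bounded as $p\to\infty$ rather than merely finite at each level. A secondary but necessary ingredient is the time-uniformity of the Sobolev, trace and interpolation constants on the evolving domains $\O(t)$, which follows from \eqref{Phi1} and the compactness of $[0,T]$ and is what keeps the final constant in the form $C(T)$.
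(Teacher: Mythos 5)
Your route is genuinely different from the paper's. The paper does not run a Moser iteration over powers $u^{p-1}$: it first establishes the single energy bound $\|u\|_V\le C(T)$ (the $p=2$ estimate, using the trace inequality and interpolation against $\|u\|_{L^\infty_{L^1(\O)}}\le M$), and then runs a De Giorgi level-set iteration on the truncations $v_i=(u-k+k/2^i)_+$. The boundary datum enters only through the term $\int_0^T\intG |v_{i+1}|(|z|+1)$, estimated via H\"older with $\|z\|_{L^{d+2}_{L^{d+2}(\GG)}}$ and the anisotropic embedding \eqref{inter}, producing the superlinear recursion $Y_{i+1}\le C(k)C(T)B^i\bigl(Y_i^{1+2/d}+Y_i^{1+1/d}+Y_i^{1+1/(d(d+2))}\bigr)$; the conclusion follows because $Y_1$ can be made arbitrarily small by taking the truncation level $k$ large. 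The point of this structure is precisely that one never has to track how constants depend on an integrability exponent tending to infinity: all constants are fixed, and smallness of $Y_1$ does the work.

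This matters because the step you flag as your ``main obstacle'' is a genuine gap in your sketch as written. Propagating $u\in L^q_{L^q(\O)}$ for every finite $q$ (as in Lemma \ref{heat-regularity}) does not yield $L^\infty$ unless you prove that the resulting constants satisfy $\limsup_{p\to\infty}C_{T,p}^{1/p}<\infty$; the nonlinear Gronwall Lemma \ref{non-Gron} and the Young constants $C_{\varepsilon,p}$ would have to be shown to grow at most polynomially/geometrically in $p$, and this is asserted rather than verified. Your claim that $r\ge d+2$ ``forces the power of $\|u^{p/2}\|_{H^1}$ strictly below $2$'' also needs care: absorbability for each fixed $p$ follows already from the trace interpolation with exponent $\theta\le 1$; what $r\ge d+2$ actually controls is the integrability in time of $\|z(t)\|_{L^r(\GG)}^{\sigma_p}$ after Young's inequality and the uniformity of the margin as $p\to\infty$, and that bookkeeping is exactly what is missing. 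Two smaller points: the term $-\tfrac{1}{2}\intG u^p\,\VV_\Gamma\cdot\nu$ is a boundary integral and cannot be folded into ``$Cp^2\intO u^p$''; it requires the modified trace inequality, as in the paper. And your alternative suggestion of pulling back to $\O_0\times(0,T)$ and citing a classical Robin-type $L^\infty$ estimate is plausible but would need the cited result to cover $L^r$ boundary data at the borderline $r=d+2$ with merely bounded measurable coefficients. If you want to complete a proof along your lines, either carry out the $p$-uniform constant tracking explicitly, or switch to the paper's truncation argument, which is designed to avoid it.
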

	\begin{proof}
		For each $T>0$ we define
		\begin{equation*}
		V = V(0,T):= \{u:Q_T \to \R: \; \|u\|_{V}^2:= \sup_{t\in (0,T)}\|u(t)\|_{L^2(\O(t))}^2 + \int_0^T\|u\|_{H^1(\O(t))}^2  < +\infty\}.
		\end{equation*}
		We will also using the following anisotropic Sobolev embeddings (see e.g. \cite[II.\S3]{LSU68})
		\begin{equation}\label{inter}
		\|u\|_{L^{2+4/d}_{L^{2+4/d}(\O)}} + \|u\|_{L^{2+2/d}_{L^{2+2/d}(\GG)}} \leq C(T)\|u\|_V.
		\end{equation}
		The dependence on $T$ of the constant here indicates that the embedding constants depend on time through moving domains.
		
		\medskip
		We will first prove that
		\begin{equation*}
		\|u\|_V \leq C(T).
		\end{equation*}
		In order to do that, we multiply the equation of $u$ by $u$ and then integrate over $\O(t)$ to obtain
		\begin{align*}
		\int_{\O(t)} \dot u u + u^2 \nabla \cdot \mathbf V_p - \delta_\Omega u \Delta u + \nabla\cdot (\mathbf J_\Omega u) u = 0.
		\end{align*}
		The transport formula in Lemma \ref{lem:derivinnerprod} leads  to
		\begin{align*}
		\int_{\O(t)} \dot u u = \dfrac{1}{2} \dfrac{d}{dt} \int_{\O(t)} |u|^2 - \dfrac{1}{2}\int_{\O(t)} u^2 \nabla \cdot \mathbf V_p 
		\end{align*}
		and by integration by parts we have
		\begin{align*}
		-\delta_\Omega \int_{\O(t)} (\Delta u) u = \delta_\Omega \int_{\O(t)} |\nabla u|^2 - \delta_\Omega \intG u \nabla u \cdot \nu.
		\end{align*}
		On the other hand, for the terms involving the velocities, we obtain
		\begin{align*}
		\int_{\O(t)} u^2 \nabla \cdot \mathbf V_p = \dfrac{1}{2}\int_{\O(t)} u^2 \nabla \cdot \mathbf V_p - \dfrac{1}{2} \int_{\O(t)} \nabla (u^2) \cdot \mathbf V_p + \dfrac{1}{2}\intG u^2 \mathbf V_p \cdot \nu.
		\end{align*}
		and
		\begin{align*}
		\int_{\O(t)} \nabla \cdot (\mathbf J_\Omega u) u &= -\int_{\O(t)} u \mathbf V_\Omega \cdot \nabla u + \int_{\O(t)} u \mathbf V_p \cdot \nabla u + \intG u^2\, \mathbf J_\Omega \cdot \nu \\
		&=\int_{\O(t)} u \mathbf V_\Omega \cdot \nabla u +\dfrac{1}{2} \int_{\O(t)} \nabla(u^2) \cdot \mathbf V_p + \intG u^2 \, j.
		\end{align*}
		Adding the four identities above and recalling the boundary condition $\delta_\Omega \nabla u\cdot  \nu - uj = z$ leads  to
		\begin{align*}
		\dfrac{1}{2} \dfrac{d}{dt} \int_{\O(t)} |u|^2 + \delta_\Omega \int_{\O(t)} |\nabla u|^2 &= - \dfrac{1}{2} \intG u^2 \mathbf V_p \cdot \nu - \dfrac{1}{2} \int_{\O(t)} u \mathbf V_\Omega \cdot \nabla u + \intG u z.
		\end{align*}
		Noting that $\mathbf V_p|_\Gamma \cdot \nu = \mathbf V_\Gamma \cdot \nu$ we rewrite the above as 
		\begin{align*}
		\dfrac{1}{2} \dfrac{d}{dt} \int_{\O(t)} |u|^2 + \delta_\Omega \int_{\O(t)} |\nabla u|^2 &= - \dfrac{1}{2} \intG u^2 \mathbf V_\Gamma \cdot \nu - \dfrac{1}{2} \int_{\O(t)} u \mathbf V_\Omega \cdot \nabla u + \intG u z.
		\end{align*}
		Using Cauchy's inequality yields 
		\begin{equation*}
		\left| \dfrac{1}{2}\int_{\O(t)}u\VV_\O\cdot\na u  \right| \leq {\dfrac{\|\VV_\O\|_\infty}{2}}\int_{\O(t)}|u||\na u|  \leq \frac{\dO}{4}\int_{\O(t)}|\na u|^2  + {C(T)}\int_{\O(t)}|u|^2 .
		\end{equation*}
		On the other hand, by modified Trace theorem, see \cite[Theorem 1.5.1.10]{grisvard2011elliptic}, we can estimate
		\begin{equation*}
		\left|\dfrac{1}{2} \intG u^2 \mathbf V_\Gamma \cdot \nu\right|	\leq \dfrac{\|\mathbf V_\Gamma\|_{\infty}}{2} \int_{\GG(t)}|u|^2  \leq \frac{\dO}{8}\int_{\O(t)}|\na u|^2  + {C(T)}\int_{\O(t)}|u|^2 ,
		\end{equation*}
		and similarly
		\begin{equation*}
		\int_{\GG(t)}uz  \leq \int_{\GG(t)}|u|^2  + \frac 14\int_{\GG(t)}|z|^2  \leq \frac{\dO}{8}\int_{\O(t)}|\na u|^2  + {C(T)}\int_{\O(t)}|u|^2 + \frac 14\int_{\GG(t)}|z|^2 .
		\end{equation*}
		Therefore, we obtain
		\begin{equation*}
		\frac{d}{dt}\int_{\O(t)}|u|^2  + \frac{\dO}{2}\int_{\O(t)}|\na u|^2  \leq {C(T)}\int_{\O(t)}|u|^2  + C\int_{\GG(t)}|z|^2 .
		\end{equation*}
		Adding both sides with $\frac{\dO}{2}\int_{\O(t)}|u|^2 $ then integrating over $(0,T)$ we get
		\begin{equation}\label{tt}
		\sup_{t\in (0,T)}\|u(t)\|_{L^2(\O(t))}^2 + \frac{\dO}{2}\int_0^T\|u\|_{H^1(\O(t))}^2  \leq \|u_0\|_{L^2(\O_0)}^2 +  {C(T)}\int_0^T\|u\|_{L^2(\O(t))}^2  + C\|z\|_{L^2_{L^2(\GG)}}^2.
		\end{equation}
		Using $\|u\|_{L^\infty_{L^1(\Omega)}}\leq M$ and {interpolation} inequality we have, for some $\theta \in (0,1)$,
		\begin{equation*}
		\begin{aligned}
		{C(T)}\int_0^T\|u\|_{L^2(\O(t))}^2  &\leq {C(T)}\int_0^T\|u\|_{L^1(\O(t))}^{2\theta}\|u\|_{H^1(\O(t))}^{2(1-\theta)} \\
		&\leq {C(T)}M(T)^{2\theta}\int_0^T\|u\|_{H^1(\O(t))}^{2(1-\theta)} \\
		&\overset{\text{Young's inequality}}{\leq}	\frac{\dO}{4}\int_0^T\|u\|_{H^1(\O(t))}^2  +{C(T)}.
		\end{aligned}
		\end{equation*}
		Using this in \eqref{tt} and recalling that $\|z\|_{L^2_{L^2(\GG)}} \leq C(T)$ we get the desired estimate
		\begin{equation*}
		\|u\|_V \leq C(T).
		\end{equation*}
		
		Now to prove the boundedness of $u$, we fix a constant $k>0$ which will be determined later, and define, for each $i \geq 1$,
		\begin{equation*}
		v_i:= (u - k + k/2^i)_+ = \max\{u - k+k/2^i;0\},
		\end{equation*}
		and
		\begin{equation*}
		A_i = \{(x,t)\in Q_T: u(x,t) \geq k - k/2^i\} \quad \text{ and } \quad B_i = \{(x,t)\in \GGG_T: u(x,t) \geq k - k/2^i\}.
		\end{equation*}
		We will frequently use the following observation
		\begin{equation}\label{layer}
		\forall (x,t)\in A_{i+1}\cup B_{i+1}: \quad v_{i+1} \leq v_i \quad \text{ and } \quad v_i \geq \frac{k}{2^{i+1}}.
		\end{equation}
		A direct calculation shows that $v_{i+1}$ is a solution to 
		\renewcommand{\v}{v_{i+1}}
		\begin{align*}
		\dot v_{i+1} + v_{i+1} \nabla \cdot \mathbf V_p - \delta_\Omega \Delta \v + \nabla\cdot (\mathbf J_\Omega u) &= 0 \quad \text{ in } \Omega(t)\\
		\delta_\Omega \nabla \v \cdot \nu - u j &= z \quad \text{ on } \Gamma(t)
		\end{align*}
		Reasoning as above, we aim to test the system above with $\v$. Note that 
		\begin{align*}
		\int_{\O(t)} \dot v_{i+1} \v + \int_{\O(t)} \v^2 \nabla \cdot \mathbf V_p &= \dfrac{1}{2}\dfrac{d}{dt}\int_{\O(t)} \v^2 + \dfrac{1}{2} \int_{\O(t)}\v^2 \nabla\cdot\mathbf V_p \\
		&=\dfrac{1}{2}\dfrac{d}{dt}\int_{\O(t)} \v^2 - \dfrac{1}{2}\int_{\O(t)} \nabla (\v^2) \cdot \mathbf V_p + \dfrac{1}{2}\intG \v ^2\mathbf V_p \cdot \nu \\
		&=\dfrac{1}{2}\dfrac{d}{dt}\int_{\O(t)} \v^2 - \dfrac{1}{2}\int_{\O(t)} \nabla (\v^2) \cdot \mathbf V_p + \dfrac{1}{2}\intG \v^2 \mathbf V_\Gamma \cdot \nu, 
		\end{align*}
		and integration by parts gives
		\begin{align*}
		-\delta_\Omega \int_{\O(t)} \Delta \v \, \v = \delta_\Omega \int_{\O(t)} |\nabla \v|^2 - \intG \v \nabla \v \cdot \nu.
		\end{align*}
		As for the term involving $\mathbf J_\Omega$, we have
		\begin{align*}
		\int_{\O(t)} \nabla \cdot (\mathbf J_\Omega u) \v &= - \int_{\O(t)} \mathbf J_\Omega u \cdot \nabla \v + \intG \v ju \\ 
		&= \int_{\O(t)} \mathbf V_p u \cdot \nabla \v - \int_{\O(t)} \mathbf V_\Omega u \cdot \nabla \v +  \intG \v ju \\
		&= \dfrac{1}{2}\int_{\O(t)} \mathbf V_p \cdot \nabla (\v^2) - \int_{\O(t)} \mathbf V_\Omega u \cdot \nabla \v +  \intG \v ju.
		\end{align*}		
		We finally combine all of the above to conclude that
		\begin{equation}\label{l7.1}
		\dfrac{1}{2}\dfrac{d}{dt} \int_{\O(t)} \v^2 + \delta_\Omega \int_{\O(t)} |\nabla \v|^2 = -\dfrac{1}{2}\intG \v^2 \mathbf V_\Gamma \cdot \nu - \int_{\O(t)} \mathbf V_\Omega u \cdot \nabla \v + \intG z \v.
		\end{equation}
		We now estimate all the terms on the right hand side. For the first and last terms, we have 
		\begin{align*}
		\left|\dfrac{1}{2}\intG \v^2 \mathbf V_\Gamma\cdot \nu \right| \leq \|\mathbf V_\Gamma\|_\infty \intG |\v|^2 \quad \text{ and } \quad \left|\intG z \v\right| \leq \intG |\v| \, |z|.
		\end{align*}
		To estimate the second term, we use the definition of $\v$ to write 
		\begin{align*}
		\left| \int_{\O(t)} \mathbf V_\Omega u \cdot \nabla \v \right| &= \left| \int_{\O(t)} \mathbf V_\Omega  \left(\v + k - k/2^{i+1}\right) \cdot \nabla \v  \right| \\
		&\leq \|\mathbf V\|_\infty \int_{\O(t)} |\v| \, |\nabla \v| + \left(k - k/2^{i+1}\right) \left| \int_{\O(t)} \VV_{\O}  \nabla \v\right| \\
		&\qquad + (k-k/2^{i+1})\left|\intG v_{i+1}\VV_{\O}\cdot \nu \right|\\
		&\leq \dfrac{\delta_\Omega}{2} \int_{\O(t)} |\nabla \v|^2 + C(T) \int_{\O(t)} |\v|^2 + C(k)\|\na\cdot \VV_{\O}\|_{\infty}\int_{\O(t)}|v_{i+1}|\\
		&\qquad + C\|\VV_{\O}\|_{\infty}\intG |v_{i+1}|
		\end{align*}
		
		Inserting the estimates above into \eqref{l7.1} and adding to both sides $\frac{\dO}{2}\int_{\O(t)}|v_{i+1}|^2 $ we get 
		\begin{equation*} 
		\begin{aligned}
		&\frac{d}{dt}\|v_{i+1}\|_{L^2(\O(t))}^2 + \frac{\dO}{2}\|v_{i+1}\|_{H^1(\O(t))}^2\\
		& \leq {C(T)}\left(\int_{\O(t)}(|v_{i+1}|^2 + |v_{i+1}|) + \int_{\GG(t)}|v_{i+1}|^2  + \int_{\GG(t)}|v_{i+1}|(|z|+1)\right).
		\end{aligned}
		\end{equation*}
		Integrating on $(0,T)$ and defining
		\begin{align*}
		Y_{i+1}:= \|v_{i+1}\|_V^2 = \sup_{t\in (0,T)}\|v_{i+1}(t)\|_{L^2(\O(t))}^2 + \int_0^T\|v_{i+1}\|_{H^1(\O(t))}^2 
		\end{align*}		
		yields 
		\begin{equation}\label{l7.4}
		\begin{aligned}
		Y_{i+1}\leq \|v_{i+1}(0)\|_{L^2(\O_0)}^2+ &{C(T)}\bigg(\int_0^T\int_{\O(t)}(|v_{i+1}|^2 + |v_{i+1}|)\\
		&\qquad + \int_0^T\int_{\GG(t)}|v_{i+1}|^2 + \int_0^T\int_{\GG(t)}|v_{i+1}|(|z|+1)\bigg).
		\end{aligned}
		\end{equation}
		We now estimate the terms on the right hand side of \eqref{l7.4}. First, by choosing $k \geq 2\|u_0\|_{L^\infty(\O_0)}$ we have 
		\begin{equation}\label{l7.4_1}
		\|v_{i+1}(0)\|_{L^2(\O_0)}^2 = \|(u_0 - k + k/2^{i+1})_+\|_{L^2(\O_0)}^2 = 0.
		\end{equation}
		Using \eqref{layer} we get
		\begin{equation}\label{l7.5}
		\begin{aligned}
		\int_0^T\int_{\O(t)}(|v_{i+1}|^2  + |v_{i+1}|)  &= \int_0^T\int_{\O(t)}(|v_{i+1}|^2  + |v_{i+1}|)\mathbf 1_{A_{i+1}}  \\
		&\overset{(\ref{layer})}{\leq} \int_0^T\int_{\O(t)}(|v_{i}|^2+|v_i|)\mathbf 1_{A_{i+1}}  \\
		&\overset{(\ref{layer})}{\leq}  \left[\left(\frac{2^{i+1}}{k}\right)^{4/d}+\left(\frac{2^{i+1}}{k}\right)^{(d+4)/(2d+4)}\right]\int_0^T\int_{\O(t)}|v_i|^{2+\frac{4}{d}}  \\
		&\leq C(k)B_1^{i}\|v_i\|_V^{2+\frac{4}{d}} = C(k)B_1^{i}Y_i^{1 + \frac{2}{d}}.
		\end{aligned}
		\end{equation}
		The boundary terms can be estimated in a similar way,
		\begin{equation}\label{l7.7}
		\begin{aligned}
		\int_0^T\int_{\GG(t)}|v_{i+1}|^2   &= \int_0^T\int_{\GG(t)}|v_{i+1}|^2\mathbf 1_{B_{i+1}}  \\
		&\leq \int_0^T\int_{\GG(t)}|v_{i}|^2\mathbf 1_{B_{i+1}}  \\
		&\leq \left(\frac{2^{i+1}}{k}\right)^{2/d}\int_0^T\int_{\GG(t)}|v_i|^{2+\frac 2d}  \\
		&\leq C(k)B_3^i\|v_i\|_V^{2+\frac 2d} = C(k)B_3^iY_i^{1+\frac 1d}.
		\end{aligned}
		\end{equation}
		Finally,
		\begin{equation}\label{l7.9}
		\begin{aligned}
		\int_0^T\int_{\GG(t)}|v_{i+1}|(|z|+1)   &\leq \int_0^T\int_{\GG(t)}\mathbf 1_{B_{i+1}}|v_i|(|z|+1)  \\
		&\leq \left(\frac{2^{i+1}}{k}\right)^{1+\frac{2}{d(d+2)}}\int_0^T\int_{\GG(t)}(|z|+1)|v_i|^{2+\frac{2}{d(d+2)}}  \\
		&\leq C(k)B_5^i\||z|+1\|_{L^{d+2}_{L^{d+2}(\GG)}}\|v_i\|_{L^{2+2/d}_{L^{2+2/d}(\GG)}}^{2+\frac{2}{d(d+2)}}\\
		&\overset{(\ref{inter})}{\leq} C(k)C(T)B_5^iY_i^{1+\frac{1}{d(d+2)}}.
		\end{aligned}
		\end{equation}
		Applying the estimates \eqref{l7.4_1}--\eqref{l7.9} into \eqref{l7.4} we get
		\begin{equation}\label{l7.9_1}
		Y_{i+1} \leq C(k)C(T)B^{i}\left(Y_i^{1+\frac 2d} + Y_i^{1+\frac 1d} + Y_i^{1 + \frac{1}{d(d+2)}}\right)
		\end{equation}
		with $B= \max\{B_1, \ldots, B_5\}$. To apply Moser's iteration, we need $Y_1$ to be small enough. From \eqref{l7.4} we have
		\begin{equation}\label{l7.10}
		\begin{aligned}
		Y_1 \leq \|v_{1}(0)\|_{L^2(\O_0)}^2&+ {C(T)}\int_0^T\int_{\O(t)}(|v_{1}|^2+|v_1|)+ {C(T)}\int_0^T\int_{\GG(t)}|v_{1}|^2 + C\int_0^T\int_{\GG(t)}|v_{1}|(|z|+1)
		\end{aligned}
		\end{equation}
		Choosing $k \geq 2\|u_0\|_{L^\infty(\O_0)}$ it yields  $\|v_1(0)\|_{L^2(\O_0)} = 0$. Using \eqref{layer} we have
		\begin{equation*}
		\begin{aligned}
		\int_0^T\int_{\O(t)}(|v_1|^2+|v_1|)\mathbf 1_{A_1} &\leq \left[\left(\frac 2k\right)^{4/d} + \left(\frac 2k\right)^{(d+4)/(2d+4)}\right]\int_0^T\int_{\O(t)}|u|^{2+\frac 4d}\\
		&\leq  \left[\left(\frac 2k\right)^{4/d} + \left(\frac 2k\right)^{(d+4)/(2d+4)}\right]\|u\|_V^{2+4/d},
		\end{aligned}
		\end{equation*}
		and
		\begin{equation*}
		\int_0^T\int_{\GG(t)}|v_1|^2\mathbf 1_{B_1}   \leq \left(\frac 2k \right)^{2/d}\|u\|_V^{2+2/d},
		\end{equation*}
		and finally, using $\||z|+1\|_{L^{d+2}_{L^{d+2}(\GG)}} \leq C(T)$,
		\begin{equation*}
		\int_0^T\int_{\GG(t)}|v_1|(|z|+1)\mathbf 1_{B_1}   \leq \left(\frac 2k\right)^{1+\frac{1}{d(d+2)}}C(T)\|u\|_V^{2+\frac{1}{d(d+2)}}.
		\end{equation*}
		Thus, from \eqref{l7.10} and the previous estimate that $\|u\|_V \leq C(T)$
		\begin{equation*}
		Y_1 \leq C(T)\left[\left(\frac 2k\right)^{4/d} + \left(\frac 2k\right)^{(d+4)/(2d+4)}+\left(\frac 2k\right)^{1+4/d} + \left(\frac 2k\right)^{2/d} + \left(\frac 2k\right)^{1+\frac{1}{d(d+2)}}\right].
		\end{equation*}
		For any $\varepsilon>0$, we can now choose $k$ large enough such that we obtain $Y_1 \leq \varepsilon$, and it thus follows from the Moser iteration \eqref{l7.9_1} that
		\begin{equation*}
		\lim_{i\to \infty}Y_i = 0,
		\end{equation*}
		or equivalently
		\begin{equation*}
		\|u\|_{L^\infty_{L^\infty(\O)}} \leq \frac k2 = C(T).
		\end{equation*}
	\end{proof}
	
	\begin{lemma}\label{L-infinity-manifold}
		Assume that $z\geq 0$ with $\|z\|_{L^r_{L^r(\GG)}} \leq C(T)$ for all $r\ge d+1$ and $w_0 \in L^\infty(\GG_0)$. Then the nonnegative solution to
		\begin{equation*}
		\begin{aligned}
		\dot{w} + w\naG\cdot \VV_p - \dG\Delta_\GG w + \naG\cdot(\J_\GG w) &= z, && \text{ on } \GGG_T,\\
		w(x,0) &= w_0(x), &&\text{ on } \GG_0,
		\end{aligned}
		\end{equation*}
		which satisfies $\|w\|_{L^\infty_{L^1(\GG)}} \leq M(T)$, is bounded in $L^\infty$, i.e.
		\begin{equation*}
		\|w\|_{L^\infty_{L^\infty(\GG)}} \leq C(T).
		\end{equation*}
	\end{lemma}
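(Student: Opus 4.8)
The plan is to run the Moser iteration already used for Lemma \ref{L-infinity-Neumann}, but now on the closed evolving surface $\GG(t)$; since $\GG(t)$ has no boundary the argument is essentially a simplification of that proof, the trace contributions there being replaced by the genuine surface source $z$ here. Throughout I would work with the energy space
\begin{equation*}
V := \Big\{ v:\GGG_T\to\R \ :\ \|v\|_V^2 := \sup_{t\in(0,T)}\|v(t)\|_{L^2(\GG(t))}^2 + \int_0^T\|v(t)\|_{H^1(\GG(t))}^2 < \infty \Big\}
\end{equation*}
together with the anisotropic Sobolev embedding on the $d$-dimensional evolving surface, $\|v\|_{L^{2+4/d}_{L^{2+4/d}(\GG)}}\le C(T)\|v\|_V$ (see e.g. \cite{LSU68}), whose constant is finite for each fixed $T$ because of the regularity of the flow $\Phi$.

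\textbf{Step 1 (energy bound).} First I would prove $\|w\|_V\le C(T)$. Testing the equation with $w$, using the transport identity \eqref{eq:transp2} for $\frac{d}{dt}\intG w^2$, integrating by parts on $\GG(t)$ (no boundary term) and applying the surface divergence theorem to the $\J_\GG$ term as in the proof of Lemma \ref{heat-regularity}, all velocity contributions reduce to terms such as $\|\VV_\GG\|_\infty\intG|w|\,|\naG w|$ and $\|\naG\cdot\VV_p\|_\infty\intG w^2$, which are absorbed by $\frac{\dG}{2}\intG|\naG w|^2$ and $C\intG w^2$ through Young's inequality (all relevant norms of $\VV_\GG,\VV_p,\J_\GG$ being finite on $[0,T]$), while the source splits as $\intG zw\le\intG w^2+\frac14\intG z^2$. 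After integrating in time, $\int_0^T\intG w^2$ is absorbed into $\int_0^T\|w\|_{H^1(\GG(t))}^2$ using $\|w\|_{L^\infty_{L^1(\GG)}}\le M(T)$, interpolation and Young's inequality, exactly as in Lemma \ref{L-infinity-Neumann}. Since $\GG(t)$ has finite area one has $\|z\|_{L^2_{L^2(\GG)}}\le C(T)\|z\|_{L^{d+1}_{L^{d+1}(\GG)}}\le C(T)$ and $w_0\in L^\infty(\GG_0)\subset L^2(\GG_0)$, hence $\|w\|_V\le C(T)$.

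\textbf{Step 2 (iteration).} Fix $k\ge 2\|w_0\|_{L^\infty(\GG_0)}$, to be enlarged at the end, and set for $i\ge1$
\begin{equation*}
v_i := \big(w-k+k/2^i\big)_+,\qquad B_i := \big\{(x,t)\in\GGG_T:\ w(x,t)\ge k-k/2^i\big\},
\end{equation*}
so that $v_{i+1}\le v_i$ everywhere and $v_i\ge k/2^{i+1}$ on $B_{i+1}$. On $\{v_{i+1}>0\}$ the function $v_{i+1}$ satisfies an equation of the same structure, $\dot v_{i+1}+v_{i+1}\naG\cdot\VV_p-\dG\Delta_\GG v_{i+1}+\naG\cdot(\J_\GG w)=z$ (the reduction used in Lemma \ref{L-infinity-Neumann}); testing it with $v_{i+1}$ as in Step 1 --- the constant $k-k/2^{i+1}$ hidden in the $\J_\GG$-term producing only extra contributions of the form $C(k)\intG|v_{i+1}|$, of a type already present on the right-hand side --- yields, after adding $\frac{\dG}{2}\intG|v_{i+1}|^2$ and integrating in time,
\begin{equation*}
Y_{i+1}\le\|v_{i+1}(0)\|_{L^2(\GG_0)}^2 + C(T)\Big(\int_0^T\!\!\intG\big(|v_{i+1}|^2+|v_{i+1}|\big)+\int_0^T\!\!\intG|v_{i+1}|\big(|z|+1\big)\Big),
\end{equation*}
where $Y_{i+1}:=\|v_{i+1}\|_V^2$ and $\|v_{i+1}(0)\|_{L^2(\GG_0)}=0$ by the choice of $k$. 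Using $\mathbf{1}_{B_{i+1}}\le(2^{i+1}/k)^s|v_i|^s$ with $v_{i+1}\le v_i$ and the embedding above, the first bracket is $\le C(k)B^i\|v_i\|_{L^{2+4/d}_{L^{2+4/d}(\GG)}}^{2+4/d}\le C(T)C(k)B^iY_i^{1+2/d}$; for the source term, choosing $s=(d+3)/(d+1)$ and applying H\"older with exponents $d+1$ and $(d+1)/d$ gives
\begin{equation*}
\int_0^T\!\!\intG|v_{i+1}|\big(|z|+1\big)\le C(k)B^i\big\||z|+1\big\|_{L^{d+1}_{L^{d+1}(\GG)}}\|v_i\|_{L^{2+4/d}_{L^{2+4/d}(\GG)}}^{(2d+4)/(d+1)}\le C(T)C(k)B^iY_i^{(d+2)/(d+1)},
\end{equation*}
which is precisely where the hypothesis is invoked, with $r=d+1$. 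Hence $Y_{i+1}\le C(T)C(k)B^i\big(Y_i^{1+2/d}+Y_i^{(d+2)/(d+1)}\big)$, and both exponents are strictly larger than $1$.

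\textbf{Step 3 and expected obstacle.} Estimating $Y_1$ directly from the inequality above together with the bound $\|w\|_V\le C(T)$ of Step 1 gives $Y_1\le C(T)\big[(2/k)^{4/d}+(2/k)^{1+4/d}+(2/k)^{(d+3)/(d+1)}\big]$, which is made as small as needed by enlarging $k$; the standard nonlinear iteration lemma for recursions of the form $Y_{i+1}\le CB^i(Y_i^{1+\sigma_1}+Y_i^{1+\sigma_2})$ with $\sigma_1,\sigma_2>0$ then forces $Y_i\to0$, i.e. $(w-k)_+\equiv0$ a.e., whence $\|w\|_{L^\infty_{L^\infty(\GG)}}\le k=C(T)$. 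I do not expect a conceptual obstacle beyond what is already handled for Lemma \ref{L-infinity-Neumann}; the points that need care are the exponent bookkeeping --- in particular verifying that the source $z$ requires merely the integrability $L^{d+1}_{L^{d+1}(\GG)}$ granted by the hypothesis while still returning powers of $Y_i$ strictly above $1$ --- and checking that the velocity-induced correction terms coming from the transport formulas of Lemma \ref{lem:derivinnerprod} are genuinely lower order and absorbable by constants that remain finite (though $T$-dependent) on $[0,T]$.
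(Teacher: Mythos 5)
Your proposal is correct and follows exactly the route the paper intends: the paper omits the proof of this lemma entirely, stating only that it is "similar to Lemma \ref{L-infinity-Neumann}", and your plan is precisely that adaptation — energy bound in the surface space $V$, truncation, Moser iteration — with the exponent bookkeeping (Hölder with exponents $d+1$ and $(d+1)/d$ against the embedding into $L^{2+4/d}_{L^{2+4/d}(\GG)}$, yielding the power $Y_i^{(d+2)/(d+1)}$) correctly matching the hypothesis $r\ge d+1$.
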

	\begin{proof}
		The proof of this lemma is similar to Lemma \ref{L-infinity-Neumann} and thus we omit it here.
	\end{proof}

	\subsection{Proof of Theorem \ref{thm:main}}
	To prove Theorem \ref{thm:main}, we still need to weak comparison principles which are shown in the next two lemmas.
	\begin{lemma}\label{comparison-surface}
		Let $y$ be a weak solution satisfying 
		\begin{equation*}
		\begin{aligned}
		\dot{y} + y \naG\cdot \VV_p + \naG\cdot(\J_\GG y) - \delta \Delta_\GG y  &=f, \quad \text{ on } \GGG_T,\\
		y(0) &= y_0 \leq 0, \quad \text{ on } \GG_0
		\end{aligned}
		\end{equation*}
		where $L^2_{L^2(\Gamma)} \ni f \le 0$ then $y(x,t) \leq 0$ a.e. on $\GGG_T$.
	\end{lemma}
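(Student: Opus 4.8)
The plan is to run the standard $L^2$-energy estimate on the evolving surface with test function the positive part $y_+ := \max\{y,0\}$, and to close a Gronwall inequality for $t\mapsto\intG|y_+|^2$ which starts from zero because $y_0\le 0$.

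First I would note that $y_+$ is an admissible test function: $y_+\in L^2_{H^1(\GG)}$ whenever $y$ is, and the chain rule for the weak material derivative yields
\[
\langle \dot y, y_+\rangle = \langle \dot{(y_+)}, y_+\rangle = \frac12\frac{d}{dt}\intG|y_+|^2 - \frac12\intG|y_+|^2\,\naG\cdot\VV_p,
\]
the last equality being the transport formula \eqref{eq:transp2} of Lemma \ref{lem:derivinnerprod}. This is proved in the usual way: one approximates $s\mapsto s_+$ by smooth convex functions $\beta_\varepsilon$ with bounded derivative, applies the classical chain rule and transport theorem to $\beta_\varepsilon(y)$, and passes to the limit; the requisite function-space facts on evolving surfaces are in \cite{alphonse2021function}. (Alternatively one may test the Steklov-in-time regularised equation with $y_+$ and pass to the limit, obtaining the same inequality.)

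Next I would test the equation against $y_+$ and estimate term by term, using the pointwise a.e.\ identities $y\,y_+ = |y_+|^2$ and $y\,\naG y_+ = y_+\,\naG y_+ = \tfrac12\naG|y_+|^2$. The material-derivative term together with the $y\,\naG\cdot\VV_p$ term gives $\tfrac12\tfrac{d}{dt}\intG|y_+|^2 + \tfrac12\intG|y_+|^2\,\naG\cdot\VV_p$; the diffusion term gives $\delta\intG|\naG y_+|^2\ge 0$; since $\GG(t)$ is a closed hypersurface, the divergence theorem applied to the transport term gives $\intG\naG\cdot(\J_\GG y)\,y_+ = -\intG y_+\,\J_\GG\cdot\naG y_+$ with no boundary contribution; and the right-hand side obeys $\intG f\,y_+\le 0$ because $f\le 0$ and $y_+\ge 0$. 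As $\VV_p$ is $C^1$ (so $\naG\cdot\VV_p$ is bounded on the compact sets involved) and $\J_\GG = \VV_\GG-\VV_p\in L^\infty$, Young's inequality gives $\bigl|\intG y_+\,\J_\GG\cdot\naG y_+\bigr| \le \tfrac{\delta}{2}\intG|\naG y_+|^2 + C\intG|y_+|^2$, which is absorbed by the diffusion term, leaving
\[
\frac{d}{dt}\intG|y_+|^2 \le C\intG|y_+|^2 \qquad\text{for a.e. } t\in(0,T).
\]
Since $y(0)=y_0\le 0$ we have $\int_{\GG_0}|y_+(0)|^2 = 0$, so Gronwall's inequality forces $\intG|y_+(t)|^2 = 0$ for a.e.\ $t$, i.e.\ $y\le 0$ a.e.\ on $\GGG_T$. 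The only genuinely delicate point is the justification of the chain rule / transport identity for $y_+$ at the level of weak material derivatives on an evolving surface; everything after that is a routine energy estimate. The analogous lemmas for the Neumann problem in the bulk (Lemma \ref{comparison-Neumann}) would follow by the same argument, with the boundary term $\intG u_+\,(\VV_\Omega\cdot\nu)\,u_+$ handled via the modified trace inequality exactly as in the proof of Lemma \ref{L-infinity-Neumann}.
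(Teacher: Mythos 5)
Your proposal is correct and follows essentially the same route as the paper: test the equation with $y_+$, use the transport formula of Lemma \ref{lem:derivinnerprod} together with integration by parts on the closed surface, absorb the advection term into the diffusion term via Young's inequality, and conclude by Gronwall from $y_+(0)=0$. Your extra remark on rigorously justifying the chain rule for $y_+$ at the level of weak material derivatives (via smooth convex approximation) addresses a point the paper leaves implicit, but the argument is the same.
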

	\begin{proof}
		Define $y_+ = \max\{0; y\} \geq 0$ and note that $y_+(0) = 0$. Multiplying the equation of $y$ by $y_+$ then integrating over $\GG(t)$ leads  to
		\begin{equation*}
		\int_{\GG(t)}[\dot{y} + y \naG\cdot \VV_p + \naG\cdot(\J_\GG y) - \delta \Delta_\GG y ]y_+    = \int_{\GG(t)}fy_+  \leq 0.
		\end{equation*}
		The following calculations are similar to those in Lemma \ref{L-infinity-Neumann}. We first use Lemma \ref{lem:derivinnerprod} and integration by parts to note that 
		\begin{align*}
		\intG \dot y y_+ = \dfrac{1}{2}\dfrac{d}{dt} \intG |y_+|^2 - \dfrac{1}{2} \intG y_+^2 \naG\cdot \mathbf V_p \quad \text{ and } \quad -\delta \intG (\Delta_\Gamma y) y_+ = \delta \intG |\naG y_+|^2,
		\end{align*}
		and then the divergence theorem to obtain
		\begin{align*}
		\intG y y_+ \naG \cdot \mathbf V_p = \intG y_+^2 \naG \cdot \mathbf V_p = \dfrac{1}{2} \intG y_+^2 \naG \cdot \mathbf V_p - \dfrac{1}{2} \intG \naG (y_+^2) \cdot \mathbf V_p
		\end{align*}
		and 
		\begin{align*}
		\intG \naG \cdot (\mathbf J_\Gamma y) y_+ &= - \intG (\mathbf V_\Gamma y) \cdot \naG y_+ + \intG \mathbf V_p y \cdot \naG y_+\\
		&= - \intG y_+ \mathbf V_\Gamma\cdot \naG y_+ + \dfrac{1}{2} \intG \mathbf V_p \cdot \naG (y_+)^2.
		\end{align*}
		Adding the four identities above leads  to
		\begin{equation}\label{l5:e1}
		\frac 12\pa_t\int_{\GG(t)}|y_+|^2  + \delta \int_{\GG(t)}|\naG y_+|^2  \leq \int_{\GG(t)}y_+ \VV_\GG \cdot \naG y_+ .
		\end{equation}
		By using Cauchy-Schwarz's inequality we have
		\begin{equation*}
		\begin{aligned}
		\int_{\GG(t)}y_+ \VV_\GG \cdot \naG y_+  &\leq \|\VV_\GG\|_{\infty}\int_{\GG(t)}|y_+||\naG y_+| \\
		&\leq \frac{\delta}{2}\int_{\GG(t)}|\naG y_+|^2  + \frac{\|\VV_\GG\|_\infty^2}{2\delta}\int_{\GG(t)}|y_+|^2 
		\end{aligned}
		\end{equation*}
		Inserting this into \eqref{l5:e1} we obtain
		\begin{equation*}
		\pa_t \int_{\GG(t)}|y_+|^2  \leq \frac{\|\VV_\GG\|_\infty^2}{\delta}\int_{\GG(t)}|y_+|^2 ,
		\end{equation*}
		and consequently, thanks to Gronwall's inequality,
		\begin{equation*}
		\int_{\GG(t)}|y_+(t)|^2  \leq \exp(t\|\VV_\GG\|_\infty^2/\delta)\int_{\GG_0}|y_+(0)|^2  = 0,
		\end{equation*}
		since $y_+(0) = 0$. This means that $y_+(x,t) = 0$ on $\GGG_T$ or, in other words, $y(x,t) \leq 0$ on $\GGG_T$.
	\end{proof}
	\begin{lemma}\label{comparison-Neumann}
		Let $u$ be a weak solution to 
		\begin{equation*}
		\begin{aligned}
		\dot{u} + u\na \cdot \VV_p - \dO\Delta u + \na\cdot(\J_\O u) &= f, \quad \text{ in } Q_T,\\
		\dO\na u \cdot \nu - uj &= g, \quad \text{ on } \GGG_T,\\
		u(0) &= u_0 \leq 0, &\quad \text{ in } \O_0,
		\end{aligned}
		\end{equation*}
		where $L^2_{L^2(\O)}\ni f \leq 0$ and $L^2_{L^2(\GG)} \ni g \leq 0$. Then $u(x,t) \leq 0$ in $Q_T$.
	\end{lemma}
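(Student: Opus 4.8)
The plan is to mimic the proof of Lemma \ref{comparison-surface}, now testing the bulk equation against the positive part of $u$ and using the boundary condition to absorb the flux term. Set $u_+ = \max\{0,u\}$. Since $u_0 \le 0$ we have $u_+(0) = 0$, and since $u \in L^2_{H^1(\Omega)}$ the truncation satisfies $u_+ \in L^2_{H^1(\Omega)}$ with $\na u_+ = \mathbf 1_{\{u>0\}}\na u$ and $u\,\na u_+ = u_+\na u_+ = \tfrac12 \na(u_+^2)$. Using $u_+$ as a test function (after a routine regularisation of $s\mapsto s_+$, exactly as tacitly done in Lemmas \ref{heat-regularity} and \ref{L-infinity-Neumann}) and repeating verbatim the computation leading to the energy identity in the proof of Lemma \ref{L-infinity-Neumann} --- the transport formula \eqref{eq:transp1} for $\int_{\O(t)}\dot u\,u_+$ and $\int_{\O(t)} u\,u_+\,\na\cdot\VV_p$, integration by parts in the Laplacian term, and the divergence theorem in the $\na\cdot(\J_\O u)$ term --- one arrives at
\begin{equation*}
\frac12\frac{d}{dt}\intO |u_+|^2 + \dO \intO |\na u_+|^2 = -\frac12\intG u_+^2\,\VV_\GG\cdot\nu - \intO u_+\,\VV_\O\cdot\na u_+ + \intG u_+\,g,
\end{equation*}
where the boundary contribution $\int_{\GG(t)} u_+^2\, j$ coming from $\na\cdot(\J_\O u)$ cancels exactly the term $\int_{\GG(t)} u_+^2\, j$ produced by inserting the boundary condition $\dO \na u\cdot\nu - uj = g$ into $-\dO\int_{\GG(t)} u_+\,\na u\cdot\nu$, and where we have used $\VV_p|_\GG\cdot\nu = \VV_\GG\cdot\nu$.

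Since $g \le 0$ and $u_+ \ge 0$, the last term is nonpositive and may be dropped. The term $-\intO u_+\,\VV_\O\cdot\na u_+$ is controlled by Young's inequality by $\tfrac{\dO}{4}\intO |\na u_+|^2 + C(T)\intO |u_+|^2$, and the surface term is absorbed using the modified trace theorem \cite[Theorem 1.5.1.10]{grisvard2011elliptic}, exactly as in Lemma \ref{L-infinity-Neumann}:
\begin{equation*}
\left| \frac12\intG u_+^2\,\VV_\GG\cdot\nu \right| \le \frac{\|\VV_\GG\|_\infty}{2}\intG |u_+|^2 \le \frac{\dO}{4}\intO |\na u_+|^2 + C(T)\intO |u_+|^2.
\end{equation*}
Hence $\frac{d}{dt}\intO |u_+|^2 \le C(T)\intO |u_+|^2$, and Gronwall's inequality together with $u_+(0) = 0$ forces $\intO |u_+(t)|^2 = 0$ for all $t\in[0,T]$, i.e. $u(x,t)\le 0$ in $Q_T$.

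The only genuinely delicate point is the justification that $u_+$ is an admissible test function in the weak formulation on the evolving domain --- that is, the chain rule for the material derivative and the resulting identity $\langle \dot u, u_+\rangle = \tfrac12\frac{d}{dt}\intO u_+^2 - \tfrac12\intO u_+^2\,\na\cdot\VV_p$ --- together with the measurability of the coincidence set $\{u>0\}$ in the moving frame; this is handled by the standard regularisation/passage-to-the-limit argument already relied upon in Lemmas \ref{heat-regularity} and \ref{L-infinity-Neumann}. Everything else is the verbatim bulk analogue of Lemma \ref{comparison-surface}, so I will keep the write-up short and refer back to those proofs for the repeated manipulations.
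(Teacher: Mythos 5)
Your proposal is correct and follows essentially the same route as the paper: test the equation with $u_+$, use the transport formula and integration by parts on the evolving domain, observe that the $u\,u_+\,j$ boundary contributions cancel and that $\intG u_+ g \le 0$ can be dropped, then absorb the remaining velocity and trace terms via Young's inequality and the modified trace theorem before concluding with Gronwall. The only differences are immaterial constant factors in the intermediate energy identity.
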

	\begin{proof}
		The proof is similar to Lemma \ref{comparison-surface}. Let $u_+ = \max\{0; u\} \geq 0$ and note that $u_+(0) = 0$.
		Multiplying the equation of $u$ by $u_+$ then integrating over $\O(t)$ leads  to
		\begin{equation*}
		\int_{\O(t)}[\dot{u} + u\na \cdot \VV_p - \dO\Delta u + \na\cdot(\J_\O u)]u_+  = \int_{\O(t)} fu_+  \leq 0.
		\end{equation*}
		To simplify the expression above we perform calculations similar to those in the previous proof. On the one hand, we have
		\begin{align*}
		\int_{\O(t)} \dot u u_+ = \dfrac{1}{2} \dfrac{d}{dt} \int_{\O(t)} |u_+|^2 - \dfrac{1}{2}\int_{\O(t)} u_+^2 \nabla \cdot \mathbf V_p 
		\end{align*}
		and 
		\begin{align*}
		-\delta_\Omega \int_{\O(t)} (\Delta u) u_+ = \delta_\Omega \int_{\O(t)} |\nabla u_+|^2 - \delta_\Omega \intG u_+ \nabla u \cdot \nu.
		\end{align*}
		On the other hand, for the terms involving the velocities, we obtain
		\begin{align*}
		\int_{\O(t)} u u_+ \nabla \cdot \mathbf V_p = \dfrac{1}{2}\int_{\O(t)} u_+^2 \nabla \cdot \mathbf V_p - \dfrac{1}{2} \int_{\O(t)} \nabla (u_+^2) \cdot \mathbf V_p + \dfrac{1}{2}\intG u_+^2 \mathbf V_p \cdot \nu.
		\end{align*}
		and
		\begin{align*}
		\int_{\O(t)} \nabla \cdot (\mathbf J_\Omega u) u_+ &= -\int_{\O(t)} u_+ \mathbf V_\Omega \cdot \nabla u + \int_{\O(t)} u_+ \mathbf V_p \cdot \nabla u + \intG u \, u_+\, \mathbf J_\Omega \cdot \nu \\
		&=\int_{\O(t)} u_+ \mathbf V_\Omega \cdot \nabla u +\dfrac{1}{2} \int_{\O(t)} \nabla(u_+^2) \cdot \mathbf V_p + \intG u \, u_+ \, j.
		\end{align*}
		Adding the four identities above and recalling the boundary condition $\delta_\Omega \nabla u\cdot  \nu - uj = g \leq 0$ leads  to
		\begin{align*}
		\dfrac{1}{2} \dfrac{d}{dt} \int_{\O(t)} |u_+|^2 + \delta_\Omega \int_{\O(t)} |\nabla u_+|^2 &\leq - \dfrac{1}{2} \intG u_+^2 \mathbf V_p \cdot \nu - \dfrac{1}{2} \int_{\O(t)} u_+ \mathbf V_\Omega \cdot \nabla u + \intG u_+ g \\
		&\leq - \dfrac{1}{2} \intG u_+^2 \mathbf V_p \cdot \nu - \dfrac{1}{2} \int_{\O(t)} u_+ \mathbf V_\Omega \cdot \nabla u \\
		&= - \dfrac{1}{2} \intG u_+^2 \mathbf V_\Gamma \cdot \nu - \dfrac{1}{2} \int_{\O(t)} u_+ \mathbf V_\Omega \cdot \nabla u.
		\end{align*}
		%
		%
		%
		Therefore we have
		\begin{equation*}
		\frac 12 \pa_t\int_{\O(t)}|u_+|^2  + \dO\int_{\O(t)}|\na u_+|^2  \leq \frac{\|\VV_\Gamma\|_\infty}{2} \int_{\GG(t)}|u_+|^2  + \|\VV_\O\|_{\infty}\int_{\O(t)}|u_+||\na u_+| .
		\end{equation*}
		Using the modified Trace inequality (see e.g. \cite[Theorem 1.5.1.10]{grisvard2011elliptic})
		\begin{equation*}
		\int_{\GG(t)}|u_+|^2  \leq \varepsilon \int_{\O(t)}|\na u_+|^2  + C_\varepsilon \int_{\O(t)}|u_+|^2 
		\end{equation*}
		and the Cauchy-Schwarz inequality
		\begin{equation*}
		\|\VV_\O\|_{\infty}\int_{\O(t)}|u_+||\na u_+|  \leq \varepsilon \int_{\O(t)}|\nabla u_+|^2  + C_\varepsilon\int_{\O(t)}|u_+|^2 ,
		\end{equation*}
		we get, by choosing $\varepsilon$ small enough, 
		\begin{equation*}
		\pa_t\int_{\O(t)}|u_+|^2  \leq C\int_{\O(t)}|u_+|^2 
		\end{equation*}
		and consequently $u_+(t) = 0$ since $u_+(0) = 0.$
	\end{proof}
	
	We are now ready to prove Theorem \ref{thm:main}.	
	\begin{proof}[Proof of Theorem \ref{thm:main}]
		We consider the case $f_1(u,w,z) + f_2(u,w,z) \leq 0$ in assumption \eqref{A3} and $f_2(u,w,z) \leq C(w^{\beta}+z^{\beta}+1)$ in assumption \eqref{A4} since the other cases are treated similarly. First of all, the assumption \eqref{A1} implies the nonnegativity of $u, w, z$ thanks to nonnegative initial data. By testing the equations of $u$, $w$ and $z$ with identity and integrating we easily obtain the following dissipation of masses 
		\begin{equation*}
		\pa_t\left[\int_{\O(t)}u  + \int_{\GG(t)}w  \right] \leq 0 \quad \text{ and } \quad \pa_t\left[\int_{\GG(t)}w  + \int_{\GG(t)}z  \right] \leq 0.
		\end{equation*}
		Combining these with the nonnegativity of solutions, we get the uniform bound
		\begin{equation}\label{L1-bound}
		\|u\|_{L^\infty_{L^1(\O)}} + \|w\|_{L^\infty_{L^1(\GG)}}+ \|z\|_{L^\infty_{L^1(\GG)}} \leq M
		\end{equation}
		where $M = \max\{\|u_0\|_{L^1(\Omega_0)} + \|w_0\|_{L^1(\GG_0)}; \|w_0\|_{L^1(\GG_0)} + \|z_0\|_{L^1(\GG_0)} \}$.	By summing the equation of $w$ and $z$ we get \eqref{sum_wz}, and therefore by Lemma \ref{duality-L2}
		\begin{equation*}
		\|w\|_{L^2_{L^2(\Gamma)}} + \|z\|_{L^2_{L^2(\Gamma)}} \leq C(T).
		\end{equation*}
		Moreover, from $f_2(u,w,z) \leq C(w^{\beta} + z^{\beta}+1)$ with $\beta < \frac{d+4}{d+2} < 2$, it follows from comparison principle in Lemma \ref{comparison-surface} that
		\begin{equation*}
		\|w\|_{L^{s_0}_{L^{s_0}(\Gamma)}} + \|z\|_{L^{s_0}_{L^{s_0}(\Gamma)}} \leq C(T)
		\end{equation*}
		for $s_0 = \frac{2}{\beta} >1$. We use this estimate and apply Lemmas  \ref{heat-regularity}, \ref{comparison-surface} to the equation for $w$ and recall that $f_2(u,w,z)\leq C(w^\beta + z^\beta + 1)$ to get
		\begin{equation*}
		\|w\|_{L^{s_1}_{L^{s_1}(\GG)}} \leq C(T)  \quad \text{ where } \quad s_1 = \begin{cases}
		<\infty &\text{ arbitrary if } s_0 \geq (d+2)/2,\\
		< \frac{(d+2)s_0}{\beta(d+2-s_0)} &\text{ arbitrary if } s_0 < (d+2)/2.
		\end{cases}
		\end{equation*}
		From Lemma \ref{duality-1} it follows
		\begin{equation*}
		\|z\|_{L^{s_1}_{L^{s_1}(\GG)}} \leq C(T).
		\end{equation*}
		Repeating this procedure we construct a sequence $\{s_n\}$ such that
		\begin{equation*}
		s_{n+1} = \begin{cases}
		<\infty &\text{ arbitrary if } s_n \geq (d+2)/2,\\
		< \frac{(d+2)s_n}{\beta(d+2-s_n)} &\text{ arbitrary if } s_n < (d+2)/2.
		\end{cases}
		\end{equation*}
		Since $s_0 = \frac{2}{\beta} > \frac{2(d+2)}{d+4}$, it is easy to see that $s_n$ is strictly increasing, and since $s_{n+1}/s_n > 1$, after some finite iteration we get $s_n \geq (d+2)/2$, and therefore $s_{n+1} < +\infty$ arbitrary. That means we have
		\begin{equation}\label{wz-Lr}
		\|w\|_{L^{s}_{L^{s}(\GG)}} + \|z\|_{L^{s}_{L^{s}(\GG)}} \leq C(T) \quad \text{ for all } \quad s\in [1,\infty).
		\end{equation}
		From this and Lemmas \ref{comparison-surface}, \ref{L-infinity-manifold} we obtain the $L^\infty$ estimate 
		\begin{equation}\label{Linf-w}
		\|w\|_{L^\infty_{L^\infty(\GG)}} \leq C(T).
		\end{equation}
		Consider now the equation 
		\newcommand{\bu}{\overline{u}}
		\begin{equation*}
		\left\{
		\begin{aligned}
		\dot{\bu} + \bu \na \cdot \VV_p - \dO \Delta \bu + \na\cdot(\J_\O \bu) &= 0, &&\text{ in } Q_T,\\
		\dO \na\bu \cdot \nu - \bu j &= C(w^{\alpha} + z^{\alpha} + 1), &&\text{ on } \GGG_T,\\
		\bu(x,0) &=u_0(x), &&\text{ in } \O_0,
		\end{aligned}\right.
		\end{equation*}
		and note $\|w^{\alpha} + z^{\alpha} + 1\|_{L^r_{L^r(\GG)}} \leq C(T)(\|w\|_{L^{\alpha r}_{L^{\alpha r}(\GG)}}^{\alpha} + \|z\|_{L^{\alpha r}_{L^{\alpha r}(\GG)}}^{\alpha} + 1) \leq C(T)$ for any $r\in [1,\infty)$. We can apply Lemma \ref{L-infinity-Neumann} to have
		\begin{equation*}
		\|\bu\|_{L^\infty_{L^\infty(\O)}} \leq C(T),
		\end{equation*}
		and then by assumption \eqref{A4_1} and the comparison in Lemma \ref{comparison-Neumann} it follows 
		\begin{equation}\label{Linf-u}
		\|u\|_{L^\infty_{L^\infty(\O)}} \leq C(T).
		\end{equation}
		In order to prove the $L^\infty$-bound of $z$, we first show that
		\begin{equation*}
		\|u\|_{L^2_{H^1(\O)}} \leq C(T).
		\end{equation*}
		We multiply the equation for $u$ by $u$ and integrate over $\O(t)$. Note that this is the same equation as in Lemma \ref{comparison-Neumann} with $f=0$ and $g=f_1$. Reusing the calculation in the proof of Lemma \ref{comparison-Neumann} we then obtain
		\begin{align}\label{t:e1}
		\dfrac{1}{2} \dfrac{d}{dt} \int_{\O(t)} |u|^2 + \delta_\Omega \int_{\O(t)} |\nabla u|^2 &= - \dfrac{1}{2} \intG u^2 \mathbf V_\Gamma \cdot \nu + \intG u f_1(u,w,z) - \dfrac{1}{2} \int_{\O(t)} u \mathbf V_\Omega \cdot \nabla u  \\
		&=: I_1 + I_2 + I_3,
		\end{align}
		We estimate the terms on the last right hand side of \eqref{t:e1}. 
		\begin{equation}\label{I1}
		|I_1| \leq \frac 12 \intG |u|^2|\VV_\Gamma\cdot \nu|  \leq \|\VV\|_\infty \, \|u\|_{L^2(\GG(t))}^2 \leq {C(T)}\|u\|_{L^2(\GG(t))}^2,
		\end{equation}
		due to $(\VV_\Gamma|_{\GG}\cdot \nu)\nu = \VV$. For $I_2$ we use \eqref{A4_1} to estimate
		\begin{equation}\label{I2}
		\begin{aligned}
		I_2 &\leq C\intG u(w^{\alpha} + z^{\alpha} + 1) \\
		& \leq C(T)(1 + \|w\|_{L^{2\alpha}(\GG(t))}^{2\alpha} + \|z\|_{L^{2\alpha}(\GG(t))}^{2\alpha} + \|u\|_{L^2(\GG(t))}^2)
		\end{aligned}
		\end{equation}
		thanks to \eqref{Linf-w} and \eqref{wz-Lr}. With $I_3$ we use Cauchy-Schwarz's inequality
		\begin{equation}\label{I3}
		|I_3| \leq {\|\VV_\O\|_{\infty}}\int_{\O(t)}|u||\na u|  \leq {C(T)}\|u\|_{L^2(\O(t))}^2 + \frac{\dO}{2}\int_{\O(t)}|\na u|^2  \leq C(T) + \frac{\dO}{2}\int_{\O(t)}|\na u|^2 ,
		\end{equation}
		due to \eqref{Linf-u}. Using \eqref{I1}, \eqref{I2} and \eqref{I3} into \eqref{t:e1} we get
		\begin{equation*}
		\pa_t\|u\|_{L^2(\O(t))}^2 + \int_{\O(t)}|\na u|^2  \leq C(T)\left(1 + \|w\|_{L^{2\alpha}(\GG(t))}^{2\alpha} + \|z\|_{L^{2\alpha}(\GG(t))}^{2\alpha} + \|u\|_{L^2(\GG(t))}^2\right).
		\end{equation*}
		Adding both sides with $\|u\|_{L^2(\O(t))}^2$ then integrating on $(0,T)$ yields 
		\begin{equation}\label{t:e2}
		\begin{aligned}
		\int_0^T\|u\|_{H^1(\O(t))}^2  &\leq \|u_0\|_{L^2(\O_0)}^2+ C(T)\left[T + \|u\|_{L^2_{L^2(\O)}}^2 + \|u\|_{L^2_{L^2(\GG)}}^2 + \|w\|_{L^{2\alpha}_{L^{2\alpha}(\GG)}}^{2\alpha} + \|z\|_{L^{2\alpha}_{L^{2\alpha}(\GG)}}^{2\alpha}\right]\\
		&\leq C(T)\left[1+\int_0^T\|u\|_{L^2(\GG(t))}^2  \right]
		\end{aligned}
		\end{equation}
		using \eqref{wz-Lr} and \eqref{Linf-u}. Now we use the {interpolation} inequality
		\begin{equation*}
		\|u\|_{L^2(\GG(t))}^2 \leq {C(T)}\|u\|_{H^1(\O(t))}\|u\|_{L^2(\O(t))}
		\end{equation*}
		to estimate
		\begin{equation*}
		\begin{aligned}
		C(T)\int_0^T\|u\|_{L^2(\GG(t))}^2  &\leq C(T)\int_0^T\|u\|_{H^1(\O(t))}\|u\|_{L^2(\O(t))} \\
		&\leq \frac 12 \int_0^T\|u\|_{H^1(\O(t))}^2  + \frac{C(T)^2}{2}\int_0^T\|u\|_{L^2(\O(t))}^2 \\
		&\leq \frac 12 \int_0^T\|u\|_{H^1(\O(t))}^2  + C(T)
		\end{aligned}
		\end{equation*}
		where we used \eqref{Linf-u}. Putting this estimate into \eqref{t:e2} we get the desired estimate
		\begin{equation}\label{H1-u}
		\int_0^T\|u\|_{H^1(\O(t))}^2  \leq C(T).
		\end{equation}
		Now for any $r\geq 2$ we write $r = 2\lambda$ and estimate
		\begin{equation*}
		\begin{aligned}
		\|u\|_{L^r_{L^r(\GG)}}^r &= \int_{\GGG_T}|u^\lambda|^2 \\
		&\leq {C(T)}\int_{Q_T}\left[|\na(u^\lambda)|^2 + |u^\lambda|^2\right]\dV\\
		&= {C(T)}\int_{Q_T}[\lambda^2|u|^{2(\lambda-1)}|\nabla u|^2 + |u|^{2\lambda}]\dV\\
		&\leq {C(T)}\lambda^2\|u\|_{L^\infty_{L^\infty(\O)}}^{2(\lambda-1)}\int_0^T\|u\|_{H^1(\O(t))}^2  + {C(T)}\|u\|_{L^{2\lambda}_{L^{2\lambda}(\O)}}^{2\lambda}\\
		&\leq C(T)
		\end{aligned}
		\end{equation*}
		using \eqref{Linf-u} and \eqref{H1-u}. Recall that we have $\|u\|_{L^r_{L^r(\GG)}}, \|w\|_{L^r_{L^r(\GG)}}, \|z\|_{L^r_{L^r(\GG)}} \leq C(T)$ for all $r\in [1,\infty)$. Now using the assumption \eqref{A5} that $f_3(u,w,z)$ is at most polynomial in $u, w$ and $z$, we imply that $\|f_3(u,w,z)\|_{L^r_{L^r(\GG)}} \leq C(T)$ for all $r\in [1,\infty)$. Therefore, applying Lemma \ref{L-infinity-manifold} to the equation of $z$ leads  finally to
		\begin{equation}\label{Linf-z}
		\|z\|_{L^\infty_{L^\infty(\GG)}} \leq C(T).
		\end{equation}
	\end{proof}
	
	\section{Convergence to equilibrium}\label{sec:convergence}
	As mentioned in the introduction, the convergence to equilibrium of \eqref{sys} stated in Theorem \ref{thm:convergence} follows from a vector-valued functional inequality, which can also be of independent interest. This section is therefore split into two subsections, where in the first one we show the functional inequality \eqref{e1} while in the second one we utilise this inequality to prove Theorem \ref{thm:convergence}.
	
	\subsection{Vector-valued functional inequalities}\label{subs:entropy_inequality}
	The convergence to equilibrium relies essentially on the following entropy-entropy dissipation estimate. It is again emphasised that this is a pure vector-valued functional inequality, which does not use any properties of the system \eqref{sys}.
	We are going to utilise the following inequalities for a bounded domain with smooth boundary $\Gamma = \pa\Omega$,
	\begin{itemize}
		\item the Log-Sobolev-Inequality, see e.g. \cite{arnold2001convex},
		\begin{equation}\label{LSI}
		\intOO \frac{|\na f|^2}{f} \ge C_{\text{LSI,$\Omega$}}\intOO f\log \frac{f}{\overline f}, \quad \intGG \frac{|\na_\GG g|^2}{g} \ge C_{\text{LSI,$\GG$}}\intGG g\log\frac{g}{\overline g},
		\end{equation}	
		\item the Trace-Poincar\'e-Wirtinger inequality, see e.g. \cite{grisvard2011elliptic},
		\begin{equation}\label{TrPW}
		\intOO|\na f|^2 \ge C_{\text{TrPW,$\Omega$}}\intGG |f - \overline f|^2,
		\end{equation}
		\item and the Poincar\'e-Wirtinger inequality, see e.g. \cite{taylor2013partial},
		\begin{equation}\label{PW}
		\intGG |\na_\GG g|^2 \ge C_{\text{PW,$\GG$}}\intGG |g - \overline g|^2
		\end{equation}
		where the constants do not depend on $f$ and $g$, and recalling
		\begin{equation*}
		\overline f = \frac{1}{\measv(\Omega)}\intOO f(x), \quad \overline g = \frac{1}{\measb(\GG)}\intGG g(x).
		\end{equation*}	 
	\end{itemize}
	We define, for given $(\uinf,\zinf,\winf)\in (0,\infty)^3$,
	\begin{equation}\label{def_entropy}
	E[u,w,z] = \intOO u\log\frac{u}{\uinf} - u + \uinf   + \intGG w\log\frac{w}{\winf}-w+\winf   + \intGG z\log\frac{z}{\zinf}-z+\zinf
	\end{equation}
	and
	\begin{equation}\label{def_ep}
	\wt{D}[u,w,z] = \intOO\frac{|\na u|^2}{u}  + \intGG\frac{|\naG w|^2}{w}  + \intGG\frac{|\naG z|^2}{z}  + \intGG(z-uw)\log\frac{z}{uw} .
	\end{equation}
	\begin{proposition}\label{pro:eed}
		Let $\Omega \subset \mathbb R^n$ be a bounded domain with smooth boundary $\Gamma = \partial\Omega$, e.g. $\Gamma$ is of class $C^{2+\alpha}$ for some $\alpha > 0$. Let $(\uinf, \zinf, \winf)\in (0,\infty)^3$ satisfy
		\begin{equation*}
		\zinf = \uinf\winf. 
		\end{equation*}
		Let $E[u,w,z]$ and $D[u,w,z]$ be defined as in \eqref{def_entropy} and \eqref{def_ep}. There exists a constant $\lambda >0$ depending only the parameters $\dO, \dG, \dGG$, the triple $(\uinf,\winf,\zinf)$, and the constants $C_{{\normalfont LSI},\Omega}$, $C_{\text{LSI},\Gamma}$, $C_{\text{TrPW},\Omega}$, $C_{\text{PW},\Gamma}$ in \eqref{LSI}, \eqref{TrPW}, \eqref{PW} such that
		\begin{equation*}
		\wt{D}[u,w,z] \geq \lambda E[u,w,z]
		\end{equation*}
		for all functions $u, w, z$ satisfying the conservation laws
		\begin{equation}\label{cons1}
		\int_{\Omega}u(x) + \int_{\Gamma}z(x) = \measv(\Omega)\uinf + \measb(\Gamma)\zinf,
		\end{equation}
		\begin{equation}\label{cons2}
		\int_{\Gamma}w(x) + \int_{\Gamma}z(x) = \measb(\Gamma)\winf + \measb(\Gamma)\zinf.
		\end{equation}
	\end{proposition}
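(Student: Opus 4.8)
The plan is to run the classical entropy method: split the relative entropy into a part that vanishes on spatially homogeneous states and a finite-dimensional ``macroscopic'' part depending only on the averages, absorb the first part by the Log--Sobolev inequalities, and reduce the rest — via the Poincar\'e-type inequalities and the reaction term — to an elementary inequality for three numbers constrained by the two conservation laws. We may assume $\wt D[u,w,z]<\infty$, the inequality being trivial otherwise. Write $\overline u = \measv(\Omega)^{-1}\intOO u$, $\overline w = \measb(\GG)^{-1}\intGG w$, $\overline z = \measb(\GG)^{-1}\intGG z$, put $\psi(s):=s\log s-s+1\ge 0$, and use $\langle f\rangle_\Omega := \measv(\Omega)^{-1}\intOO f$, $\langle f\rangle_\GG := \measb(\GG)^{-1}\intGG f$. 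A direct computation gives $E[u,w,z]=E_1[u,w,z]+E_2[u,w,z]$ with $E_1 = \intOO u\log\frac{u}{\overline u} + \intGG w\log\frac{w}{\overline w} + \intGG z\log\frac{z}{\overline z}\ge 0$ (by Jensen) and $E_2 = \measv(\Omega)\uinf\,\psi(\overline u/\uinf) + \measb(\GG)\winf\,\psi(\overline w/\winf) + \measb(\GG)\zinf\,\psi(\overline z/\zinf)\ge 0$. Using half of each Fisher-information term in $\wt D$ together with \eqref{LSI} yields $\wt D \ge \tfrac12 c\, E_1 + \wt D_{\mathrm{hom}}$, where $c = \min\{C_{\mathrm{LSI},\Omega}, C_{\mathrm{LSI},\GG}\}$ and
\[
\wt D_{\mathrm{hom}} := \tfrac12\Big(\intOO\tfrac{|\na u|^2}{u} + \intGG\tfrac{|\naG w|^2}{w} + \intGG\tfrac{|\naG z|^2}{z}\Big) + \intGG(z - uw)\log\tfrac{z}{uw}.
\]
It therefore remains to prove $E_2 \le C\,\wt D_{\mathrm{hom}}$.

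For that, first note that \eqref{cons1}--\eqref{cons2} together with nonnegativity confine $\overline u,\overline w,\overline z$ to a bounded set determined only by $\uinf,\winf,\zinf,\measv(\Omega),\measb(\GG)$; since $\psi(s)\le C_R(s-1)^2$ on any bounded set, $E_2 \le C\big((\overline u - \uinf)^2 + (\overline w - \winf)^2 + (\overline z - \zinf)^2\big)$. Next rewrite $\intOO\frac{|\na u|^2}{u} = 4\intOO|\na\sqrt u|^2$, and likewise for $w,z$, and deploy (i) the Poincar\'e--Wirtinger inequality on $\Omega$ (classical for $C^2$ domains, or obtainable from the first inequality in \eqref{LSI}), (ii) the trace inequality \eqref{TrPW}, and (iii) the Poincar\'e inequality \eqref{PW}, splitting the remaining Fisher budget among these. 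This controls $\|\sqrt u - \langle\sqrt u\rangle_\Omega\|_{L^2(\Omega)}$, $\|\sqrt u|_\GG - \langle\sqrt u\rangle_\Omega\|_{L^2(\GG)}$, $\|\sqrt w - \langle\sqrt w\rangle_\GG\|_{L^2(\GG)}$ and $\|\sqrt z - \langle\sqrt z\rangle_\GG\|_{L^2(\GG)}$ by $\wt D_{\mathrm{hom}}$, while the elementary inequality $(a-b)\log(a/b)\ge 4(\sqrt a-\sqrt b)^2$ controls $\|\sqrt z - \sqrt{uw}\|_{L^2(\GG)}$ as well. Crucially, the product $\sqrt{uw}$ on $\GG$ is handled by a covariance estimate,
\[
\big|\langle\sqrt{uw}\rangle_\GG - \langle\sqrt u|_\GG\rangle_\GG\,\langle\sqrt w\rangle_\GG\big| \le \measb(\GG)^{-1}\,\|\sqrt u|_\GG - \langle\sqrt u|_\GG\rangle_\GG\|_{L^2(\GG)}\,\|\sqrt w - \langle\sqrt w\rangle_\GG\|_{L^2(\GG)},
\]
a product of two $L^2(\GG)$-deviations, which is exactly what dispenses with any $L^\infty$-bound on $u,w,z$.

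Setting $x_1 = \langle\sqrt u\rangle_\Omega$, $x_2 = \langle\sqrt w\rangle_\GG$, $x_3 = \langle\sqrt z\rangle_\GG$, and using in addition $0\le \overline u - x_1^2 = \langle(\sqrt u - x_1)^2\rangle_\Omega$ (bounded by the interior Poincar\'e term) and the analogous bounds for $\overline w - x_2^2$, $\overline z - x_3^2$, the estimates above assemble into $|\overline z - \overline u\,\overline w| \le C\,\wt D_{\mathrm{hom}}^{1/2}$. Finally, using the \emph{exact} conservation laws to eliminate two variables — $\measv(\Omega)(\overline u - \uinf) = -\measb(\GG)(\overline z - \zinf)$ and $\overline w - \winf = -(\overline z - \zinf)$, so that all three deviations are bounded multiples of $\delta := \overline z - \zinf$ — and substituting into $\overline z - \overline u\,\overline w$ gives, thanks to $\zinf = \uinf\winf$, the identity $\overline z - \overline u\,\overline w = \delta\big[(1 + \uinf + \tfrac{\measb(\GG)}{\measv(\Omega)}\winf) - \tfrac{\measb(\GG)}{\measv(\Omega)}\delta\big]$, whose bracket stays $\ge 1$ on the range of $\delta$ allowed by $\overline u,\overline w,\overline z \ge 0$ (since there $\delta\le\winf$). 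Hence $\delta^2 \le C|\overline z - \overline u\,\overline w|^2 \le C\,\wt D_{\mathrm{hom}}$, so $E_2 \le C\delta^2 \le C\,\wt D_{\mathrm{hom}}$, and combining with the Log--Sobolev step yields $\wt D \ge \lambda E$ with $\lambda$ depending only on the quantities listed in the statement.

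The main obstacle is the middle step: controlling the nonlinear surface reaction contribution and, above all, the product $uw$ on $\GG$ in terms of the averages \emph{without} invoking any $L^\infty$-bound on the functions. This forces one to keep careful track of how the (finite) Fisher-information budget is allocated between the interior Poincar\'e inequality, the trace inequality and the surface Poincar\'e inequality, and to bound every cross term by Cauchy--Schwarz at the level of $L^2$-deviations; checking that all resulting constants are independent of $u,w,z$ — which is where the $t$-independence of the functional-inequality constants in \eqref{aLSI}--\eqref{aTrPW} is essential when this Proposition is later applied along the flow — also requires some bookkeeping, but is otherwise routine.
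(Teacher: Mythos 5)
Your proposal is correct and follows the same overall architecture as the paper's proof (Log--Sobolev to absorb the relative entropy with respect to averages, then Poincar\'e/trace-Poincar\'e plus the reaction term to control the averaged quantities, then a finite-dimensional inequality under the conservation laws), but it differs genuinely in the two technical steps where the work actually happens. First, to control the product term on $\GG$ without an $L^\infty$ bound, the paper truncates: it introduces the set $S_L=\{|\delta_U|,|\delta_W|,|\delta_Z|\le L\}$, expands $Z-UW$ around the averages pointwise on $S_L$, and on the complement uses that $|S_L^\top|$ is small relative to the $L^2$ deviations. You instead use the covariance identity $\langle fg\rangle_\GG-\langle f\rangle_\GG\langle g\rangle_\GG=\langle(f-\langle f\rangle_\GG)(g-\langle g\rangle_\GG)\rangle_\GG$ and Cauchy--Schwarz, which is cleaner and avoids the truncation parameter entirely; the price is that this term (like $\ou-\langle\sqrt u\rangle_\Omega^2$, etc.) is quadratic in the deviations, i.e.\ $O(\wt{D}_{\mathrm{hom}})$ rather than $O(\wt{D}_{\mathrm{hom}}^{1/2})$, so your claimed bound $|\oz-\ou\,\ow|\le C\wt{D}_{\mathrm{hom}}^{1/2}$ requires the (standard, but unstated) reduction to $\wt{D}_{\mathrm{hom}}\le 1$, using that $E_2$ is a priori bounded by a constant depending only on the masses -- the paper's proof faces the same issue and resolves it via the bound $|\overline Z-\overline U\,\overline W|^2\le C(M_1,M_2)$ in its Step 2. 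Second, for the finite-dimensional inequality the paper writes $\overline{U^2}=U_\infty^2(1+\mu_U)^2$ etc.\ and argues via the signs of $\mu_U,\mu_W,\mu_Z$ forced by the conservation laws; you eliminate two variables outright using the exact linear relations $\measv(\Omega)(\ou-\uinf)=-\measb(\GG)(\oz-\zinf)$ and $\ow-\winf=-(\oz-\zinf)$, reducing everything to the single parameter $\delta=\oz-\zinf$ and checking that the resulting bracket stays $\ge 1$ on the admissible range $\delta\le\winf$. Your elimination argument is more direct and, in my view, easier to verify; both yield a constant $\lambda$ depending only on the stated quantities.
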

	\begin{proof}
		It is noted that all constants in this lemma depend on the domains $\Omega$ and $\Gamma$ only through their measures $\measv(\Omega)$ and $\measb(\Gamma)$. Using the Logarithmic-Sobolev-Inequality we have 
		\begin{equation*}
		\frac{\dO}{4}\intOO\frac{|\na u|^2}{u}  \geq C_1\intOO u\log\frac{u}{\ou} , \quad \frac{\dG}{4}\intGG\frac{|\naG w|^2}{w}  \geq C_2\intGG w\log\frac{w}{\ow} , 
		\end{equation*}
		and
		\begin{equation*}
		\frac{\dGG}{4}\intGG \frac{|\naG z|^2}{z}  \geq C_3\intGG z\log\frac{z}{\oz},
		\end{equation*}
		where
		\begin{equation*}
		C_1 = \frac{\delta_\Omega}{4}C_{\text{LSI},\Omega}, \quad C_2 = \frac{\dG}{4}C_{\text{LSI},\GG}, \quad C_3 = \frac{\dGG}{4}C_{\text{LSI},\GG},
		\end{equation*}
		and
		\begin{equation*}
		\ou = \frac{1}{\measv(\Omega)}\intOO u(x),\quad \oz = \frac{1}{\measb(\Gamma)}\intGG z(x), \quad  \ow = \frac{1}{\measb(\Gamma)}\intGG w(x).
		\end{equation*}
		By direct computations, we have
		\begin{equation}\label{eb1}
		\begin{aligned}
		E[u,w,z] &= \intOO u\log\frac{u}{\ou}  + \intOO u\log\frac{\ou}{\uinf} - u + \uinf \\
		&\quad + \intGG w\log\frac{w}{\ow}  + \intGG w\log\frac{\ow}{\winf} - w + \winf  \\
		&\quad + \intGG z\log\frac{z}{\oz}  + \intGG z\log\frac{\oz}{\zinf} - z + \zinf  ,
		\end{aligned}
		\end{equation}
		so that comparing the expressions above we have
		\begin{align}\label{eq:dtilde2}
		\wt{D}[u,w,z] \geq C(E[u,w,z] - \overline{E}[u,w,z]),
		\end{align}
		where we define the remaining term
		\begin{align*}
		\overline{E}[u,w,z]:= \intOO u\log\frac{\ou}{\uinf} - u+\uinf   + \intGG w\log\frac{\ow}{\winf}-w+\winf  + \intGG z\log\frac{\oz}{\zinf}-z+\zinf .
		\end{align*}
		It then remains to control $\overline E[u,w,z]$. 				
		The rest of the proof is divided in several steps. Before going further, we introduce for convenience the square root notation
		\begin{equation*}
		U = \sqrt{u}, \quad W = \sqrt{w}, \quad Z = \sqrt{z}, \quad U_\infty = \sqrt{\uinf}, \quad W_\infty = \sqrt{\winf}, \quad Z_\infty = \sqrt{\zinf}
		\end{equation*}
		and their respective averages
		\begin{equation*}
		\overline{U} = \frac{1}{\measv(\Omega)|}\intOO U  , \quad \overline{W}=\frac{1}{\measb(\GG)|}\intGG W , \quad \overline{Z} = \frac{1}{\measb(\GG)}\intGG Z .
		\end{equation*}
		Because of the conservation laws \eqref{cons1} and \eqref{cons2}, there are $M_1, M_2>0$ depending only on $\uinf, \winf, \zinf$ such that
		\begin{equation*}
		\overline u + \overline z \le M_1, \quad \overline{w} + \overline{z} \le M_2.
		\end{equation*}
		
		\noindent\underline{Step 1.} We show that there exists a constant $C>0$ such that
		\begin{equation}\label{eb2}
		\overline{E}[u,w,z] \leq C\left(|\sqrt{\overline{U^2}} - U_\infty|^2 + |\sqrt{\overline{W^2}} - W_\infty|^2 + |\sqrt{\overline{Z^2}} - Z_\infty|^2\right).
		\end{equation} 
		We consider the function
		\begin{equation*}
		\Phi(x,y) = \frac{x\log(x/y) - x + y}{(\sqrt x - \sqrt y)^2}
		\end{equation*}
		which is continuous on $(0,\infty)^2$ and $\Phi(\cdot, y)$ is increasing for any fixed $y>0$. From that, we have
		\begin{align*}
		\intOO u\log\frac{\ou}{\uinf}-u+\uinf   &=\measv(\O)\Phi(\ou,\uinf)(\sqrt{\overline{U^2}} - U_\infty)^2\\
		&\leq \measv(\O)\Phi\left(\frac{M_1}{\measv(\O)}, \uinf\right)(\sqrt{\overline{U^2}}-U_\infty)^2
		\end{align*}
		and similarly
		\begin{align*}
		\intGG w\log\frac{\ow}{\winf}-w+\winf   \leq C(M_2, \measb(\Gamma))(\sqrt{\overline{W^2}}-W_\infty)^2,
		\end{align*}
		and
		\begin{align*}
		\intGG z\log\frac{\oz}{\zinf}-z+\zinf  \leq C(M_2,\measb(\Gamma) )(\sqrt{\overline{Z^2}}-Z_\infty)^2.
		\end{align*}
		Hence \eqref{eb2} is proved.
		
		\medskip
		\noindent\underline{Step 2.} We next prove
		\begin{equation}\label{eb3}
		\wt{D}[u,w,z] \geq C\left(\overline{Z} - \overline{U}\,\overline{W}\right)^2.
		\end{equation}
		For convenience, we introduction the deviation to averages,
		\begin{equation*}
		\delta_U(x) = U(x) - \overline{U}, \quad \delta_W(x) = W(x) - \overline{W}, \quad \delta_Z(x) = Z(x) - \overline{Z}.
		\end{equation*}
		By using $\frac{|\na u|^2}{u} = 4|\na U|^2$ and $\int_{\O(t)}(U - \overline{U})  = 0$, we can apply Trace-Poincar\'e-Wirtinger's inequality \eqref{TrPW} to have
		\begin{equation*}
		\intOO \frac{|\na u|^2}{u}  = 4\|\na U\|_{L^2(\O)}^2 \geq 4C_{\text{TrPW},\Omega}\|U - \overline{U}\|_{L^2(\GG)}^2 = 4C_{\text{TrPW},\Omega}\|\delta_U\|_{L^2(\GG)}^2.
		\end{equation*}
		The Poincar\'e-Wirtinger inequality on $\Gamma$ in \eqref{PW} gives
		\begin{equation*}
		\intGG\frac{|\naG w|^2}{w}  \geq C_{\text{PW},\GG}\|\delta_W\|_{L^2(\GG)}^2, \quad \text{and} \quad \intGG \frac{|\naG z|^2}{z}  \geq C_{\text{PW},\GG}\|\delta_Z\|_{L^2(\GG))}^2.
		\end{equation*}
		Combining these estimates with the elementary inequality $(x-y)\log(x/y) \geq (\sqrt x - \sqrt y)^2$, we obtain
		\begin{equation*}
		\wt{D}[u,w,z] \geq C(\|\delta_U\|_{L^2(\GG)}^2+\|\delta_W\|_{L^2(\GG)}^2 + \|\delta_Z\|_{L^2(\GG)}^2 + \|Z - UW\|_{L^2(\GG)}^2).
		\end{equation*}
		Fix a constant $L>0$ and define the set
		\begin{equation*}
		S_L = \{x\in\GG: |\delta_U(x)|, |\delta_W(x)|, |\delta_Z(x)| \leq L \} \quad \text{ and } \quad S_L^\top = \GG\backslash S_L.
		\end{equation*}
		In the following we will use the bounds
		\begin{align*}
		\overline{U} &= \measv(\O)^{-1}\int_{\O}U  \leq \sqrt{\measv(\O)}^{-1}\left(\int_{\Omega}U^2 \right)^{1/2}\\
		&= \sqrt{\measv(\O)}^{-1}(\sqrt{\measv(\O)}\,\ou)^{1/2} \leq \sqrt{M_1},
		\end{align*}
		\begin{equation*}
		\overline{W}\leq \sqrt{M_2}, \qquad \overline{Z} \leq \sqrt{M_1}.
		\end{equation*}
		We first estimate using the elementary inequality $(x-y)^2 \geq \frac 12 x^2 - y^2$,
		\begin{equation}\label{eb6}
		\begin{aligned}
		&\|Z - UW\|_{L^2(\GG)}^2 \geq\int_{S_L}|(\overline{Z}+\delta_Z) - (\overline{U}+\delta_U)(\overline{W} + \delta_W)|^2 \\
		&\geq \frac 12\int_{S_L}|\overline{Z} - \overline{U}\,\overline{W}|^2  - \int_{S_L}|\delta_Z - \delta_U(\overline{W} + \delta_W) - \delta_W \overline{U}|^2 \\
		&\geq \frac 12\int_{S_L}|\overline{Z} - \overline{U}\,\overline{W}|^2  - C(L,M_1,M_2)\int_{S_L}(|\delta_U|^2 + |\delta_W|^2 + |\delta_Z|^2) \\
		&\geq \frac 12\int_{S_L}|\overline{Z} - \overline{U}\,\overline{W}|^2  - C(L,M_1,M_2)(\|\delta_U\|_{L^2(\GG)}^2+\|\delta_W\|_{L^2(\GG)}^2+\|\delta_Z\|_{L^2(\GG)}^2).
		\end{aligned}
		\end{equation}
		On the other hand, using the fact that $|\delta_U|+|\delta_W|+|\delta_Z| \geq L$ on $S_L^\top$ we have
		\begin{equation}\label{eb7}
		\begin{aligned}
		\|\delta_U\|_{L^2(\GG))}^2 + \|\delta_W\|_{L^2(\GG)}^2 + \|\delta_Z\|_{L^2(\GG)}^2&\geq \frac 13\int_{S_L^\top}(|\delta_U| + |\delta_W| + |\delta_Z|)^2 \\
		&\geq \frac{L^2}{3}|S_L^\top|\\
		&\geq \frac{L^2}{3C(M_1,M_2)}\int_{S_L^\top}|\overline{Z}-\overline{U}\,\overline{W}|^2 
		\end{aligned}
		\end{equation}
		where we used $|\overline{Z}-\overline{U}\overline{W}|^2 \leq C(M_1,M_2)$ at the last step.
		
		Combining \eqref{eq:dtilde2}, \eqref{eb6} and \eqref{eb7} we have, for all $\theta \in(0,1)$,
		\begin{align*}
		\wt{D}[u,w,z]&\geq C(\|\delta_U\|_{L^2(\GG)}^2 + \|\delta_W\|_{L^2(\GG)}^2 + \|\delta_Z\|_{L^2(\GG)}^2 + \theta\|Z - UW\|_{L^2(\GG)}^2)\\
		&\geq C(L,M_1,M_2)\int_{S_L^\top}|\overline{Z}- \overline{U}\,\overline{W}|^2  + C\int_{S_L}|\overline{Z}-\overline{U}\,\overline{W}|^2 \\
		&\quad + C(1-C\theta)(\|\delta_U\|_{L^2(\GG)}^2+\|\delta_W\|_{L^2(\GG)}^2+\|\delta_Z\|_{L^2(\GG)}^2)\\
		&\geq C(L,M_1,M_2)\measb(\GG)(\overline{Z} - \overline{U}\,\overline{W})^2
		\end{align*}
		where we used $|S_L| + |S_L^\top| = \measb(\GG)$ and chose $\theta$ small enough. This finishes {Step 2.}
		
		\medskip
		\noindent\underline{Step 3.} There exists a constant $C>0$ such that
		\begin{equation}\label{eb4}
		\wt{D}[u,w,z] \geq C\left(\sqrt{\overline{Z^2}}-\sqrt{\overline{U^2}}\sqrt{\overline{W^2}} \right)^2.
		\end{equation}
		From {Step 2}, it remains to show that
		\begin{equation*}
		\|\delta_U\|_{L^2(\O)}^2 + \|\delta_W\|_{L^2(\GG)}^2 + \|\delta_Z\|_{L^2(\GG)}^2 + (\overline{Z}-\overline{U}\,\overline{W})^2 \geq C\left(\sqrt{\overline{Z^2}} - \sqrt{\overline{U^2}}\sqrt{\overline{W^2}}\right).
		\end{equation*}
		By definition
		\begin{equation*}
		\|\delta_U\|_{L^2(\O)}^2 = \measv(\Omega)(\overline{U^2}- \overline{U}^2) = \measv(\Omega)(\sqrt{\overline{U^2}} - \overline{U})(\sqrt{\overline{U^2}} + \overline{U}),
		\end{equation*}
		which implies
		\begin{equation*}
		\overline{U} = \sqrt{\overline{U^2}} - \frac{1}{ \measv(\Omega)}\frac{\|\delta_U\|_{L^2(\O)}^2}{\sqrt{\overline{U^2}} + \overline{U}} = \sqrt{\overline{U^2}} - R_U\|\delta_U\|_{L^2(\O)}
		\end{equation*}
		where
		\begin{equation*}
		R_U = \frac{1}{ \measv(\Omega)}\frac{\|\delta_U\|_{L^2(\O)}}{\sqrt{\overline{U^2}}+\overline{U}}.
		\end{equation*}
		We estimate
		\begin{equation*}
		|R_U|^2=\frac{1}{ \measv(\Omega)^2}\frac{\|\delta_U\|_{L^2(\O)}^2}{(\sqrt{\overline{U^2}}+\overline{U})^2} = \frac{1}{ \measv(\Omega)}\frac{{\overline{U^2}}-\overline{U}^2}{(\sqrt{\overline{U^2}}+\overline{U})^2} \leq \frac{1}{ \measv(\Omega)}.
		\end{equation*}
		Similarly we have
		\begin{equation*}
		\overline{W} = \sqrt{\overline{W^2}} - R_W\|\delta_W\|_{L^2(\GG)}, \quad \text{ and } \quad \overline{Z}= \sqrt{\overline{Z^2}} - R_Z\|\delta_Z\|_{L^2(\GG)}
		\end{equation*}
		with $0\leq R_W, R_Z \leq \frac{1}{\measb(\GG)}$. Using the mass conservation it yields 
		\begin{equation*}
		\|\delta_U\|_{L^2(\O)} \leq \sqrt{ \measv(\Omega)}\sqrt{\overline{U^2}} \leq \frac{M_1}{\sqrt{ \measv(\Omega)}}, \quad \|\delta_W\|_{L^2(\GG)}, \|\delta_W\|_{L^2(\GG)} \leq \frac{M_2}{\sqrt{\measb(\GG)}}. 
		\end{equation*}
		Now we can estimate
		\begin{align*}
		&(\overline{Z} - \overline{U}\,\overline{W})^2\\
		&= \left[(\sqrt{\overline{Z^2}} - R_Z\|\delta_Z\|_{L^2(\GG)}) - (\sqrt{\overline{U^2}} - R_U\|\delta_U\|_{L^2(\O)})(\sqrt{\overline{W^2}} - R_W\|\delta_W\|_{L^2(\GG)}) \right]^2\\
		&\geq \frac{1}{2}\left[\sqrt{\overline{Z^2}}-\sqrt{\overline{U^2}}\sqrt{\overline{W^2}}\right]^2 - C(M_1, M_2)(\|\delta_U\|_{L^2(\O)}^2 + \|\delta_W\|_{L^2(\GG)}^2 + \|\delta_Z\|_{L^2(\GG)}^2).
		\end{align*}
		Therefore, by choosing $\theta\in(0,1)$ small enough we have
		\begin{align*}
		\wt{D}[u,w,z] &\geq C(\|\delta_U\|_{L^2(\O)}^2 + \|\delta_W\|_{L^2(\GG)}^2 + \|\delta_Z\|_{L^2(\GG)}^2 + \theta|\overline{Z} - \overline{U}\,\overline{W}|^2)\\
		&\geq C\theta\left[\sqrt{\overline{Z^2}} -\sqrt{\overline{U^2}}\sqrt{\overline{W^2}}\right]^2
		\end{align*}
		which finishes the proof of {Step 3}.
		
		\medskip
		\noindent\underline{Step 4.} There exists a constant $C>0$ such that
		\begin{equation}\label{eb5}
		\begin{aligned}
		\left(\sqrt{\overline{Z^2}}-\sqrt{\overline{U^2}}\sqrt{\overline{W^2}} \right)^2\geq C\left(|\sqrt{\overline{U^2}} - U_\infty|^2 + |\sqrt{\overline{W^2}} - W_\infty|^2 + |\sqrt{\overline{Z^2}} - Z_\infty|^2\right)
		\end{aligned}
		\end{equation}
		We use the following ansatz
		\begin{equation*}
		\overline{U^2} = U_\infty^2(1+\mu_U)^2, \quad \overline{W^2} = W_\infty^2(1+\mu_W)^2,\quad \overline{Z^2} = Z_\infty^2(1+\mu_Z)^2
		\end{equation*}
		where $\mu_U, \mu_W, \mu_Z \in [-1,\infty)$. It follows that the right hand side of \eqref{eb5} is estimated by
		\begin{equation*}
		C(U_\infty^2\mu_U^2 + W_\infty^2\mu_W^2 + Z_\infty^2\mu_Z^2) \leq C(\mu_U^2+\mu_W^2 + \mu_Z^2)
		\end{equation*}
		while the left hand side of \eqref{eb5} is estimated by
		\begin{equation*}
		\left(Z_\infty(1+\mu_Z) - U_\infty W_\infty(1+\mu_U)(1+\mu_W)\right)^2 = Z_\infty^2\left[(1+\mu_Z) - (1+\mu_U)(1+\mu_W)\right]^2
		\end{equation*}
		where we have used $Z_\infty = \sqrt{\zinf} = \sqrt{\uinf\winf} = U_\infty W_\infty$. Therefore, it remains to show 
		\begin{equation}\label{eb8}
		\left[(1+\mu_Z)-(1+\mu_U)(1+\mu_W)\right]^2 \geq C(\mu_U^2 + \mu_W^2 + \mu_Z^2).
		\end{equation}
		From the conservation laws \eqref{cons1} and \eqref{cons2} we have
		\begin{equation*}
		\measv(\Omega)\overline{U^2} + \measb(\GG)\overline{Z^2} = M_1 =  \measv(\Omega)U_\infty^2 + \measb(\GG)Z_\infty^2
		\end{equation*}
		which implies
		\begin{equation*}
		\measv(\Omega)U_\infty^2(\mu_U^2 + 2\mu_U) + \measb(\GG)Z_\infty^2(\mu_Z^2 + 2\mu_Z) = 0.
		\end{equation*}
		Recalling that $\mu_U, \mu_Z\in [-1,\infty)$, it yields  $\mu_U$ and $\mu_Z$ always have different sign. Similarly, $\mu_W$ and $\mu_Z$ always have different sign, which combined with the previous argument implies that $\mu_U$ and $\mu_W$ always have the same sign. Therefore,
		\begin{align*}
		[(1+\mu_Z) - (1+\mu_U)(1+\mu_W)]^2 &= [\mu_Z - \mu_U - \mu_U(1+\mu_W)]^2\\
		&= [\mu_Z- \mu_U]^2 + 2(\mu_U^2-\mu_Z\mu_U)(1+\mu_W) + [1+\mu_W]^2\\
		&\geq [\mu_Z - \mu_U]^2 \geq \mu_Z^2 + \mu_U^2
		\end{align*}
		since $-\mu_Z\mu_U\geq 0$ and $1+\mu_W \geq 0$. Similarly we have
		\begin{equation*}
		[(1+\mu_Z) - (1+\mu_U)(1+\mu_W)]^2 \geq \mu_Z^2 + \mu_W^2.
		\end{equation*}
		Therefore,
		\begin{equation*}
		[(1+\mu_Z)-(1+\mu_U)(1+\mu_W)]^2 \geq \frac 12[\mu_U^2 + \mu_W^2 + \mu_Z^2],
		\end{equation*}
		which means that \eqref{eb8} is proved and the proof of {\bf Step 4} is finished.
	\end{proof}
	\begin{lemma}[Csisz\'ar-Kullback-Pinsker inequality]\cite{gilardoni2010pinsker}\label{classical-CKP}
		Let $\Omega\subset \R^d$ be a domain and let $f, g \in L^1(\Omega)$ such that $f\ge 0$, $g>0$, and $\intOO f(x) = \intOO g(x) = 1$. Let $\Phi \in C^1([0,\infty))$ satisfy
		\begin{equation}\label{e0}
		\Phi(s)  \ge \Phi(1) + \Phi'(1)(s-1) + \gamma^2(s-1)\mathbf{1}_{\{s\le 1 \}}
		\end{equation}
		for some $\gamma>0$. Then 
		\begin{equation*}
		\|f - g\|_{L^1(\Omega)}^2 \le \frac{4}{\gamma^2}\intOO\left(\Phi\left(\frac fg\right) - \Phi(1) \right)g.
		\end{equation*}
	\end{lemma}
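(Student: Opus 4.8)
The plan is to run the classical splitting argument behind Csisz\'ar--Kullback--Pinsker type estimates. First I would partition $\Omega$ according to the sign of $f-g$: set $\Omega_- = \{x\in\Omega : f(x)\le g(x)\}$ and $\Omega_+ = \Omega\setminus\Omega_-$. On $\Omega_-$ the pointwise ratio $s := f/g$ satisfies $s \le 1$, so the hypothesis \eqref{e0} applies with the indicator active, giving
\[
\Phi\Big(\frac fg\Big) - \Phi(1) \ge \Phi'(1)\Big(\frac fg - 1\Big) + \gamma^2\Big(\frac fg - 1\Big)^2 \quad\text{on } \Omega_-,
\]
while on $\Omega_+$ one keeps only the tangent-line bound $\Phi(s) - \Phi(1) \ge \Phi'(1)(s-1)$, which is \eqref{e0} with the indicator switched off.

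Next I would multiply these inequalities by $g > 0$, integrate over $\Omega_-$ and $\Omega_+$ respectively, and add. Using the normalisation $\intOO f = \intOO g = 1$, the linear contributions cancel, $\Phi'(1)\intOO (f-g) = 0$, and one is left with
\[
\intOO\Big(\Phi\Big(\frac fg\Big) - \Phi(1)\Big)g \;\ge\; \gamma^2 \int_{\Omega_-}\frac{(f-g)^2}{g}.
\]
It then remains to bound $\|f-g\|_{L^1(\Omega)}$ in terms of the weighted $L^2$ quantity on $\Omega_-$. Since $\intOO (f-g) = 0$ we have $\int_{\Omega_-}|f-g| = \int_{\Omega_+}|f-g|$, hence $\|f-g\|_{L^1(\Omega)} = 2\int_{\Omega_-}|f-g|$, and by Cauchy--Schwarz together with $\int_{\Omega_-} g \le \intOO g = 1$,
\[
\int_{\Omega_-}|f-g| = \int_{\Omega_-}\frac{|f-g|}{\sqrt g}\,\sqrt g \le \Big(\int_{\Omega_-}\frac{(f-g)^2}{g}\Big)^{1/2}\Big(\int_{\Omega_-}g\Big)^{1/2} \le \Big(\int_{\Omega_-}\frac{(f-g)^2}{g}\Big)^{1/2}.
\]

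Squaring the last display and combining with the previous one gives $\|f-g\|_{L^1(\Omega)}^2 \le 4\int_{\Omega_-}\frac{(f-g)^2}{g} \le \frac{4}{\gamma^2}\intOO(\Phi(f/g) - \Phi(1))g$, which is the claim. There is no genuine obstacle here; the two points that make the argument work — and which I would present as its crux — are both uses of the mass constraint: it kills the term $\Phi'(1)\intOO(f-g)$ coming from the tangent line (so the sign-indefinite region $\Omega_+$ costs nothing), and it lets one replace the full $L^1$ norm by twice its restriction to $\Omega_-$, which is exactly why a quadratic lower bound on $\{s\le 1\}$ alone suffices. Measurability of $\Omega_\pm$ and integrability of $(f-g)^2/g$ on $\Omega_-$ (where $f/g\le 1$, so the integrand is dominated by $g\in L^1(\Omega)$) are routine.
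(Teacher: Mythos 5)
The paper offers no proof of Lemma \ref{classical-CKP}; it is quoted from \cite{gilardoni2010pinsker}. Your argument is the standard self-contained derivation (split $\Omega$ by the sign of $f-g$, use the quadratic lower bound only where $f\le g$, kill the tangent-line term with $\int_\Omega(f-g)=0$, double the defect via $\|f-g\|_{L^1}=2\int_{\Omega_-}|f-g|$, and finish with Cauchy--Schwarz against $g$), and every step is correct; it even reproduces the constant $4/\gamma^2$ exactly, so as a replacement for the citation it is fine. The one thing you should have flagged explicitly: your first display invokes \eqref{e0} with the term $\gamma^2(s-1)^2\mathbf{1}_{\{s\le 1\}}$, whereas \eqref{e0} as printed has $\gamma^2(s-1)\mathbf{1}_{\{s\le 1\}}$. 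As literally written the hypothesis is vacuous on $\{s\le 1\}$ (there $s-1\le 0$, so e.g.\ $\Phi\equiv 0$ satisfies it for every $\gamma$, while the conclusion fails), so the exponent $2$ is certainly the intended reading and your proof silently corrects a typo rather than proving the stated display; a sentence acknowledging this would make the write-up airtight. (The same typo propagates into the paper's later verification that $\Phi(s)=s\log s-s+1$ satisfies \eqref{e0} with $\gamma=0.9$, which should be rechecked against the corrected hypothesis, but that is outside the lemma itself.)
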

	\begin{lemma}[Csisz\'ar-Kullback-Pinsker-type inequality]\label{CKP}
		For all functions $(u,z,w)$ satisfying the conservation laws \eqref{cons1} and \eqref{cons2}, there exists a constant $C_{CKP}$ depending only on $\measv(\O)$, $\measb(\GG)$ and $(\uinf,\zinf,\winf)$ such that
		\begin{equation*}
		E[u,w,z] \geq C_{CKP}(\|u - \uinf\|_{L^1(\O)}^2 + \|w-\winf\|_{L^1(\GG)}^2  + \|z - \zinf\|_{L^1(\GG)}^2).
		\end{equation*}
	\end{lemma}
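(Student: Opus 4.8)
The plan is to exploit the additive structure of the entropy by splitting it at the level of spatial averages. Write $\ou=\measv(\O)^{-1}\intOO u$, $\ow=\measb(\GG)^{-1}\intGG w$, $\oz=\measb(\GG)^{-1}\intGG z$, and use the identity $u\log\frac u{\uinf}=u\log\frac u{\ou}+u\log\frac{\ou}{\uinf}$ (and the analogues on $\GG$) together with $\intOO u=\measv(\O)\ou$ to obtain the exact decomposition
\[
E[u,w,z]=E_1[u,w,z]+E_2[\ou,\ow,\oz],
\]
where $E_1[u,w,z]=\intOO u\log\frac u{\ou}+\intGG w\log\frac w{\ow}+\intGG z\log\frac z{\oz}\ge 0$ measures the fluctuation of each unknown about its own average, and $E_2[\ou,\ow,\oz]=\measv(\O)\,\phi(\ou,\uinf)+\measb(\GG)\,\phi(\ow,\winf)+\measb(\GG)\,\phi(\oz,\zinf)$ with $\phi(a,b)=a\log\frac ab-a+b\ge0$. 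From the conservation laws \eqref{cons1}--\eqref{cons2} and the nonnegativity of $u,w,z$ one reads off the a priori bounds $\measv(\O)\ou=\intOO u\le M_1$, $\measb(\GG)\ow\le M_2$, $\measb(\GG)\oz\le\min\{M_1,M_2\}$, where $M_1=\measv(\O)\uinf+\measb(\GG)\zinf$ and $M_2=\measb(\GG)(\winf+\zinf)$; this is the \emph{only} place the conservation laws are used.

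For $E_1$ I would invoke the classical Csisz\'ar--Kullback--Pinsker inequality (Lemma \ref{classical-CKP}) with $\Phi(s)=s\log s-s+1$, applied to $f=u/(\measv(\O)\ou)$ and $g=1/\measv(\O)$ on $\O$ (and to the corresponding normalisations of $w,z$ on $\GG$): indeed $f\ge0$, $g>0$, $\intOO f=\intOO g=1$, and since $\Phi(1)=\Phi'(1)=0$, $\Phi''(s)=1/s\ge1$ on $(0,1]$, hypothesis \eqref{e0} holds with $\gamma^2=\tfrac12$. A short computation identifies $\intOO(\Phi(f/g)-\Phi(1))g=(\measv(\O)\ou)^{-1}\intOO u\log\frac u{\ou}$ and $\|f-g\|_{L^1(\O)}=(\measv(\O)\ou)^{-1}\|u-\ou\|_{L^1(\O)}$, which yields $\|u-\ou\|_{L^1(\O)}^2\le 8\,\measv(\O)\ou\,\intOO u\log\frac u{\ou}\le 8M_1\intOO u\log\frac u{\ou}$, and similarly on $\GG$ with constant $8M_2$. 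Summing gives $E_1[u,w,z]\ge c_1\big(\|u-\ou\|_{L^1(\O)}^2+\|w-\ow\|_{L^1(\GG)}^2+\|z-\oz\|_{L^1(\GG)}^2\big)$ with $c_1$ depending only on $\measv(\O),\measb(\GG),(\uinf,\winf,\zinf)$.

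For $E_2$, the point is that for each fixed $b>0$ the function $a\mapsto \phi(a,b)/(a-b)^2$ is continuous and strictly positive on $(0,\infty)\setminus\{b\}$ and extends continuously to $1/(2b)>0$ at $a=b$; hence it has a positive minimum on the compact interval $[0,M_1/\measv(\O)]$ (and on the corresponding intervals for $\ow,\oz$). This gives $E_2[\ou,\ow,\oz]\ge c_2\big(|\ou-\uinf|^2+|\ow-\winf|^2+|\oz-\zinf|^2\big)$. Finally, combining the two bounds with the triangle inequality $\|u-\uinf\|_{L^1(\O)}\le\|u-\ou\|_{L^1(\O)}+\measv(\O)|\ou-\uinf|$, the elementary $(x+y)^2\le2x^2+2y^2$, and the analogous estimates on $\GG$, one concludes $\|u-\uinf\|_{L^1(\O)}^2+\|w-\winf\|_{L^1(\GG)}^2+\|z-\zinf\|_{L^1(\GG)}^2\le C\,(E_1+E_2)=C\,E[u,w,z]$, which is the assertion with $C_{CKP}=1/C$.

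The only genuinely delicate point is conceptual rather than computational: the reference constants $(\uinf,\winf,\zinf)$ are not the spatial averages, so one cannot apply a CKP inequality directly; moreover the naive bound $\intOO\phi(u,\uinf)\ge C\|u-\uinf\|_{L^1(\O)}^2$ is \emph{false} without an a priori mass bound (large constant states make the ratio tend to $0$). The two-scale split is designed precisely to isolate this issue into $E_2$, where the conservation laws pin the averages into a fixed compact set and a compactness argument closes the estimate, while $E_1$ reduces to the classical result.
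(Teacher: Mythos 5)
Your proposal is correct and follows essentially the same route as the paper's proof: the same splitting of $E$ into a fluctuation part $E_1$ (handled by Lemma \ref{classical-CKP} applied to the normalised densities $u/(\measv(\O)\ou)$, etc.) and an average part $E_2$ (bounded below by $|\ou-\uinf|^2+\cdots$ using the a priori bounds on the averages coming from the conservation laws), combined via the triangle inequality. The only cosmetic difference is that the paper bounds $E_2$ with the explicit elementary inequality $x\log(x/y)-x+y\ge(\sqrt x-\sqrt y)^2$ rather than your compactness argument, and your value $\gamma^2=\tfrac12$ for hypothesis \eqref{e0} is in fact the correct one (the paper's $\gamma=0.9$ overshoots the optimal constant), which only affects the unspecified constant $C_{CKP}$.
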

	
	\begin{proof}
		We define the averages, thanks to the volume preserving assumption,
		\begin{equation*}
		\ou = \frac{1}{\measv(\O)}\intOO u , \quad \ow = \frac{1}{\measb(\GG)}\intGG w , \quad \oz = \frac{1}{\measb(\GG)}\intGG z .
		\end{equation*}
		Due to the conservation laws \eqref{cons1} and \eqref{cons2} we again have $\ou \le M_1$ and $\oz + \ow\le M_2$ for some $M_1, M_2$ depending only on $\measv(\O)$, $\measb(\GG)$ and $(\uinf,\zinf,\winf)$.
		By direct computations, we have
		\begin{equation}\label{eb1}
		\begin{aligned}
		E[u,w,z] &= \intOO u\log\frac{u}{\ou}  + \intOO u\log\frac{\ou}{\uinf} - u + \uinf \\
		&\quad + \intG w\log\frac{w}{\ow}  + \intGG w\log\frac{\ow}{\winf} - w + \winf  \\
		&\quad + \intGG z\log\frac{z}{\oz}  + \intGG z\log\frac{\oz}{\zinf} - z + \zinf  .
		\end{aligned}
		\end{equation}	
		Let $\Phi(s) = s\log s - s + 1$, it is easy to see that $\Phi$ satisfies \eqref{e0} for all $s\in [0,1]$ with $\gamma = 0.9$. We then can apply Lemma \ref{classical-CKP} to $f(x) = \measv(\O)^{-1}u(x)/\ou$ and $g(x) \equiv \measv{(\O)}^{-1}$ to obtain
		\begin{equation*}
		\measv(\Omega)^{-2}\left\|\frac{u}{\ou}-1\right\|_{L^1(\Omega)}^2 \le 
		\frac{4}{0.9^2}\measv(\Omega)^{-1}\intOO \left(\frac{u}{\ou}\log \frac{u}{\ou} - \frac{u}{\ou} + 1 \right)\ou
		\end{equation*}
		which leads to
		\begin{equation*}
		\|u - \ou\|_{L^1(\Omega)}^2 \le \frac{4\ou^2\measv(\O)}{0.9^2}\intOO u\log\frac{u}{\ou} \le \frac{4M_1^2\measv(\O)}{0.9^2}\intOO u\log\frac{u}{\ou}.
		\end{equation*}
		Similarly, we have
		\begin{equation*}
		\|z - \oz\|_{L^1(\Gamma)}^2 \le \frac{4M_2^2\measb(\GG)}{0.9^2}\intGG z\log\frac{z}{\oz} \quad \text{and} \quad \|w - \ow\|_{L^1(\Gamma)}^2 \le \frac{4M_2^2\measb(\GG)}{0.9^2}\intGG w\log\frac{w}{\ow}. 
		\end{equation*}
		By using the elementary inequality $x\log(x/y) - x + y\geq (\sqrt x - \sqrt y)^2$ we have
		\begin{align*}
		\intOO u\log\frac{\ou}{\uinf} - \ou + \uinf   &= \measv(\O)\left(\ou\log\frac{\ou}{\uinf} - \ou + \uinf\right)\\
		&\geq \measv(\O)(\sqrt{\ou} - \sqrt{\uinf})^2\\
		&= \measv(\O)\frac{|\ou - \uinf|^2}{(\sqrt{\ou} + \sqrt{\uinf})^2}\\
		&\geq \frac{1}{\measv(\O)(\sqrt{M_1} + \sqrt{\uinf})^2}\|\ou - \uinf\|_{L^1(\O)}^2
		\end{align*}
		and similarly
		\begin{equation*}
		\intGG w\log\frac{\ow}{\winf} - w + \winf   \geq \frac{1}{\measb(\GG)(\sqrt{M_2}+\sqrt{\winf})^2}\|\ow - \winf\|_{L^1(\GG)}^2
		\end{equation*}
		and
		\begin{equation*}
		\intGG z\log\frac{\oz}{\zinf} - z + \zinf   \geq \frac{1}{\measb(\GG)(\sqrt{M_2}+\sqrt{\zinf})^2}\|\oz - \zinf\|_{L^1(\GG)}^2
		\end{equation*}
		Inserting these estimates into \eqref{eb1} we get the desired result.
	\end{proof}				
	
	\subsection{Proof of Theorem \ref{thm:convergence}}
	By direct computations, the system \eqref{sys} possesses the following linearly independent conservation laws for all $t>0$
	\begin{equation}\label{conserv1}
	\intO u(x,t)  + \intG z(x,t)  = M_1:= \int_{\O_0} u_0(x)  + \int_{\GG_0} z_0(x) 
	\end{equation}
	and
	\begin{equation}\label{conserv2}
	\intG w(x,t)  + \intG z(x,t)  = M_2:= \int_{\GG_0} w_0(x)  + \int_{\GG_0} z_0(x) .
	\end{equation}
	\begin{lemma}\label{lem:equilibrium}
		Fix an initial mass $(M_1,M_2)\in (0,\infty)^2$. There exists a unique positive solution $(\uinf,\zinf,\winf)\in (0,\infty)^3$ to the algebraic system
		\begin{equation*}
		\begin{cases}
		\zinf = \uinf\winf,\\
		\measv(\Omega_0)\uinf + \measb(\Gamma_0)\zinf = M_1,\\
		\measb(\Gamma_0)\winf + \measb(\Gamma_0)\zinf = M_2.
		\end{cases}
		\end{equation*}
	\end{lemma}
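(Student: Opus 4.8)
The plan is to collapse the $3\times3$ algebraic system into a single quadratic equation in the scalar unknown $\zinf$, and then locate its roots. Set $a:=\measv(\O_0)>0$ and $b:=\measb(\GG_0)>0$. The last two equations are linear and give
\[
\uinf=\frac{M_1-b\zinf}{a},\qquad \winf=\frac{M_2-b\zinf}{b},
\]
so a solution of the full system is uniquely determined by the value of $\zinf$, and the solution set of the system is in bijection with the root set of the equation obtained by substituting these expressions into $\zinf=\uinf\winf$. After clearing denominators and a routine expansion, that equation reads
\[
P(s):=b^2s^2-b(M_1+M_2+a)\,s+M_1M_2=0.
\]
Conversely, each root $s$ of $P$ yields the triple $(\uinf,\winf,\zinf)=\big((M_1-bs)/a,\ (M_2-bs)/b,\ s\big)$, which solves all three equations. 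Hence the lemma reduces to showing that $P$ has exactly one root for which all three of these quantities are strictly positive.

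\textbf{Locating the admissible root.} Positivity of $\zinf,\uinf,\winf$ is equivalent to $s\in\big(0,\min\{M_1/b,\,M_2/b\}\big)$. I would then evaluate $P$ at the three relevant points: $P(0)=M_1M_2>0$ (using $M_1,M_2>0$), while a short computation gives $P(M_1/b)=-aM_1<0$ and $P(M_2/b)=-aM_2<0$. Since $P$ is an upward-opening parabola with $P(0)>0$ and $P<0$ at both $M_1/b$ and $M_2/b$, its smaller root $s_-$ satisfies $0<s_-<\min\{M_1/b,M_2/b\}$, whereas its larger root exceeds $\max\{M_1/b,M_2/b\}$. The larger root therefore forces $\uinf<0$ and is discarded, while $s_-$ furnishes the desired positive triple. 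This establishes existence and uniqueness simultaneously.

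\textbf{On the difficulty.} There is no serious obstacle here — in keeping with the "elementary calculations'' remark above. The only point needing (mild) care is the bookkeeping that distinguishes the two roots of $P$, i.e.\ verifying that exactly one of them lands in the open interval on which $\uinf$ and $\winf$ are both positive. An equivalent, perhaps slightly cleaner, presentation is to observe that $s\mapsto \uinf(s)\winf(s)-s$ is continuous and strictly decreasing on $\big(0,\min\{M_1/b,M_2/b\}\big)$, positive at $0^+$ and negative at the right endpoint, and apply the intermediate value theorem; I would use whichever writeup is shortest.
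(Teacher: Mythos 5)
Your proof is correct. The paper in fact omits the argument entirely, stating only that it ``follows from direct computations,'' so your write-up supplies precisely the elementary verification the authors had in mind: eliminating $\uinf$ and $\winf$ via the two linear conservation relations, reducing to the quadratic $P(s)=b^2s^2-b(M_1+M_2+a)s+M_1M_2$, and checking the signs $P(0)>0$, $P(M_1/b)=-aM_1<0$, $P(M_2/b)=-aM_2<0$ to isolate the unique root in $\bigl(0,\min\{M_1/b,M_2/b\}\bigr)$, which is exactly the interval on which all three components are positive. The computations all check out, and the observation that the larger root lies beyond $\max\{M_1/b,M_2/b\}$ (hence yields $\uinf<0$) correctly rules out the second root.
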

	\begin{proof}
		The proof of this lemma follows from direct computations so we omit it.
	\end{proof}
	We are now ready to prove the results of convergence to equilibrium for \eqref{sys}.
	\begin{proof}[Proof of Theorem \ref{thm:convergence}]
		We consider the relative entropy
		\begin{equation}\label{entropy}
		E[u,w,z] = \intO u\log\frac{u}{\uinf} - u + \uinf   + \intG w\log\frac{w}{\winf}-w+\winf   + \intG z\log\frac{z}{\zinf}-z+\zinf .
		\end{equation}
		By direct computation we obtain the entropy production
		\begin{equation}\label{en_prod}
		\begin{aligned}
		&D[u,w,z] = -\frac{d}{dt}E[u,w,z]\\
		&= \dO\intO\frac{|\na u|^2}{u}  + \dG\intG\frac{|\naG w|^2}{w}  + \dGG\intG\frac{|\naG z|^2}{z}  + \intG(z - uw)\log\frac{z}{uw} \\
		&\quad - {\color{black}\intG \left[u\log\frac{u}{\uinf}\right](\VV_p-\VV_\GG)\cdot \nu  } - \intO(\VV_\O - \VV_p)\cdot \na u   + \intO(u-\uinf)\na\cdot \VV_p \\
		&\quad -\intG \VV_\GG \cdot \naG w   - \intG \VV_\GG\cdot \naG z . 
		\end{aligned}
		\end{equation}
		By the assumption $(\VV_p - \VV_\Gamma)\cdot \nu = 0$ on $\Gamma(t)$, the fifth term on the right hand side vanishes. For the sixth and seventh terms on the right hand side we can calculate further as
		\begin{align*}
		&-\intO (\VV_\Omega - \VV_p)\cdot \na u   + \intO(u-\uinf)\na\cdot \VV_p \\
		& = -\intO \VV_\Omega\cdot \na u   + \intO \na\cdot((u-\uinf)\VV_p) \\
		&= -\intO \VV_\Omega\cdot \na u   + \intG (u-\uinf)\VV_p\cdot \nu  \\
		&= -\intO \VV_\Omega\cdot \na u   + \intG (u-\uinf)\VV_\Gamma\cdot \nu  
		\end{align*}
		where we used again the assumption $\VV_p\cdot \nu = \VV_\Gamma\cdot \nu = 0$ on $\Gamma(t)$. By Cauchy-Schwarz's inequality and the conservation of mass we can estimate
		\begin{align*}
		-\intO\VV_\O \cdot \na u   &= -\intO \VV_\O \sqrt{u}\frac{\na u}{\sqrt u} \\&\geq -C\|\VV_\O\|_{\infty}^2\intO u(t)  - \frac{\dO}{4}\intO\frac{|\na u|^2}{u} \\
		&\geq -C\|\VV_\O\|_{\infty}^2 - \frac{\dO}{4}\intO \frac{|\na u|^2}{u} ,
		\end{align*}
		and similarly
		\begin{equation*}
		-\intG\VV_\GG\cdot\naG w  \geq C\|\VV_\GG\|_\infty^2 - \frac{\dG}{2}\intG\frac{|\naG w|^2}{w} 
		\end{equation*}
		and
		\begin{equation*}
		-\intG \VV_\GG\cdot\naG z   \geq C\|\VV_\GG\|_{\infty}^2 - \frac{\dGG}{2}\intG\frac{|\naG z|^2}{z} .
		\end{equation*}
		It remains to deal with the term $\intG (u-\uinf)\VV_\Gamma\cdot \nu  $. It is obvious
		\begin{equation*}
		-\intG \uinf \VV_\Gamma\cdot \nu   \geq -\uinf |\Gamma(t)|\|\VV_\Gamma\|_{\infty} = -C\|\VV_\Gamma\|_{\infty}
		\end{equation*}
		thanks to the assumption \eqref{B}. Finally, thanks to the trace interpolation inequality 
		\begin{align*}
		\intG u \VV_\Gamma \cdot \nu   &\geq -\|\VV_\Gamma\|_{\infty}\intG |\sqrt u|^2 \\
		&\geq -C\|\VV_\Gamma\|_{\infty}\|\sqrt{u}\|_{L^2(\Omega(t))}\left(\|\na \sqrt u\|_{L^2(\Omega(t))} + \|\sqrt u\|_{L^2(\Omega(t))} \right)\\
		&\geq -\delta_\Omega\|\na \sqrt u\|_{L^2(\Omega(t))}^2  - C\|\VV_\Gamma\|_{\infty}^2\|\sqrt u\|_{L^2(\Omega(t))}^2\\
		&\geq -\frac{\delta_\Omega}{4}\intO\frac{|\na u|^2}{u}  - C\|\VV_\Gamma\|_{\infty}^2
		\end{align*}
		Inserting all these estimates into \eqref{en_prod} we have
		\begin{equation*}
		D[u,w,z] \geq \wt{D}[u,w,z] - C(\|\VV_\O\|_\infty^2 + \|\VV_\GG\|_\infty^2 + \|\VV_\Gamma\|_\infty)
		\end{equation*}
		where
		\begin{equation}\label{eq:dtilde}
		\wt{D}[u,w,z] = \frac \dO 2\intO\frac{|\na u|^2}{u}  + \frac\dG 2\intG\frac{|\naG w|^2}{w}  + \frac\dGG 2\intG\frac{|\naG z|^2}{z}  + \intG(z-uw)\log\frac{z}{uw} .
		\end{equation}
		From the entropy-entropy dissipation estimate in Proposition \ref{pro:eed}, we have
		\begin{equation*}
		\widetilde{D}[u,w,z]\ge \lambda E[u,w,z]
		\end{equation*}
		where, thanks to the assumptions \eqref{B}, \eqref{aLSI} and \eqref{aTrPW}, the constant $\lambda>0$ can be chosen \textit{independent of time $t>0$}.
		Consequently,
		\begin{equation*}
		\frac{d}{dt}E[u,w,z] +\lambda E[u,w,z] \leq C(\|\VV_\O\|_\infty^2 + \|\VV_\GG\|_\infty^2 + \|\VV_\Gamma\|_\infty).
		\end{equation*}
		Let $h(t):= \|\VV_\O(t)\|_\infty^2 + \|\VV_\GG(t)\|_\infty^2 + \|\VV_\Gamma(t)\|_\infty$, we have $\limsup_{t\to\infty}h(t) = 0$. Using Gronwall's lemma we have
		\begin{equation}\label{ea1}
		E[u,w,z](t) \leq e^{-\lambda t}E[u,w,z](0) + e^{-\lambda t}\int_0^te^{\lambda s}h(s) .
		\end{equation}
		The first part on the right hand side decays exponentially to zero. For the second part, let $\varepsilon>0$ arbitrary. There exists $T>0$ such that $h(s) \leq \varepsilon\lambda/2$ for all $s\geq T$. We can then estimate for $t>T$,
		\begin{align*}
		e^{-\lambda t}\int_0^te^{\lambda s}h(s)  &\leq e^{-\lambda (t-T)}\int_0^Th(s)  + \varepsilon e^{-\lambda t}\int_T^te^{\lambda s} \\
		&\leq e^{-\lambda (t-T)}\int_0^Th(s)  + \frac{\varepsilon\lambda}{2} e^{-\lambda t}\frac{e^{\lambda t} - e^{\lambda T}}{\lambda}\\
		&\leq e^{-\lambda (t-T)}\int_0^Th(s)  +\frac{\varepsilon}{2}.
		\end{align*}
		Since $T$ now is fixed, there exists $T_1>T$ such that
		\begin{equation*}
		e^{-\lambda(t-T)}\int_0^Th(s)  \leq \frac\varepsilon 2 \quad \text{ for all } \quad t\geq T_1.
		\end{equation*}
		Therefore, for all $t\geq T_1$,
		\begin{equation*}
		e^{-\lambda t}\int_0^te^{\lambda s}h(s)  \leq \varepsilon
		\end{equation*}
		and thus
		\begin{equation*}
		\limsup_{t\to\infty}e^{-\lambda t}\int_0^te^{\lambda s}h(s)  \leq\varepsilon
		\end{equation*}
		and since $\varepsilon>0$ arbitrary, we finally obtain
		\begin{equation*}
		\limsup_{t\to\infty}e^{-\lambda t}\int_0^te^{\lambda s}h(s)  = 0.
		\end{equation*}
		Inserting this into \eqref{ea1} we get
		\begin{equation*}
		\lim_{t\to\infty}E[u,w,z](t) = 0.
		\end{equation*}
		By the Csisz\'ar-Kullback-Pinsker in Lemma \ref{CKP} we obtain
		\begin{equation*}
		\|u(t) - \uinf\|_{L^1(\Omega(t))} + \|w(t) - \winf\|_{L^1(\GG(t))} + \|z(t) - \zinf\|_{L^1(\GG(t))} \to 0 \quad \text{ as } \quad t\to\infty.
		\end{equation*}
		If \eqref{ex_decay} holds, then it follows directly from \eqref{ea1} and Lemma \ref{CKP} that
		\begin{equation*}
		\|u(t) - \uinf\|_{L^1(\Omega(t))} + \|w(t) - \winf\|_{L^1(\GG(t))} + \|z(t) - \zinf\|_{L^1(\GG(t))} \leq Ce^{-\mu t}
		\end{equation*}
		for some $\mu >0$. 
	\end{proof}
	
	\bibliographystyle{alpha}

\end{document}